\documentclass{article}
\usepackage{amsmath, amsthm, amssymb, mathtools, leftindex, tikz-cd, verbatim, bbm}
\usepackage{enumitem}
\usepackage[parfill]{parskip}
\usepackage[a4paper, total={7in, 9in}]{geometry}

\usepackage[maxbibnames=99, backend= biber, style = alphabetic]{biblatex}
\usepackage{hyperref}

\newtheorem{lem}{Lemma}[section]

\newtheorem{prop}[lem]{Proposition}
\newtheorem{thm}[lem]{Theorem}

\newtheorem{defi}[lem]{Definition}

\newtheorem{cor}[lem]{Corollary}

\newtheoremstyle{named}{}{}{\itshape}{}{\bfseries}{.}{.5em}{\thmnote{#3}}

\theoremstyle{named}

\newcommand{\D}{\mathcal{D}}
\renewcommand{\S}{\mathcal{S}}

\newcommand{\E}{\mathcal{E}}

\newcommand{\Z}{\mathbb{Z}}

\newcommand{\A}{\mathbb{A}}
\newcommand{\Q}{\mathbb{Q}}
\newcommand{\CC}{\mathbb{C}}
\newcommand{\Hyp}{\mathbb{H}}
\newcommand{\R}{\mathbb{R}}

\newcommand{\M}{\mathfrak{M}}
\newcommand{\J}{\mathfrak{J}}

\newcommand{\N}{\mathcal{N}}
\renewcommand{\O}{\mathcal{O}}
\renewcommand{\o}{\mathfrak{o}}

\renewcommand{\P}{\mathfrak{P}}
\newcommand{\p}{\mathfrak{p}}
\newcommand{\q}{\mathfrak{q}}

\newcommand{\Pl}{\mathcal{V}}
\newcommand{\Primes}{\mathcal{V}_{\text{fin}}}

\DeclareMathOperator{\Gal}{Gal}

\DeclareMathOperator{\SL}{SL}
\DeclareMathOperator{\GL}{GL}

\DeclareMathOperator{\disc}{disc}
\DeclareMathOperator{\vol}{Vol}

\DeclareMathOperator{\reg}{reg}

\DeclareMathOperator{\Tr}{Tr}
\DeclareMathOperator{\modulo}{mod}

\DeclareMathOperator{\Ram}{Ram}

\DeclareMathOperator{\Emb}{Emb}
\DeclareMathOperator{\nrd}{nrd}
\DeclareMathOperator{\diag}{diag}

\DeclareMathOperator{\trd}{trd}

\begin{document}
\title{Chebotarev geodesic theorem: non-split case.}
\author{Alberto Acosta Reche \thanks{This work was supported by the Engineering and Physical Sciences Research Council [EP/S021590/1], via the
EPSRC Centre for Doctoral Training in Geometry and Number Theory (The London School of Geometry and Number
Theory), University College London.}}
\date{}
\maketitle

\begin{abstract}
We study the prime geodesic theorem (PGT) for congruence subgroups of indefinite quaternion orders. We prove that the geodesic analogue of the Chebotarev density theorem for these groups holds with exponent $25/36 + \varepsilon$. In particular, we deduce that the PGT holds with exponent $25/36 + \varepsilon$ for any congruence subgroup of any indefinite quaternion order over $\Q$. The idea of the proof is to reduce the problem to the split case, which has been handled previously by the author. 
\end{abstract}

\section{Introduction.}
\subsection{Background.}
Let $\{\pm I\} \subset \Gamma \subset \SL_2(\R)$ be a discrete cofinite subgroup. Let $\{P\}_{\Gamma}$ range over hyperbolic $\Gamma$-conjugacy classes and let $X \geq 1$ be a parameter, to be taken large. Let $C_\Gamma(P) = \{\gamma \in \Gamma : \gamma^{-1}P\gamma = P\}$ be the centralizer of $P$ in $\Gamma$. We let $\nu(P) := [\{\pm I\}C_\Gamma(P): \{\pm I\}\langle P\rangle]$. The \emph{prime geodesic theorem} (PGT) is concerned with the asymptotics of the sum
\begin{equation}\label{eq:defiofprimegeodesicsum}
    \Psi_{\Gamma}(X) := \sum_{\substack{\{P\}_{\Gamma}\\N(P)\leq X\\
    \Tr(P) > 2}}\frac{\log N(P)}{\nu(P)}
\end{equation}
as $X \to \infty$. The sum can be interpreted as a weighted sum of lengths of closed geodesics of the hyperbolic orbifold $\Gamma\backslash \SL_2(\R)$, hence the use of the adjective \emph{geodesic}. In analogy with the prime number theorem we have 
\begin{equation}\label{eq:weakprimegeodesic}
\Psi_\Gamma(X) \sim X \quad \text{ as }\quad X \to \infty.
\end{equation}
When $\Gamma\backslash \Hyp$ is compact this result is originally due to Huber, see \cite[Satz 9]{huberI}. For a general cofinite $\Gamma$ one can show directly from the Selberg trace formula that
\begin{equation}\label{eq:basicprimegeodesic}
\Psi_\Gamma(X) = X + \S_\Gamma(X) + O(X^{3/4}),
\end{equation}
where $\S_\Gamma(X)$ is a secondary term given explicitly in terms of the small eigenvalues of $\Gamma\backslash \Hyp$. With a slightly bigger error term, this result is due to Sarnak in \cite[Theorem 3.16]{sarnakthesis} and Hejhal in \cite[Chapter 10, Theorem 3.4]{hejhal2}. For a particularly clear exposition of the proof, see \cite[Theorem 10.5]{iwaniec}.

The error term in \eqref{eq:basicprimegeodesic} can be improved in some situations, in particular when $\Gamma = \SL_2(\Z)$. In this case there are no small eigenvaulues, so $\S_\Gamma(X) = 0$. The PGT for the modular surface is particularly appealing to number theorists because of its connection with class numbers. Indeed, Sarnak showed \cite{sarnak82} that 
\begin{equation}\label{eq:connectiontoclassnumbers}
    \Psi_{\SL_2(\Z)}(X) = 2 \sum_{2 < t \leq X^{1/2} + X^{-1/2}}\sum_{\substack{d, u\\du^2 = t^2 - 4}} h(d) \log(\varepsilon_d) = 2\sum_{2 < t \leq X^{1/2} + X^{-1/2}} \sum_{\substack{d, u\\ du^2 = t^2 - 4}}  \sqrt{d}L(1, \chi_d).
\end{equation}
In this sum $h(d)$ and $\log \varepsilon_d$ are the class number and the regulator, respectively, of the unique quadratic order of discriminant $d$, $\chi_d$ is the quadratic character (not necessarily primitive) associated to the discriminant $d$, and $L(s, \chi_d)$ is the corresponding Dirichlet series. The second equality follows from the class number formula. 

The PGT for $\SL_2(\Z)$ was studied by Iwaniec \cite{Iwaniec1984}, Luo and Sarnak \cite{luosarnak}, and Cai \cite{Cai2002}, among other mathematicians.  At the moment of writing this paper, the best estimate for the error term in \eqref{eq:basicprimegeodesic} is
\begin{equation}\label{eq:strongprimegeodesic}
\Psi_\Gamma(X)  = X + O_\varepsilon(X^{25/36 + \varepsilon}),
\end{equation}
due to Soundararajan and Young \cite{Soundararajan_2013}. In \cite{reche2026chebotarevgeodesictheoremsplit} we have showed that the same estimate holds for any congruence subgroup of $\SL_2(\Z)$. More generally, one can study the twisted sums 
\begin{equation}\label{eq:defitwistedchebyshevsum}
    \Psi_{\SL_2(\Z)}(X; f) := \sum_{\substack{\{P\}_{\SL_2(\Z)}\\N(P)\leq X\\
    \Tr(P) > 2}}\frac{\log N(P)}{\nu(P)}f(P \modulo q),
\end{equation}
where $q$ is a positive integer and $f$ is a class function on $\SL_2(\Z/q\Z)$. Sarnak explained in \cite[Section 3]{sarnakthesis} how the asymptotic study of these sums can be seen as the geodesic analogue of the Chebotarev density theorem. From \cite[Chapter 10, Theorem 3.4]{hejhal2} it follows that 
\begin{equation}\label{eq:weakchebotarevgeodesic}
    \Psi_{\SL_2(\Z)}(X; f) =  \langle f, \mathbbm{1}\rangle_{\SL_2(\Z/q\Z)} X + O_{f, \theta, \varepsilon}(X^{3/4}(\log X)^{1/2}).
\end{equation}
In this formula we use the notation
\begin{equation*}
    \langle f_1, f_2 \rangle_G  := \frac{1}{|G|}\sum_{g \in G}f_1(g)\overline{f_2(g)}
\end{equation*}
for the inner product of two functions $f_1, f_2$ on a finite group $G$. We call a result like \eqref{eq:weakchebotarevgeodesic} the \emph{Chebotarev geodesic theorem} for $\SL_2(\Z)$ (CGT). In \cite{reche2026chebotarevgeodesictheoremsplit} we have given a new estimate for the error term in the CGT for $\SL_2(\Z)$. Precisely, we have shown that  
\begin{equation}\label{eq:strongchebotarevsplit}
    \Psi_{\SL_2(\Z)}(X; f) = \langle f, \mathbbm{1}\rangle_{\SL_2(\Z/q\Z)} X + O_{f, \theta, \varepsilon}(X^{2/3 + \theta/6 +\varepsilon})
\end{equation} 
holds for $X \geq 1$, where $\theta > 0$ is an \emph{admissible exponent} whose definition we recall below.

\begin{defi}\label{defi:admissibleexponent}
We say that $\theta > 0$ is \emph{admissible} if, for any Dirichlet character $\chi_1$, there exists $A = A(\chi_1) > 0$ such that
\begin{equation*}
    |L(s; \chi_1 \chi_2)| \ll_{\chi_1, \theta} \q(\chi_2)^\theta |s|^A
\end{equation*}
holds for $\text{Re}(s) = 1/2$ and any real Dirichlet character $\chi_2$.
\end{defi}

By the work of Petrow and Young we know that the exponent $\theta = 1/6 + \varepsilon$ is admissible for any $\varepsilon > 0$, see \cite{petrowyoung20} and \cite{petrowyoung}. Thus, for any $\varepsilon > 0$ we have the unconditional bound 
\begin{equation}\label{eq:strongsplitchebotarev}
\Psi_{\SL_2(\Z)}(X; f) = \langle f, \mathbbm{1}\rangle_{\SL_2(\Z/q\Z)} X + O_{f, \varepsilon}(X^{25/36 + \varepsilon}).
\end{equation}

\subsection{Results.}
The goal of this paper is to extend the estimate \eqref{eq:strongchebotarevsplit} to indefinite quaternion algebras. We let $\D$ denote an indefinite division quaternion algebra over $\Q$, and let $\O$ be a maximal order in $\D$. Recall that the discriminant $\disc(\D)$ is the product of the primes $p$ for which $\D\otimes_p \Q_p$ is a division algebra. Since we assume $\D$ to be indefinite, we have an isomorphism $\D \otimes_\Q \R \simeq M_2(\R)$. Via this isomorphism the group $\O^1$ of unimodular units of $\O$ becomes a cocompact discrete subgroup of $\SL_2(\R)$. For a positive integer $q$, we let $\O^1(q):= (1 + q\O) \cap \O^1$ denote the principal congruence subgroup of $\O^1$ of level $q$. We consider an arbitrary congruence subgroup $\O^1(q) \subset \Gamma \subset \O^1$ of level $q$ and let $\varphi$ be a class function on $\widetilde{\Gamma} := \O^1(q)\backslash \Gamma$. In analogy with the twisted Chebyshev sums \eqref{eq:defitwistedchebyshevsum} we consider 
\begin{equation}\label{eq:chebyshevquaternion}
    \Psi_{\Gamma}(X; \varphi) := [\{\pm I\}\Gamma: \Gamma]\sum_{\substack{\{P\}_\Gamma\\
    N(P) \leq X\\
    \Tr(P) > 2}} \frac{\log N(P)}{\nu(P)}\varphi(P \modulo q).
\end{equation}

We are now ready to state our main result.

\begin{thm}\label{thm:classicalmatching}
Let $\Gamma$ be a congruence subgroup of $\O^1$ of level $q$, and let $\varphi$ be a class function on $\widetilde{\Gamma}$. Write $q' := \disc(\D) q$. Then, there exists a class function $f$ on $\SL_2(\Z/q'\Z)$ such that, for any $X \geq 1$ and any $\varepsilon > 0$, we have
\begin{equation*}
    \Psi_{\Gamma}(X; \varphi) = \Psi_{\SL_2(\Z)}(X; f) + O_{\varepsilon, \varphi}(X^{1/2 + \varepsilon}).
\end{equation*}
\end{thm}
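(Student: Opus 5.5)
The plan is to match both sides through the arithmetic of conjugacy classes, organized by trace, and to compare local data prime by prime. A hyperbolic class $\{P\}_\Gamma$ with $\Tr(P) = t > 2$ determines the quadratic order $\Z[P]$ of discriminant $t^2-4$, sitting inside the field $K_t = \Q(\sqrt{t^2-4})$. Conversely, an element of $\O^1$ of trace $t$ comes from an optimal embedding of some order $R \supseteq \Z[(t+\sqrt{t^2-4})/2]$ into $\O$. Eichler's theory of optimal embeddings then gives the count of $\O^1$-conjugacy classes of such embeddings. It equals $h(R)$ times a product of local factors, and these factors are trivial except at primes $p \mid \disc(\D)$, where they equal $1 - \left(\frac{R}{p}\right)$. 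Here we use that $\O^1$ has the same image in the normaliser modulo the relevant units, by strong approximation, since $\D$ is indefinite. Weighting by $\log N(P)/\nu(P)$ turns $h(R)$ into $h(R)\log\varepsilon_R$, exactly as in Sarnak's formula \eqref{eq:connectiontoclassnumbers}. So before twisting we expect
\begin{equation*}
\Psi_{\O^1}(X) = 2\sum_{2<t\le X^{1/2}+X^{-1/2}}\ \sum_{du^2=t^2-4} h(d)\log\varepsilon_d \prod_{p\mid\disc(\D)}\Bigl(1-\Bigl(\tfrac{d}{p}\Bigr)\Bigr),
\end{equation*}
with the convention that the local factor must be adjusted when $p$ divides the conductor.

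The key steps are as follows.

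First, we reduce the twisted sum over $\Gamma$ to a twisted sum over $\O^1$. Since $\Gamma \supseteq \O^1(q)$, $\Psi_\Gamma(X;\varphi)$ can be written as $\Psi_{\O^1}(X;\tilde\varphi)$ for the induced class function $\tilde\varphi$ on $\O^1/\O^1(q)$ (Frobenius reciprocity on conjugacy classes). The normalising factor $[\{\pm I\}\Gamma:\Gamma]$ is there to make this induction clean.

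Second, we identify $\O^1/\O^1(q)$. Strong approximation gives $\O^1/\O^1(q) \cong \prod_{p^a\| q}(\O_p^1/\O^1_p(p^a))$. For $p \nmid \disc(\D)$ the factor is $\SL_2(\Z/p^a)$. For $p\mid\disc(\D)$ it is a unit group of the local division order, which is a different group, and this is why the level is enlarged to $q'$.

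Third, we refine the Eichler count locally. The number of conjugacy classes in $\O^1$ with given trace $t$, given order $R$, and given reduction class $c$ modulo $q$ is $h(R)$ times a product over $p \mid q\disc(\D)$ of local orbit counts. These are the numbers of $\O_p^1$-orbits of optimal embeddings of $R_p$ whose image reduces to $c_p$. Each local count is a function of $t$ and of $R_p$ alone. The same structure holds for $\SL_2(\Z)$, where it is given by local orbit counts in $M_2(\Z_p)$.

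Fourth, we construct $f$. For $p\nmid \disc(\D)$ we take the local component of $\tilde\varphi$ unchanged. For $p\mid\disc(\D)$ we need a class function $f_p$ on $\SL_2(\Z/p^{b}\Z)$ whose weighted split local counts reproduce the division local counts for every $(t,R_p)$. The key point is that both local counts depend only on $t \bmod p^{b}$ and on the $p$-part of the conductor of $R$, and only boundedly far up. The elements of $\SL_2(\Z/p^b)$ with prescribed trace and "level of non-scalarity" realise every such pair. So $f_p$ can be defined by dividing the desired local density by the split local density, taking $f_p=0$ where the target vanishes (for instance, split $t^2-4$ at $p$). The resulting $f$ is a product function on $\SL_2(\Z/q'\Z)$ and is automatically a class function.

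Finally, we bound the error. The matching is exact for all classes with $t^2-4$ not divisible by too high a power of a prime dividing $q'$. The remaining contributions are pairs $(t,R)$ where the local data are not determined at bounded level, or where $\nu(P)$ and the $\{\pm I\}$ conventions disagree. These correspond to $d u^2=t^2-4$ with $u$ divisible by a large power of some $p\mid q'$. Bounding them trivially by $h(d)\log\varepsilon_d \ll d^{1/2+\varepsilon}$ and summing over $t\le X^{1/2}$ gives the $O(X^{1/2+\varepsilon})$ error. Any non-primitive or elliptic boundary terms are of the same size.

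The main obstacle is the fourth step. We must check that the division-algebra local orbit counts, refined by the reduction class modulo $p^a$, really are functions of $(t \bmod p^b, v_p(\text{conductor}))$ with bounded $b$. We must also check that the split side provides enough independent local patterns to match them. I would first work out the case $a=0$, that is, $\varphi$ trivial at $p\mid\disc(\D)$, where the target is $1-(d/p)$. Taking $f_p$ supported on elements whose reduction mod $p$ is elliptic (irreducible characteristic polynomial) or suitably scalar should realise this target directly. The general case would then follow by the same bookkeeping at higher level.
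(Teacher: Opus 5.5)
\textbf{There is a genuine gap, and it sits in your third step.} You claim that the number of $\O^1$-classes with trace $t$, order $R$ and reduction class $c$ modulo $q$ equals $h(R)$ times a product of local $\O_p^1$-orbit counts. This is false unless $\varphi$ is \emph{stable}, i.e.\ invariant under conjugation by $(\O/q\O)^\times$.

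The reason is as follows. Inside one $\O_p^\times$-orbit of local optimal embeddings, the $\O_p^1$-orbits are indexed by $\Z_p^\times/\nrd(R_p^\times)$. Which combinations of these local labels are realised by global classes is cut out by a reciprocity condition for $E_t/\Q$. So the twisted count is a stable product plus an endoscopic term, not a single product.

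You assert the same structure on the split side, and it already fails for $\SL_2(\Z)$ with $q=5$ and $t=3$. Since $h^+(\Q(\sqrt5))=1$, there is exactly one $\SL_2(\Z)$-class of trace $3$, represented by $\begin{psmallmatrix}2&1\\1&1\end{psmallmatrix}$. It therefore reduces to only one of the two $\SL_2(\mathbb{F}_5)$-classes $-\begin{psmallmatrix}1&a\\0&1\end{psmallmatrix}$, with $a$ a square or a non-square. These two classes are swapped by conjugation by $\diag(2,1)\in\GL_2(\Z_5)$, which fixes $t$ and $R_5$, so any product of local orbit counts gives them equal weight. For stable $\varphi$ your formula is correct, provided the local factors are the $B_p^\times$-orbital integrals; this is Proposition \ref{prop:expressionchebyshevprimegeodesic}, and it is the form to which local matching applies.

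\textbf{This changes where the error term comes from.} In the paper, the $O(X^{1/2+\varepsilon})$ comes entirely from instability.
\begin{itemize}
\item When $E_t$ ramifies at some prime not dividing $q$, Artin reciprocity gives $\Q^\times\nrd(E_\A^\times)\widehat{\Z}^{S,\times}=\A^\times$ (Lemma \ref{lem:subgroupequaltoideles}). This forces $\psi_{\O^1}(t;\varphi^\gamma)=\psi_{\O^1}(t;\varphi)$ (Proposition \ref{prop:stability}).
\item The remaining $t$ are traces of norm-one units in the finitely many real quadratic fields unramified outside $q$. They lie in finitely many geometric progressions, so with $\psi_{\O^1}(t)\ll t^{1+\varepsilon}$ they contribute $O(X^{1/2+\varepsilon})$.
\item After averaging $\varphi$ over $(\O/q\O)^\times$-conjugation, the identity with $\SL_2(\Z)$ is \emph{exact} for every $t$ (Proposition \ref{prop:classicalmatchingforstable}).
\end{itemize}

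Your proposed source of error cannot work. With $f$ of fixed level, ``$u$ divisible by a large power of $p$'' means a fixed power $p^k$. These $t$ carry a positive proportion of $\Psi$, since they include all $P\equiv I \bmod p^k$. A trivial bound over them gives only $O(X^{1+\varepsilon}p^{-2k})$, not $O(X^{1/2+\varepsilon})$.

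So the local matching must be exact at all depths. This includes elements arbitrarily close to the centre, where orbital integrals grow with the conductor exponent. Dividing densities on the finitely many classes modulo $p^b$ gives no reason why infinitely many such conditions can be met simultaneously. The paper obtains exactness from Proposition \ref{prop:mymatching} for $d(\alpha)<n$, together with the Tang--Wu--Yang--Yang formulas (Proposition \ref{prop:matchingtan}) for $d(\alpha)\ge n$. The latter need test functions on the Iwahori order $\J$, which is why $f$ has level $p^{n+1}$ at $p\mid\disc(\D)$.
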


When this result is combined with \eqref{eq:strongchebotarevsplit} we obtain a strong form of the CGT for quaternion congruence subgroups.

\begin{cor}\label{cor:nonsplitchebotarev}
Let the notation be as in Theorem \ref{thm:classicalmatching}. Then, for any admissible exponent $\theta > 0$ and any $\varepsilon > 0$, we have
\begin{equation*}
    \Psi_{\Gamma}(X; \varphi) = \langle \varphi, \mathbbm{1}\rangle_{\widetilde{\Gamma}} X + O_{\varphi, \theta, \varepsilon}(X^{2/3 + \theta/6 + \varepsilon}).
\end{equation*}
\end{cor}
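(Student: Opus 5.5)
The corollary follows by combining Theorem \ref{thm:classicalmatching} with the split estimate \eqref{eq:strongchebotarevsplit}. Afterwards one must identify the main-term constant $\langle f, \mathbbm{1}\rangle_{\SL_2(\Z/q'\Z)}$ with $\langle \varphi, \mathbbm{1}\rangle_{\widetilde{\Gamma}}$.

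First, Theorem \ref{thm:classicalmatching} gives a class function $f$ on $\SL_2(\Z/q'\Z)$ with $\Psi_\Gamma(X;\varphi) = \Psi_{\SL_2(\Z)}(X;f) + O(X^{1/2+\varepsilon})$. Applying \eqref{eq:strongchebotarevsplit} at level $q'$ gives
\begin{equation*}
\Psi_\Gamma(X;\varphi) = \langle f, \mathbbm{1}\rangle_{\SL_2(\Z/q'\Z)} X + O(X^{2/3+\theta/6+\varepsilon}) + O(X^{1/2+\varepsilon}).
\end{equation*}
Since $2/3 + \theta/6 > 1/2$, the second error term is absorbed into the first. It remains to show that $c_f := \langle f, \mathbbm{1}\rangle_{\SL_2(\Z/q'\Z)}$ equals $c_\varphi := \langle \varphi, \mathbbm{1}\rangle_{\widetilde{\Gamma}}$.

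I would avoid computing $c_f$ from the construction of $f$, which is not yet visible. Instead, compare with the weak Chebotarev geodesic theorem for the cocompact group $\Gamma$ itself. Decompose $\varphi$ into irreducible characters of $\widetilde{\Gamma}$, so that $\varphi = c_\varphi \mathbbm{1} + \sum_\rho a_\rho \chi_\rho$ with nontrivial $\rho$. For each nontrivial irreducible $\rho$ of $\widetilde{\Gamma} = \O^1(q)\backslash\Gamma$, the Selberg trace formula for $\Gamma$ twisted by $\rho$ (equivalently, Huber/Hejhal-type asymptotics, e.g.\ \cite[Chapter 10, Theorem 3.4]{hejhal2}) gives $\Psi_\Gamma(X;\chi_\rho) = o(X)$. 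This holds because the multiplicity of the eigenvalue $0$ of the Laplacian in the $\rho$-isotypic part equals the multiplicity of $\rho$ in the constant functions on $\O^1(q)\backslash\Hyp$, which is zero.

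For the trivial character, the untwisted prime geodesic theorem \eqref{eq:weakprimegeodesic} applies. Its correct normalization in the presence of $-I$ is handled by the factor $[\{\pm I\}\Gamma:\Gamma]$ in \eqref{eq:chebyshevquaternion}, which was inserted so that $\Psi_\Gamma(X;\mathbbm{1}) \sim X$. Summing over the decomposition yields $\Psi_\Gamma(X;\varphi) = c_\varphi X + o(X)$. Comparing with the display above and letting $X \to \infty$ forces $c_f = c_\varphi$, which proves the corollary.

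The main obstacle is this constant identification, specifically the normalization bookkeeping. If $-I \notin \Gamma$, the character $\varphi$ may not be invariant under $P \mapsto -P$, and the conjugacy classes $\{P\}_\Gamma$ with $\Tr P > 2$ pick out one sign. I would first check that the weak CGT is applied to $\{\pm I\}\Gamma$ with the induced class function, so that the factor $[\{\pm I\}\Gamma:\Gamma]$ produces exactly $c_\varphi$. If that bookkeeping gets delicate, the fallback is to trace $c_f$ through the explicit matching construction in the proof of Theorem \ref{thm:classicalmatching}.
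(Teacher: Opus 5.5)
Your main step is the paper's own argument: the corollary is stated there as an immediate consequence of inserting the $f$ from Theorem~\ref{thm:classicalmatching} into \eqref{eq:strongchebotarevsplit} and absorbing the $X^{1/2+\varepsilon}$ error. The paper never checks separately that $\langle f,\mathbbm{1}\rangle_{\SL_2(\Z/q'\Z)}=\langle\varphi,\mathbbm{1}\rangle_{\widetilde{\Gamma}}$, so you are right that this needs an argument. However, the argument you give fails exactly where you suspected.

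\emph{Where it fails.} If $q\geq 3$ then $-I\notin\O^1(q)$. So when $-I\in\Gamma$, its image in $\widetilde{\Gamma}$ is a nontrivial central involution, and there are irreducible $\rho$ with $\rho(-I)=-1$. For such $\rho$, the $\rho$-isotypic part of $L^2(\O^1(q)\backslash\Hyp)$ is zero. The weight-$0$ trace formula twisted by $\rho$ is also empty, because the terms of $P$ and $-P$ cancel. Hence the eigenvalue-$0$ multiplicity count tells you nothing about $\sum_{\Tr P>2}\chi_\rho(P)\log N(P)/\nu(P)$.

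When $-I\notin\Gamma$ the same problem appears in another form. The weight-$0$ formula only controls both signs of the trace together. Isolating $\Tr P>2$ amounts to twisting $\{\pm I\}\Gamma$ by an odd character, and your extension-by-zero function on $\{\pm I\}\Gamma$ is not even. This applies even for $\rho$ trivial; note that \eqref{eq:weakprimegeodesic} is stated only for $-I\in\Gamma$.

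\emph{Spectral fix.} The missing input is the trace formula in odd weight, or equivalently on $\Gamma\backslash\SL_2(\R)$ with all $K$-types. In weight one the Laplacian has spectrum in $[1/4,\infty)$, so odd characters contribute no $X$-term.

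\emph{Local fix.} Your fallback also works and avoids spectral theory entirely.
\begin{itemize}
\item With the factor $[\{\pm I\}\Gamma:\Gamma]$ one has exactly $\Psi_\Gamma(X;\varphi)=\Psi_{\O^1}(X;\mathrm{Ind}\,\varphi)$. Both induction and averaging over $(\O/q\O)^\times$-conjugation preserve the mean.
\item For the matching step, apply the Weyl integration formula on $\D_p^1$ and $\SL_2(\Q_p)$. The $B^\times$-orbital integrals of $\varphi_p$ and $f_p$ agree on elliptic elements, and those of $f_p$ vanish on split ones. These are exactly what enter the formula after summing over the $B^1$-classes inside each $B^\times$-class. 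This gives $\int_{\D_p^1}\varphi_p=\int_{\SL_2(\Q_p)}f_p$ for measures compatible with $\nrd$.
\item With the normalization $\vol(\O_p^\times)=\vol(\GL_2(\Z_p))=1$ of Section~\ref{sec:globalarguments}, one has $\vol(\O_p^1)=\vol(\SL_2(\Z_p))$. Hence $\langle f_p,\mathbbm{1}\rangle=\langle\varphi_p,\mathbbm{1}\rangle$.
\end{itemize}
As a sanity check, in Propositions~\ref{prop:mymatching} and~\ref{prop:matchingtan} every $f$ has the same total mass as the corresponding $\varphi$.
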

In particular, considering the constant function $\varphi = \mathbbm{1}$ and using the admissible exponent $\theta = 1/6 + \varepsilon$ we obtain unconditionally a strong form of the PGT for quaternion congruence subgroups.  

\begin{cor}\label{cor:primegeodesicquaternion}
Let $\Gamma$ be a congruence subgroup of $\O^1$. Then, for any $\varepsilon > 0$ we have
\begin{equation*}
    [\{\pm I\} \Gamma: \Gamma]\sum_{\substack{\{P\}_{\Gamma}\\N(P)\leq X\\
    \Tr(P) > 2}}\frac{\log N(P)}{\nu(P)} = X + O_{\Gamma, \varepsilon}(X^{25/36 + \varepsilon}).
\end{equation*}
\end{cor}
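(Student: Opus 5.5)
The statement to prove is Corollary~\ref{cor:primegeodesicquaternion}. It is the special case $\varphi = \mathbbm{1}$ of Corollary~\ref{cor:nonsplitchebotarev}, combined with the Petrow--Young admissible exponent. So the plan is to assume Theorem~\ref{thm:classicalmatching} and \eqref{eq:strongchebotarevsplit}, first derive Corollary~\ref{cor:nonsplitchebotarev}, and then specialise.

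\textbf{Step 1: derive Corollary~\ref{cor:nonsplitchebotarev}.}
\begin{itemize}[leftmargin=*]
\item Theorem~\ref{thm:classicalmatching} gives a class function $f$ on $\SL_2(\Z/q'\Z)$ with $\Psi_\Gamma(X;\varphi) = \Psi_{\SL_2(\Z)}(X;f) + O(X^{1/2+\varepsilon})$.
\item Applying \eqref{eq:strongchebotarevsplit} to $\Psi_{\SL_2(\Z)}(X;f)$ gives
\begin{equation*}
\Psi_\Gamma(X;\varphi) = \langle f,\mathbbm{1}\rangle_{\SL_2(\Z/q'\Z)}\, X + O_{\varphi,\theta,\varepsilon}\bigl(X^{2/3+\theta/6+\varepsilon}\bigr),
\end{equation*}
since $1/2 < 2/3$.
\item It remains to identify $\langle f,\mathbbm{1}\rangle_{\SL_2(\Z/q'\Z)}$ with $\langle \varphi,\mathbbm{1}\rangle_{\widetilde\Gamma}$. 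I would avoid unwinding the construction of $f$ and compare main terms instead.
\item The weak Chebotarev geodesic theorem for the cocompact group $\Gamma$ follows from the Selberg trace formula with the representation of $\Gamma$ induced from $\widetilde\Gamma$. It gives $\Psi_\Gamma(X;\varphi) = c\,X + o(X)$.
\item The constant $c$ is the multiplicity of the trivial representation, namely $\langle \varphi,\mathbbm{1}\rangle_{\widetilde\Gamma}$. Here the factor $[\{\pm I\}\Gamma:\Gamma]$ in \eqref{eq:chebyshevquaternion} compensates for the $\pm I$ normalisation, so that there is no extra factor.
\item Uniqueness of asymptotics then forces $\langle f,\mathbbm{1}\rangle = \langle\varphi,\mathbbm{1}\rangle_{\widetilde\Gamma}$.
\end{itemize}

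\textbf{Step 2: specialise.}
\begin{itemize}[leftmargin=*]
\item Take $\varphi = \mathbbm{1}$ on $\widetilde\Gamma$. Then $\langle \mathbbm{1},\mathbbm{1}\rangle_{\widetilde\Gamma} = 1$, and $\Psi_\Gamma(X;\mathbbm{1})$ is exactly the left-hand side of the corollary.
\item Take $\theta = 1/6+\varepsilon'$, which is admissible by Petrow--Young. Then $2/3 + \theta/6 = 2/3 + 1/36 + \varepsilon'/6 = 25/36 + \varepsilon'/6$.
\item Absorbing $\varepsilon'/6$ into $\varepsilon$ gives the stated bound $X + O_{\Gamma,\varepsilon}(X^{25/36+\varepsilon})$. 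The dependence on $\varphi$ becomes dependence on $\Gamma$ alone, since $\varphi = \mathbbm{1}$ is determined by $\Gamma$.
\end{itemize}

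The main obstacle is the main-term normalisation in Step~1: checking that the $\{\pm I\}$ bookkeeping in \eqref{eq:chebyshevquaternion} gives leading coefficient exactly $1$. If $-I\notin\Gamma$, conjugacy classes of $\Gamma$ and of $\{\pm I\}\Gamma$ differ, and so do the centraliser indices $\nu(P)$.

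To settle it I would apply Huber's theorem \eqref{eq:weakprimegeodesic} to $\{\pm I\}\Gamma$, which gives $\Psi_{\{\pm I\}\Gamma}(X)\sim X$. I would then check directly that the weighted sum over $\Gamma$ equals $[\{\pm I\}\Gamma:\Gamma]^{-1}\Psi_{\{\pm I\}\Gamma}(X)$, since only $P$ with $\Tr(P)>2$ are counted. This pins the constant to $1$ independently of the matching in Theorem~\ref{thm:classicalmatching}. The rest is routine bookkeeping of exponents.
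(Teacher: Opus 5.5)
Your Step 2 is exactly the paper's argument. The paper proves the corollary by putting $\varphi=\mathbbm{1}$ and $\theta=1/6+\varepsilon$ into Corollary \ref{cor:nonsplitchebotarev}, whose main term $\langle\varphi,\mathbbm{1}\rangle_{\widetilde{\Gamma}}X$ it takes as given, and your exponent arithmetic is right. Your extra check of the constant also works when $-I\in\Gamma$, since Huber's theorem then applies to $\Gamma$ directly.

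\textbf{The gap is the case $-I\notin\Gamma$.} There the identity you propose to ``check directly'' is false. Because $-I$ is central, $\{\pm I\}\Gamma$-conjugacy classes coincide with $\Gamma$-conjugacy classes and $\nu$ is unchanged. Hence $\Psi_{\{\pm I\}\Gamma}(X)=A(X)+B(X)$, where $A$ is the weighted sum over $\Gamma$-classes with $\Tr(P)>2$ and $B$ is the weighted sum over $\Gamma$-classes with $\Tr(P)<-2$ (via $P\mapsto -P$). Your identity amounts to $A=B$. This already fails for $\Gamma=\O^1(q)$ with $q\nmid 4$. Every element of $\O^1(q)$ has trace $\equiv 2 \pmod q$, so $A$ only involves traces $t\equiv 2$, while $B$ only involves $t\equiv -2 \pmod q$, and these residue classes are distinct.

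So Huber's theorem for $\{\pm I\}\Gamma$ only gives $A+B\sim X$, whereas the corollary asserts $2A\sim X$. Write $\mathbbm{1}_{\Gamma}=\tfrac12(\mathbbm{1}+\chi)$ on $\{\pm I\}\Gamma$, where $\chi$ is the character of $\{\pm I\}\Gamma/\Gamma$ with $\chi(-I)=-1$. Then $2A\sim X$ is precisely the statement $\Psi_{\{\pm I\}\Gamma}(X;\chi)=o(X)$. The weight-zero Selberg trace formula cannot see such a character, since it only sees sums over $|\Tr(P)|>2$. Your remark in Step~1 that the factor $[\{\pm I\}\Gamma:\Gamma]$ simply ``compensates'' is therefore the genuinely nontrivial input, not bookkeeping.

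There are two ways to close the gap:
\begin{itemize}
\item Quote Corollary \ref{cor:nonsplitchebotarev} together with its main term, as the paper does. Note that \eqref{eq:strongchebotarevsplit} is stated for arbitrary class functions on $\SL_2(\Z/q'\Z)$, including those with $f(-g)\neq f(g)$.
\item Prove $\Psi_{\{\pm I\}\Gamma}(X;\chi)=o(X)$ yourself, using the weight-one Selberg trace formula with multiplier $\chi$, as in Hejhal's treatment of general weights and multiplier systems. The weight-one Laplacian has spectrum in $[1/4,\infty)$, so it produces no term of size $X$.
\end{itemize}
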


Thus, we are able to extend the current strongest form of the PGT for $\SL_2(\Z)$ to a much wider class of groups. Corollary \ref{cor:primegeodesicquaternion} should be compared with \cite[Theorem 1.1]{tang2025}, which applies only to principal congruence subgroups of $\O^1$ and has exponent $7/10 + \varepsilon$ in the error term.

\subsection{Summary of the proof of Theorem \ref{thm:classicalmatching}.}
As in the proof of \cite[Theorem 1.1]{tang2025}, the main idea
in our proof of Theorem \ref{thm:classicalmatching} is to translate the problem into one of matching certain orbital integrals. Before going any further, we will introduce some notation. Let $B$ be an indefinite quaternion algebra over $\Q$, and let $\O$ be a maximal order in $B$. We let $\widehat{B}^\times$ and $\widehat{B}^1$ denote the groups of finite adeles over $\Q$ corresponding to $B^\times$ and $B^1$, respectively. The groups $\widehat{B}^\times$ and $\widehat{B}^1$ have maximal compact subgroups given by $\widehat{\O}^\times$ and $\widehat{\O}^1$, respectively. For each positive integer $q$, we let $\widehat{\O}^\times(q)$ and $\widehat{\O}^1(q)$ denote the principal congruence subgroups of level $q$ of $\widehat{\O}^\times$ and $\widehat{\O}^1$, respectively. 

By \cite[Lemma 2.1]{reche2026chebotarevgeodesictheoremsplit}, we can assume that $\Gamma = \O^1$ in  
Theorem \ref{thm:classicalmatching}. By strong approximation for $\O^1$ we can identify $(\O/q\O)^1$ with $\widehat{\O}^1(q) \backslash \widehat{\O}^1$. Thus, there is a natural bijection between functions $\varphi$ on $(\O/q\O)^1$ and functions $\widehat{\varphi}$ on $\widehat{\O}^1$ of level $q$, that is, functions which are invariant under translation by elements of $\widehat{\O}^1(q)$.
\begin{defi}\label{defi:stablefunction}
We say that a function $\varphi$ on $(\O/q\O)^1$ is \emph{stable} if it is invariant under conjugation by $(\O/q\O)^\times$. 
\end{defi}
Equivalently, $\varphi$ is stable if and only if $\widehat{\varphi}$ is invariant under conjugation by $\widehat{\O}^\times$. Under the assumption that $\varphi$ is stable we have the following generalization of identity \eqref{eq:connectiontoclassnumbers} 
\begin{equation}\label{eq:approxformula}
    \Psi_{\O^1}(X; \varphi) = \sum_{2 < t \leq X^{1/2} + X^{-1/2}}     2\reg(E_t) h(E_t) \vol_{\widehat{E_t}^\times}(\widehat{\o_{E_t}}^\times)\O_{\widehat{B}^\times}(\widehat{\varphi}; E_t, x_t),
\end{equation}
see Proposition \ref{prop:expressionchebyshevprimegeodesic} below. In this equation $E_t$ is the quadratic field $E_t := \Q[X]/(X^2 - tX + 1)$, its ring of integers is denoted by $\o_t$, its regulator by $\reg(E_t)$, its class number by $h(E_t)$, the element $x_t \in \o_t$ is defined as $x_t := X \modulo (X^2 - tX + 1)$, and $\O_{\widehat{B}^\times}(\widehat{\varphi}; E_t, x_t)$ is an orbital integral over finite adeles defined in equation \eqref{eq:defifinitepartorbitalintegral}.

Suppose now that $B = \D$ is an indefinite quaternion division algebra over $\Q$. From Proposition \ref{prop:explicitmatching} it follows that, given a function $\widehat{\varphi}$ on $\widehat{\O}^1$ of level $q$, we can find a stable function $\widehat{f}$ on $\SL_2(\widehat{\Z})$ of level $q' :=\disc(\D) q$ such that 
\begin{equation}\label{eq:matchingoforbitalintegralsintro}
    \O_{\widehat{\D}^\times}(\widehat{\varphi}; E_t, x_t) = \O_{\GL_2(\widehat{\Q})}(\widehat{f}; E_t, x_t)
\end{equation}
for any $t > 2$, see Section \ref{subsec:proofofclassicalmatching} for more details. Thus, if we use \eqref{eq:approxformula} for $B = \D$, then use \eqref{eq:matchingoforbitalintegralsintro}, and finally use \eqref{eq:approxformula} again, but this time for $B = M_2(\Q)$, we obtain the following result.

\begin{prop}\label{prop:classicalmatchingforstable}
Let $\D$ be an indefinite quaternion division algebra over $\Q$, and let $\O$ denote a maximal order in $\D$. Let $\varphi$ be a stable class function on $(\O/q\O)^1$, and write $q' := \disc(\D) q$. Then, there exists a stable class function $f$ on $\SL_2(\Z/q'\Z)$ such that, for any $X \geq 1$, we have
\begin{equation*}
    \Psi_{\O^1}(X; \varphi) = \Psi_{\SL_2(\Z)}(X; f).
\end{equation*}
\end{prop}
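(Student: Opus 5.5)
The argument follows the three-step chain outlined in the summary above.

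First, apply the generalized class-number identity \eqref{eq:approxformula} (Proposition \ref{prop:expressionchebyshevprimegeodesic}) with $B=\D$ to the stable function $\varphi$. This writes
\begin{equation*}
\Psi_{\O^1}(X;\varphi)=\sum_{2<t\leq X^{1/2}+X^{-1/2}} 2\reg(E_t)h(E_t)\vol_{\widehat{E_t}^\times}(\widehat{\o_{E_t}}^\times)\,\O_{\widehat{\D}^\times}(\widehat{\varphi};E_t,x_t).
\end{equation*}
Here stability is exactly the hypothesis under which that proposition applies.

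Second, invoke Proposition \ref{prop:explicitmatching} to produce a stable function $\widehat{f}$ on $\SL_2(\widehat{\Z})$ of level $q'=\disc(\D)q$ satisfying \eqref{eq:matchingoforbitalintegralsintro} for every integer $t>2$. The function $f$ on $\SL_2(\Z/q'\Z)$ is then the one corresponding to $\widehat{f}$ under strong approximation, $\SL_2(\Z/q'\Z)\cong \widehat{\O}^1(q')\backslash\SL_2(\widehat\Z)$ for $\O=M_2(\Z)$.

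Two compatibility points need checking.
\begin{enumerate}
\item $f$ must be a class function that is stable. Invariance of $\widehat f$ under conjugation by $\GL_2(\widehat\Z)$ gives stability, and stability in particular implies invariance under $\SL_2$-conjugation.
\item The arithmetic factors $2\reg(E_t)h(E_t)\vol(\widehat{\o_{E_t}}^\times)$ must be identical on both sides. They depend only on $E_t$, not on $B$. One must make sure the Haar measure normalizations on $\widehat{E_t}^\times$ and on the quotients defining the orbital integrals are the same for $B=\D$ and $B=M_2(\Q)$, and that the same $x_t$ is used.
\end{enumerate}
For $\D$ one also needs every $E_t$ that contributes a nonzero term to embed in $\D$. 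When it does not, the orbital integral is zero (no $\D^\times$-conjugacy class with that characteristic polynomial exists). The matching must then force the $\GL_2$ side to vanish, which should be built into Proposition \ref{prop:explicitmatching}: at primes $p\mid\disc(\D)$ nonsplit in $E_t$ is the only embedding condition.

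Third, substitute the matching into the sum term by term. Then apply \eqref{eq:approxformula} again, now with $B=M_2(\Q)$, $\O=M_2(\Z)$ and the stable function $f$. The resulting sum is exactly $\Psi_{\SL_2(\Z)}(X;f)$, with the normalizing index $[\{\pm I\}\Gamma:\Gamma]$ handled identically on both sides since $-I$ lies in both groups. Because the identity is exact term by term in $t$, no error term arises and the equality holds for every $X\geq 1$.

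The main obstacle is the second step: bookkeeping that Proposition \ref{prop:explicitmatching}, stated locally, assembles into a global $\widehat f$ of level exactly $q'$ with matching valid for all $t$ simultaneously. This means factoring $\widehat\varphi$ and the orbital integrals as products over primes, using that $\O_p\cong M_2(\Z_p)$ for $p\nmid\disc(\D)$ so that $f_p=\varphi_p$ there, and using the explicit local matching at $p\mid\disc(\D)$. Since $\varphi$ need not factor as a product, I would first reduce by linearity to pure tensors $\widehat\varphi=\otimes_p\varphi_p$ over the finitely many relevant primes, which span all level-$q$ stable functions.
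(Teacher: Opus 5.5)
Your proposal is correct and follows the paper's proof essentially verbatim. The paper also reduces by linearity to factorizable $\varphi$ (with locally stable factors, which span the stable functions). It takes $f_p=\varphi_p\circ r_p^{-1}$ at $p\nmid\disc(\D)$ and the matching function of Proposition \ref{prop:explicitmatching} at $p\mid\disc(\D)$, then applies Proposition \ref{prop:expressionchebyshevprimegeodesic} to both $\D$ and $M_2(\Q)$ term by term in $t$. Your concern about $E_t$ failing to embed in $\D$ is handled exactly as you guessed: Definition \ref{defi:matchingorbital} forces $\O_{\GL_2(\Q_p)}(f_p;E_{t,p},x_t)=0$ whenever $E_{t,p}$ is split at a ramified prime $p$.
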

The details of the proof of Proposition \ref{prop:classicalmatchingforstable} are given in Section \ref{subsec:proofofclassicalmatching}. One question still remains: how to remove the assumption that $\varphi$ is stable in the previous arguments? This question is answered by the following result, which will be proved in Section \ref{subsec:proofofalmoststability}.
\begin{prop}\label{prop:almoststabilityclassical}
Let $\varphi$ be a class function on $(\O/q\O)^1$ and let $\gamma \in (\O/q\O)^\times$. Define $\varphi^\gamma$ by the formula $\varphi^\gamma(x) := \varphi(\gamma x \gamma^{-1})$. Then, for any $\varepsilon > 0$, we have 
\begin{equation*}
    \Psi_{\O^1}(X; \varphi) = \Psi_{\O^1}(X; \varphi^\gamma) + O_{\varepsilon, \varphi}(X^{1/2 + \varepsilon}).
\end{equation*}
\end{prop}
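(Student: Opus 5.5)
The plan is to reduce the proposition to a comparison of class-group data attached to each trace $t$, and then to show that the discrepancy produced by $\gamma$ is a sum of $L$-values twisted by non-trivial characters, whose average over $t$ has no main term. Write $\phi := \varphi - \varphi^\gamma$; by linearity it suffices to show $\Psi_{\O^1}(X;\phi) = O_{\varepsilon,\varphi}(X^{1/2+\varepsilon})$. Throughout, $\varphi$ is a class function, so it is already invariant under conjugation by $(\O/q\O)^1$, which is the image of $\O^1$ by strong approximation. Since the reduced norm $(\O/q\O)^\times \to (\Z/q\Z)^\times$ is surjective with kernel $(\O/q\O)^1$, the obstruction to stability is a quotient of $(\Z/q\Z)^\times$. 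Hence $\phi$ only depends on how $\gamma$ acts modulo $(\O/q\O)^1$-conjugation, i.e.\ on the class of $\nrd(\gamma)$ in $(\Z/q\Z)^\times$.

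\textbf{Step 1: parametrize hyperbolic classes of fixed trace.} For $t>2$ I would fix the hyperbolic $\O^1$-conjugacy classes of trace $t$. As in the derivation of \eqref{eq:approxformula} (Proposition \ref{prop:expressionchebyshevprimegeodesic}), these correspond to optimal embeddings of quadratic orders $R \supseteq \Z[x_t]$ of $E_t$ into $\O$, taken modulo $\O^1$. Using the adelic description, the classes lying in a fixed $\widehat{\O}^\times$-conjugacy class are a principal homogeneous space for a narrow-type Picard group $\mathrm{Pic}^+(R)$, modulo the image of $\widehat{\O}^\times$-stabilizers. Conjugation by a lift of $\gamma$ to $\widehat{\O}^\times$ then acts on this set. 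I would check that this action is translation by the class of an invertible $R$-ideal $\mathfrak a$ whose norm is congruent to $\nrd(\gamma)$ modulo $q$, at least up to the $\O^1$-ambiguity. Granting this, Fourier expansion on the finite abelian group $\mathrm{Pic}^+(R)$ gives
\begin{equation*}
\sum_{\{P\}:\,\Tr P=t}\frac{\log N(P)}{\nu(P)}\,\phi(P)=\sum_{R}\ \sum_{\chi\neq 1}c_{\chi}(t,R)\,\reg(R)\,h^+(R)\cdot(\text{local data}),
\end{equation*}
where $\chi$ runs over characters of $\mathrm{Pic}^+(R)$ of the form $\chi(\mathfrak b)=\psi(N\mathfrak b)$, with $\psi$ a Dirichlet character mod $q$ satisfying $\psi(\nrd\gamma)\neq 1$. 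The coefficients $c_\chi(t,R)$ are bounded in terms of $\varphi$ and depend on $t$ only through congruence classes modulo a fixed power of $q\disc(\D)$. The trivial character does not appear, because it contributes identically to $\varphi$ and $\varphi^\gamma$.

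\textbf{Step 2: express the twisted class-group sums as $L$-values.} For a norm-type character, the identity $\sum_{\mathfrak b}\chi(\mathfrak b)\cdots$ over ideal classes turns $h^+(R)\reg(R)$ into
\begin{equation*}
\sqrt{d}\,L(1,\psi)\,L(1,\psi\chi_d)
\end{equation*}
up to local Euler factors at the conductor of $R$ and at primes dividing $q$. Summing over $R$, i.e.\ over $u$ with $du^2=t^2-4$, gives an expression analogous to \eqref{eq:connectiontoclassnumbers}, in which each $L(1,\chi_d)$ is replaced by $L(1,\psi)L(1,\psi\chi_d)$ times a periodic weight. It then remains to show that
\begin{equation*}
\sum_{2<t\le T} w(t)\sum_{du^2=t^2-4}\sqrt d\,L(1,\psi\chi_d)\ \ll_\varepsilon\ T^{1+\varepsilon}, \qquad T=X^{1/2},
\end{equation*}
for a periodic weight $w$ and a non-trivial $\psi$. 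I would open $L(1,\psi\chi_d)=\sum_n\psi(n)\chi_d(n)/n$, truncated at $n\le T^{A}$ with a smoothing, or handled via the approximate functional equation and Pólya--Vinogradov for the tail. I would then swap the order of summation and evaluate $\sum_t w(t)\chi_{(t^2-4)/u^2}(n)$ over residues modulo $n$ times the period of $w$.

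\textbf{Main obstacle.} The hard step is showing that the would-be main term $T\sum_n\psi(n)\rho_w(n)/n$ vanishes, where $\rho_w(n)$ is the density of the inner character sum. My approach would be to use the fact that the whole expression came from $\phi=\varphi-\varphi^\gamma$. The main term equals an average of the finite-adelic orbital integrals $\O(\widehat\phi;E_t,x_t)$ over $t$, weighted by the local densities. I would show it equals the integral of $\widehat\phi$ over $\widehat{\O}^1$ against the Plancherel-type measure, which is $\langle\phi,\mathbbm 1\rangle=0$, since $\varphi$ and $\varphi^\gamma$ have the same mean. This is the same mechanism that produces $\langle f,\mathbbm 1\rangle X$ in \eqref{eq:strongchebotarevsplit}. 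If that identification goes through, the remaining terms are off-diagonal character sums, which give $O(T^{1+\varepsilon})=O(X^{1/2+\varepsilon})$ by the standard divisor-type bound on the number of pairs $(d,u)$. This proves the proposition.
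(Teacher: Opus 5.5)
There is a genuine gap, and it lies in Step 1. For a fixed trace $t$, the quantity $\psi_{\O^1}(t;\varphi^g)$ with $g\in\widehat{\O}^\times$ depends on $g$ only through $\nrd(g)$ modulo $\Q^\times\nrd(E_{t,\A}^\times)\,\nrd(M)$, where $M$ is the conjugation-stabilizer of $\varphi_\A$. By Artin reciprocity, $\A^\times/\Q^\times\nrd(E_{t,\A}^\times)$ has order $2$. So the only character that can detect non-stability is the quadratic character $\chi_{E_t}$, not arbitrary Dirichlet characters $\psi$ mod $q$ with $\psi(\nrd\gamma)\neq 1$.

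Moreover $M\supseteq\widehat{\O}^{S,\times}$ with $S=\{\infty\}\cup\Primes(q)$. If $E_t$ ramifies at some $p\nmid q$, then $\Z_p^\times\not\subset\nrd(E_{t,p}^\times)$, the obstruction group collapses, and $\psi_{\O^1}(t;\varphi)=\psi_{\O^1}(t;\varphi^\gamma)$ \emph{exactly}. This is the paper's Proposition~\ref{prop:stability} and Corollary~\ref{cor:almoststabilityclassical}, and it is the idea you are missing.

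Your Fourier set-up is also inconsistent on its own terms. Suppose conjugation by $\gamma$ really permuted the classes of trace $t$ by translation by an ideal class $\mathfrak a$. Then $\sum_c\phi(P_c)=\sum_c\varphi(P_c)-\sum_c\varphi(P_{\mathfrak a c})=0$ identically. Nontrivial characters of $\mathrm{Pic}^+(R)$ also sum to zero over the group. So no weights of the form $\sqrt d\,L(1,\psi)L(1,\psi\chi_d)$ can arise this way. A discrepancy is possible only when $E_t$ is unramified outside $q$ and $\gamma$ does not act by such a translation.

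Even granting Steps 1 and 2, your closing estimate fails. You need
$\sum_{t\le T}w(t)\sum_{du^2=t^2-4}\sqrt d\,L(1,\psi\chi_d)\ll T^{1+\varepsilon}$, which is square-root cancellation against the trivial bound $T^{2+\varepsilon}$. That is a twisted prime geodesic theorem with exponent $1/2$, which is open. It does not follow from opening $L(1,\psi\chi_d)$ and counting pairs $(d,u)$; such methods give exponents like $25/36$ or $2/3+\theta/6$. The vanishing of the would-be main term is also only asserted.

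Once exact stability holds off an exceptional set, no cancellation is needed:
\begin{itemize}
\item The remaining $t$ have $E_t$ in the finite set $\E(q)$ of real quadratic fields unramified outside $q$.
\item Each such $t$ is the trace of a power of a fundamental unit, so there are only $O_q(\log X)$ of them with $N(t)\le X$.
\item The trivial bound $\psi_{\O^1}(t)\ll_\varepsilon t^{1+\varepsilon}$ comes from Proposition~\ref{prop:classicalmatchingforstable} with $\varphi=\mathbbm 1$ together with Iwaniec's short-interval bound.
\end{itemize}
Together these give the error $O(X^{1/2+\varepsilon})$.
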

Thus, in the proof of Theorem \ref{thm:classicalmatching}, after averaging over $(\O/q\O)^\times$ we can assume that $\varphi$ is stable, at the cost of a small error term. Applying Proposition \ref{prop:classicalmatchingforstable} to the resulting stable function completes the proof of Theorem \ref{thm:classicalmatching}.

\subsection{Remarks on explicit matching of orbital integrals.}
The matching of orbital integrals from $\D^\times$ to $\GL_2$ is an important step in the proof of the Jacquet--Langlands correspondence, see for example \cite[Section 8]{corvallisgelbartjacquet}. Even though several versions of this matching are available in the literature, we could not find a version that suited our purposes. Our version of the matching has the additional feature that it keeps track of the support and the level of $\widehat{f}$ as a function of $\widehat{\varphi}$, as opposed to previous versions such as \cite[Lemma 8.10]{corvallisgelbartjacquet}, which do not retain this information.


In \cite{tang2025}, the authors proved Proposition \ref{prop:classicalmatchingforstable} for the particular case when $\varphi$ is the characteristic function of a principal congruence subgroup. Our Proposition \ref{prop:mymatching} below complements their work by treating different test functions. Combining our work with that of \cite{tang2025} leads to a completely explicit version of the matching of local orbital integrals in the case of inner forms of $\GL_2$. 

Proposition \ref{prop:almoststabilityclassical} is of a very different nature than Proposition \ref{prop:classicalmatchingforstable}. It is closely related to the stabilization of the elliptic contribution to the Arthur trace formula for $B^1$, see \cite{Labesse_Langlands_1979}. However, our proof of Proposition \ref{prop:almoststabilityclassical} in Section \ref{subsec:proofofalmoststability} does not appeal to any background results on the stable trace formula. Instead, we proceed ``by hand" using only Artin reciprocity for quadratic field extensions. 


\subsection{Notation.}
\begin{itemize}
    \item Let $G$ be a group and $\varphi: G \rightarrow Y$ a function. For $g, x \in G$ we define $\varphi^g(x) := \varphi(g^{-1}xg)$.  
    \item Let $G$ be a group and $\iota: X \rightarrow G$ a function. For $g \in G$ and $x \in X$ we define $\prescript{g}{}\iota(x):= g\iota(x)g^{-1}$. 
    \item Let $A, B, C$ be subsets (not necessarily subgroups) of an abelian group $X$ with addition map $f: X \times X \rightarrow X$. We write $C = A \oplus B$ when $f|_{A \times B}$ is injective and $C = f(A \times B)$.
\end{itemize}

\subsection{Acknowledgements.}
The author would like to express his gratitude to his advisor Ian Petrow for helpful discussions and for carefully reading the present work. The author would also like to thank Yiannis Petridis for suggesting the problem to the author and for his guidance in the initial stages of the project.

\section{Local orbital integrals.}\label{sec:localarguments}
\subsection{Background and statements.}
Much of the notation in this section is taken from \cite{tang2025}. For a very thorough reference for quaternion algebras and their arithmetic we refer to Voight's book \cite{voight}.

In this section $F$ is a nonarchimedean local field of characteristic zero, $\o$ is its ring of integers, $\p$ is the maximal ideal of $F$, $k(\p) := \o/\p$ is the residue field of $\o$, we let $\varpi \in \p$ be a uniformizer of $F$ and $v: F \rightarrow \Z \cup \{\infty\}$ denotes the valuation of $F$ normalized so that $v(\varpi) = 1$, with the convention that $v(0) = \infty$. Furthermore, we let $q := \# k(\p)$, so that $k(\p) \simeq \mathbb{F}_q$. Whenever we consider an order in an $F$-algebra, it will be an $\o$-order.

The letter $E$ will denote a separable $F$-algebra of degree two. This means that, either $E \simeq F \times F$, or else $E/F$ is a field extension of degree two. We let $\o_E$ denote the maximal order of $E$, and let $\p_E$ denote the Jacobson ideal of $\o_E$. If $E \simeq F \times F$, then $\o_E = \o \oplus \o$ and $\p_E = \p \oplus \p$. Otherwise, if $E$ is a field, then $\o_E$ is the ring of integers of $E$ and $\p_E$ is its maximal ideal.

We let $\trd_E, \nrd_E : E \rightarrow F$ denote the usual trace and norm maps, respectively. We know that the characteristic equation
\begin{equation*}
    x^2 - \trd_E(x)x + \nrd_E(x) = 0
\end{equation*}
holds for all $x \in E$. The trace map $\trd_E$ is $F$-linear, while $\nrd_E$ is multiplicative.

Recall there is only one isomorphism class of division quaternion algebras over $F$ \cite[Theorem 13.3.11]{voight}. We let $\D$ denote a representative of this isomorphism class. The letter $B$ will denote a quaternion algebra over $F$, so that either $B \simeq M_2(F)$ or else $B \simeq \D$. We equip $B^\times$ with a nontrivial Haar measure. Note that $B^\times$ is unimodular, so that the notions of right-invariant and left-invariant Haar measure agree. 

We let $\Emb(E; B)$ denote the set of $F$-algebra embeddings of $E$ into $B$. The quaternion algebra $B$ comes equipped with maps $\trd_B, \nrd_B: B\rightarrow F$ called \emph{reduced trace} and \emph{reduced norm}, respectively. They are characterized by the following property: for any embedding $\iota \in \Emb(E; B)$ and any $x \in E$ we have $\trd_B(\iota(x)) = \trd_E(x)$ and $\nrd_B(\iota(x)) = \nrd_E(x)$. When there is no risk of confusion we will drop the subscripts and simply write $\trd$ and $\nrd$. We deduce that the characteristic equation 
\begin{equation*}
    x^2 - \trd_B(x)x + \nrd_B(x) = 0
\end{equation*}
holds for all $x \in B$. Both maps $\trd_B$ and $\nrd_B$ are surjective. The map $\trd_B$ is $F$-linear, while $\nrd_B$ is multiplicative, and we have $B^\times = \nrd_B^{-1}(F^\times)$. In the case when $B = M_2(F)$ then $\trd_B$ and $\nrd_B$ are just the trace and the determinant maps, respectively.

Recall that each $E$ and each $B$ comes equipped with a \emph{standard involution}, denoted by $x\mapsto \overline{x}$. It is given explicitly by $\overline{x} = \trd(x) - x$, and we have $\overline{xy} = \overline{y}\overline{x}$ as well as $\nrd(x) = x\overline{x}$. Note that if $\iota \in \Emb(E; B)$ and $x \in E$, then $\overline{\iota(x)} = \iota(\overline{x})$, so there is no risk of confusion with respect to the choice of ambient algebra where $x$ lives. For example, when $E$ is a field, then $x \mapsto \overline{x}$ is the nontrivial Galois automorphism of the extension $E/F$. 

We give a name to the elements of $B$ which generate separable $F$-algebras of degree two.
\begin{defi}\label{defi:regularelements}
We say that $x \in B$ is \emph{regular} if $F[x]\subset B$ is a separable $F$-algebra of degree two. We let $B_{\reg}$ denote the set of regular elements of $B$.
\end{defi}
Clearly, $B_{\reg} = \{x \in B: \trd(x)^2 - 4\nrd(x) \neq 0\}$. For example, when $B = \D$ we have $\D_{\reg} = \D \smallsetminus F$. 

Suppose we are given an embedding $\iota \in \Emb(E; B)$. Thanks to this embedding we can translate a Haar measure from $E^\times$ to $\iota(E^\times)$. Note that $\iota(E^\times)$ and $B^\times$ are both unimodular. We equip $\iota(E^\times)\backslash B^\times$ with the unique measure which is compatible with the Haar measures on $\iota(E^\times)$ and $B^\times$ and which is invariant under right translation by elements of $B^\times$ \cite[Ch. VII, §2, No. 6, Cor. 2]{bourbakiintegrationii}.

We say that a function $f$ on $B^\times$ is \emph{admissible} if, for any $E$ as above, any $\iota \in \Emb(E; B)$ and any $x \in E^\times \smallsetminus F^\times$ the function $g \mapsto f(g^{-1}\iota(x)g)$ lies in $L^1(\iota(E)^\times\backslash B^\times)$. For example, if $f \in C_c^\infty(B^\times)$ or $f \in C_c^\infty(B^1)$, then $f$ is admissible, and these are the only two examples that we will be concerned with. For an admissible function $f$ we can define the orbital integral
\begin{equation}\label{eq:defioforbitalintegral}
    \O_{B^\times}(f; E, x) := \int_{\iota(E^\times)\backslash B^\times}f(g^{-1}\iota(x)g)\, dg.
\end{equation} 
If $\Emb(E; B) = \varnothing$, then we agree by convention that $\O_{B^\times}(f; E, x) = 0$. We know that $\Emb(E; B) = \varnothing$ occurs precisely when $E \simeq F \times F$ and $B \simeq \D$, see Lemma \ref{lem:optimalembeddingsKO}. By the Noether-Skolem theorem \cite[Theorem 7.7.1]{voight} any two embeddings $\iota_1, \iota_2 \in \Emb(E; B)$ are conjugate by $B^\times$, so the integral in \eqref{eq:defioforbitalintegral} is independent of the choice of embedding $\iota$. 

If $a \in F$, we let $[+a]f$ denote the function defined by the identity $[+a]f(x) = f(x- a)$. Similarly, if $b \in F^\times$ we let $[\times b]f$ be defined by the identity $[\times b]f(x) = f(b^{-1}x)$. In particular, if $A \subset B$ is any subset we see that
\begin{equation}\label{eq:translationdilationsets}
    [+a]\mathbbm{1}_{A} = \mathbbm{1}_{a + A},\quad [\times b]\mathbbm{1}_{A} = \mathbbm{1}_{bA}.
\end{equation}
These definitions are compatible with the orbital integrals, since we have
\begin{equation}\label{eq:transformationorbital}
    \O_{B^\times}([+a]f; E, x + a) = \O_{B^\times}(f; E, x), \quad \O_{B^\times}([\times b]f; E, bx) = \O_{B^\times}(f; E, x).
\end{equation}

In this section we want to compare orbital integrals for the two isomorphism classes of quaternion algebras.

\begin{defi}\label{defi:matchingorbital}
Let $\varphi$ and $f$ be admissible functions on $\D^\times$ and $\GL_2(F)$, respectively. We say that $\varphi$ and $f$ are \emph{matching functions}, and denote this situation by $\varphi \leftrightarrow f$, if we have $\O_{\D^\times}(\varphi; E, x) = \O_{\GL_2(F)}(f; E, x)$ for any $E$ and any $x \in E^\times \smallsetminus F$. 
\end{defi}
Recall that we have agreed that the orbital integral vanishes if there are no embeddings of $E$ in $B$. In more explicit terms, $\varphi \leftrightarrow f$ means the following:
\begin{itemize}
    \item When $E \simeq F \times F$ and $x \in E^\times \smallsetminus F$ we have $\O_{\GL_2(F)}(f; E, x) = 0$.
    \item For any quadratic field extension $E/F$ and any element $x \in E^\times \smallsetminus F$ we have 
\begin{equation*}
    \O_{\D^\times}(\varphi; E, x) = \O_{\GL_2(F)}(f; E, x),
\end{equation*}
\end{itemize}

We record a very simple consequence of the definition of matching functions: 
\begin{equation}\label{eq:matchingonBtimesimpliesB1}
(\varphi \leftrightarrow f) \implies (\varphi \mathbbm{1}_{\D^1} \leftrightarrow f\mathbbm{1}_{\SL_2(F)}).
\end{equation}
Indeed, since conjugation by $B^\times$ preserves the reduced norm, we have 
\begin{equation*}
    \O_{\D^\times}(\varphi \mathbbm{1}_{\D^1}; E, x) = \begin{dcases}
    \O_{\D^\times}(\varphi; E, x), & \text{ if }\nrd(x) = 1,\\
    0, & \text{ otherwise},
    \end{dcases}
\end{equation*}
and similarly for the $\GL_2(F)$-orbital integrals.

Recall that $\D$ has a unique maximal order \cite[Proposition 13.3.4]{voight}, which we denote by $\O_\D$. On the other hand, $M_2(F)$ has many maximal orders, but they are all conjugate to $\M := M_2(\o)$. When we refer to a maximal order $\O$ in $M_2(F)$, we will always understand that $\O = \M$. 
\begin{defi}\label{defi:subgroupsdefininglevel}
When $\O$ is a maximal order in $B$ and $n \geq 0$ we define groups $\O^\times(\p^n)$ and $\O^1(\p^n)$ as follows:
\begin{itemize}
    \item If $n = 0$, then $\O^\times(\p^0) := \O^\times$ and $\O^1(\p^0) := \O^1$.
    \item If $n \geq 1$, then $\O^\times(\p^n) := 1 + \p^n \O$ and $\O^1(\p^n) := \O^\times(\p^n) \cap \O^1$.
\end{itemize} 
\end{defi}

\begin{defi}\label{defi:levelfunction}
Given a function $f$ on $B^\times$, we say that $f$ is of level $\p^n$ if $f(gu) = f(g) = f(ug)$ for all $g \in B^\times$ and $u \in \O^\times(\p^n)$. Similarly, given a function $f$ on $B^1$, we say that $f$ is of level $\p^n$ if $f(gu) = f(g) = f(ug)$ for all $g \in B^1$ and $u \in \O^1(\p^n)$. 
\end{defi}
The computations in the remainder of this section can be summarized in the following result.

\begin{prop}\label{prop:explicitmatching}
Every $\varphi \in C_c^\infty(\D^\times)$ of level $\p^n$ has a matching function $f \in C_c^\infty(\GL_2(F))$ of level $\p^{n+1}$ that is invariant under conjugation by $\GL_2(\o)$. Furthermore, if $\varphi$ is supported on $\O_\D^\times$, then $f$ can be chosen with the extra property that it is supported on $\GL_2(\o)$.

Similarly, every $\varphi \in C_c^\infty(\D^1)$ of level $\p^n$ has a matching function $f \in C_c^\infty(\SL_2(F))$ of level $\p^{n+1}$ that is invariant under conjugation by $\GL_2(\o)$. Furthermore, if $\varphi$ is supported on $\O_\D^1$, then $f$ can be chosen with the extra property that it is supported on $\SL_2(\o)$.
\end{prop}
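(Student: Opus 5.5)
The plan is to reduce, by linearity and the transformation rules \eqref{eq:transformationorbital}, to a small family of explicit ``building block'' functions on $\D^\times$, and for each of them to write down an explicit matching function on $\GL_2(F)$. The $\D^1$ case then follows from the $\D^\times$ case together with \eqref{eq:matchingonBtimesimpliesB1}. Indeed, a $\varphi \in C_c^\infty(\D^1)$ of level $\p^n$ extends to a function on $\D^\times$ supported on $\D^1$. Averaging over $\O_\D^\times(\p^n)$ turns it into a function of level $\p^n$ on $\D^\times$ whose restriction to $\D^1$ is $\varphi$; this uses that $\O_\D^\times(\p^n)\cap\D^1=\O_\D^1(\p^n)$ and that conjugation by $\O_\D^\times(\p^n)$ preserves the relevant cosets. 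We then multiply the matching $f$ by $\mathbbm{1}_{\SL_2(F)}$, which preserves level, support in $\GL_2(\o)$, and $\GL_2(\o)$-conjugation invariance.

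For the $\D^\times$ case, first observe the structure of $\D$. Since $\O_\D^\times(\p^n)=1+\Pi^{2n}\O_\D$ when written in terms of a uniformizer $\Pi$ of $\D$, a function of level $\p^n$ is a finite linear combination of indicators of double cosets. Because $\D^\times$ normalizes $\O_\D$ and its powers of the maximal ideal, these double cosets are sets of the form $g(1+\P^{2n})$. Orbital integrals only see the conjugation-averaged function, so we may replace $\varphi$ by its average over $\D^\times$-conjugation. This average is compactly supported modulo nothing troublesome, since $\D^\times/F^\times$ is compact and conjugation preserves the valuation of $\nrd$. We then reduce to functions of the shape $\mathbbm{1}_{a+\P^m}$ (multiplied by indicators of norm-valuation) for $a\in F$, together with functions supported on regular elements. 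Using $[+a]$ and $[\times b]$ from \eqref{eq:translationdilationsets}--\eqref{eq:transformationorbital}, it suffices to match $\mathbbm{1}_{\P^m}\cdot\mathbbm{1}_{\{v(\nrd)=k\}}$ for each $m,k$, and the indicators of small neighbourhoods $x_0(1+\P^{m})$ of regular elliptic $x_0$.

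For the local computation I would compute $\O_{\D^\times}(\mathbbm{1}_{\P^m};E,x)$ explicitly. It is a volume of $\{g: g^{-1}\iota(x)g\in\P^m\}$ modulo $\iota(E^\times)$, which by compactness of $\D^\times/F^\times$ equals a constant depending only on whether $\iota(\o_E)$-type conditions hold, i.e. on $v_E(x)$ and on whether $E/F$ is ramified. On the $\GL_2$ side I would use the classical count of optimal embeddings / lattice counting (the orders $\o+\p^r\o_E$ and their embeddings into $\M$, Lemma \ref{lem:optimalembeddingsKO}) to compute $\O_{\GL_2(F)}(\mathbbm{1}_{\o+\p^j\M}\cdot(\ldots);E,x)$ in terms of $v(\mathrm{disc}(x))$ and the conductor. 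I would then solve for a linear combination of such $\GL_2(\o)$-conjugation-invariant functions of level $\p^{n+1}$ whose orbital integrals equal the quaternionic ones on all elliptic $E$ and vanish on split $E$. A natural guess is a difference $f=c_1\mathbbm{1}_{\o+\p^{j}\M}-c_2\mathbbm{1}_{\o+\p^{j+1}\M}$ type expression, which kills split tori by a telescoping of the counts. The support claim follows because if $\varphi$ is supported on $\O_\D^\times$, all building blocks used have integral reduced norm and trace, and the $f$'s chosen are supported in $\M\cap\GL_2(F)$ with unit determinant.

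The main obstacle is this explicit $\GL_2$ side computation: showing that the chosen combination has orbital integrals vanishing identically on split tori while matching for every ramified and unramified $E$ and every $x$, with level only $\p^{n+1}$. The off-by-one in the level comes from $\P^{2n}\supsetneq\p^{n+1}\O_\D$. I would first handle the regular elliptic neighbourhoods, where matching is easy via transfer along a fixed embedding, and then concentrate the effort on the central-type pieces $a+\P^m$, checking the identity case by case in $v(\mathrm{disc}(x))$.
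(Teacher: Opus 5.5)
Your outer reduction is the paper's: write $\varphi$ as a combination of $\mathbbm{1}_{\alpha U_\D^{2n}}$, match each piece, average over $\GL_2(\o)$-conjugation, and pass to $\D^1$ via \eqref{eq:matchingonBtimesimpliesB1}. But the proposition's content is the explicit matching, with level control, of the two kinds of building blocks. For central cosets ($d(\alpha)\ge 2n$, so $\alpha U_\D^{2n}=tU_\D^{2n}$) the paper cites Tang--Wu--Yang--Yang \cite{tang2025} (Proposition~\ref{prop:matchingtan}). For regular elliptic cosets ($d(\alpha)<2n$) it proves Proposition~\ref{prop:mymatching}. Your proposal supplies neither, and the one formula you commit to cannot work.

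Take $E/F$ a field and $x\in\o_E^\times$ with $\o[x]=\o_E$. For $j\ge 1$, $g^{-1}xg\in\o+\p^j\M$ would put $x$ in $\o+\p^j\o_E$, so among the $\mathbbm{1}_{\o+\p^j\M}$ only $\mathbbm{1}_{\M}$ sees $x$. Since $K_\M=F^\times\M^\times$, Lemma~\ref{lem:optimalembeddingsOtimes} gives $m(\o_E;\M,\M^\times)=1$ for every $E$. Hence $\O_{\GL_2(F)}(\mathbbm{1}_{\M^\times};E,x)=\vol(\M^\times)/\vol(\o_E^\times)$ whether $E$ is ramified or not. On the other side, $\O_{\D^\times}(\mathbbm{1}_{U_\D};E,x)=[\D^\times:E^\times U_\D]\,\vol(U_\D)/\vol(\o_E^\times)$, which is twice as large for $E$ unramified as for $E$ ramified. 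So no combination of the $\mathbbm{1}_{\o+\p^j\M}$, cut off to $\det\in\o^\times$, matches $\mathbbm{1}_{U_\D}$. Telescoping can cancel the growth of these orbital integrals in $n_\M(x)$, but not this bottom term, which cannot see the type of $E$.

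What is missing is a function that separates the three kinds of tori already at the maximal order. The paper uses the Iwahori order $\J$, into which $\o_E$ embeds optimally for $E$ ramified or split but not for $E$ unramified (Lemma~\ref{lem:optimalembeddingsKO}). The matching functions of Proposition~\ref{prop:matchingtan} are combinations of $\mathbbm{1}_{tU_\M^k}$ and $\mathbbm{1}_{tU_\J^{k'}}$. This is also the real source of the level loss: by \eqref{eq:chaininclusionshereditarymaximal}, $U_\J^{2n}$ contains $U_\M^{n+1}$ but not $U_\M^n$. The inclusion $\P_\D^{2n}\supsetneq\p^{n+1}\O_\D$ you cite plays no role.

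For the regular elliptic pieces, ``transfer along a fixed embedding'' is not yet a construction, since $\alpha U_\D^{m}\not\subset F[\alpha]$. You must choose a neighbourhood of $\beta=\iota(\alpha)$ in $\GL_2(F)$, and the right choice depends on the ramification of $F[\alpha]$:
\begin{itemize}
\item if $d(\alpha)$ is even, take $\beta U_\M^{\lceil m/2\rceil}$ with $\iota$ optimal into $\M$;
\item if $d(\alpha)$ is odd, take $\beta U_\J^{m}$ with $\iota$ optimal into $\J$.
\end{itemize}
Each is normalized by its volume, and the hypothesis $d(\alpha)<m$ is essential. Proving this is the bulk of Section~\ref{subsec:proofoflocalmatching}, and it needs two inputs:
\begin{itemize}
\item equality of characteristic-polynomial images, $h(\alpha U_\D^m)=h(\beta U_\M^{\lceil m/2\rceil})$ or $h(\beta U_\J^m)$ (Lemmas~\ref{lem:charpolyunramified} and~\ref{lem:charpolyramified}, via Hensel);
\item the stabilizer computation $S_{K_\O}(\mathbbm{1}_{\alpha U_\O^k})=F[\alpha]^\times U_\O^k$ for $\O\in\{\M,\J,\O_\D\}$ (Lemma~\ref{lem:centralizer}).
\end{itemize}
Combined with the embedding numbers, these give the closed formula of Proposition~\ref{prop:orbitalintegralneighbourhood} and make the volume normalizations agree. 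A soft argument near regular elliptic elements would give some matching function but no control of its level, and level $\p^{n+1}$ is part of the statement.

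A smaller point concerns the $\D^1$ step. Averaging the extension by zero over $\O_\D^\times(\p^n)$ gives $0$, because $\O_\D^1(\p^n)$ is a null set there. Instead, write $\varphi=\sum_i c_i\mathbbm{1}_{\alpha_i\O_\D^1(\p^n)}$ with $\alpha_i\in\D^1$ and put $\varphi_0=\sum_i c_i\mathbbm{1}_{\alpha_iU_\D^{2n}}$. This restricts to $\varphi$ because $U_\D^{2n}\cap\D^1=\O_\D^1(\p^n)$.
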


We will now explain in detail how $f$ is constructed in terms of $\varphi$.

Let $E$ be a separable $F$-algebra of degree two. We recall that the orders in $E$ are indexed by $\mathbb{Z}_{\geq 0}$. Indeed, any order of $E$ is of the form $\o_{E, n} := \o + \varpi^n \o_E$ for a unique $n\geq 0$. 

If $\O$ is an order in a quaternion algebra $B$ and $\iota \in \Emb(E; B)$ is an embedding, then $\iota^{-1}(\O)$ is an order in $E$, which must therefore equal $\o_{E, n}$ for some $n\geq 0$. This justifies the following definition.
\begin{defi}
Given $\iota \in \Emb(E; B)$ and an order $\O$ in $B$, we let $n_\O(\iota)$ be the nonnegative integer such that $\iota^{-1}(\O) = \o_{E, n_\O(\iota)}$. If $x \in B_{\reg}$ we let $n_\O(x) := n_\O(\iota)$ for the inclusion $\iota: F[x] \subset B$.
\end{defi}
Given an order $\O$ in a quaternion algebra, not necessarily maximal, we let $\P_\O$ be its Jacobson radical, and we let $K_\O$ be its normalizer inside $B^\times$, so that $K_\O := \{g \in B^\times : g^{-1}\O g= O\}$. We use the notation $\P_\D$ and $K_\D$ to refer to the Jacobson radical and normalizer, respectively, of the unique maximal order of the division algebra $\D$. On top of the maximal orders $\O_\D$ and $\M$ we will also need to consider the standard non-maximal hereditary order of $M_2(F)$, denoted by $\J$. We quickly recall a bit of background about these three orders. 
\begin{itemize}
    \item When $B = M_2(F)$ every maximal order is conjugate to $\M = M_2(\o)$. The normalizer of $\M$ in $\GL_2(F)$ is given by $K_\M = \varpi^\Z \M^\times$. The Jacobson radical of $\M$ is $\P_\M = \varpi \M$. Note that $\M/\P_\M \simeq M_2(k(\p))$. For these details see \cite[Paragraph 23.2.3]{voight}.
    
    \item When $B = M_2(F)$, we also consider $\J := \begin{psmallmatrix}
    \o & \o\\
    \p & \o
\end{psmallmatrix}$. The Jacobson radical of $\J$ is $\P_\J = \begin{psmallmatrix}
    \p & \o\\
    \p & \p
\end{psmallmatrix}$. If we let $\varpi_\J = \begin{psmallmatrix}
    0 & 1\\
    \varpi & 0
\end{psmallmatrix}$, then $\P_\J = \varpi_\J \J = \J \varpi_\J$. The normalizer of $\J$ in $\GL_2(F)$ is $K_\J = \varpi_\J^\Z \J^\times$. Note that $\J/\P_\J \simeq k(\p)\times k(\p)$. For these and other details, see \cite[Section 23.3]{voight}.

\item When $B = \D$, then $\O_\D$ is the unique maximal order of $B$. Recall that there is a valuation $v_\D: \D\rightarrow \Z \cup \{\infty\}$, with the usual properties:
\begin{itemize}
    \item $v_\D(0) = \infty$.
    \item $v_\D(xy) = v_\D(x) + v_\D(y)$.
    \item $v_\D(x + y) \geq \min(v_\D(x), v_\D(y))$.
\end{itemize} 
Explicitly, this valuation is given by the formula $v_\D(x) = v(\nrd(x))$. The maximal order can be described as $\O_\D = \{x \in \D: v_\D(x) \geq 0\}$. Its normalizer $K_\D$ is given by $K_\D = \D^\times$. The Jacobson radical $\P_\D$ of $\O_\D$ is $\P_\D = \{x \in \D: v_\D(x) \geq 1\}$. Note that $k(\P_\D) := \O_\D / \P_\D$ is a field extension of degree two of $k(\p)$. For these and other details, see \cite[Section 13.3]{voight}.
\end{itemize}

\begin{defi}\label{defi:principalneighbourhoods}
Let $\O \in \{\M, \J, \O_\D\}$. In each of the three cases above we let $U_\O := \O^\times$. The principal filtration of $U_\O$ is defined as follows:
\begin{itemize}
    \item $U_\O^0 := U_\O$.
    \item $U_\O^n := 1 + \P_\O^n$ for $n \geq 1$.
\end{itemize}
Similarly, we define a filtration of the normalizer $K_\O$ by letting:
\begin{itemize}
    \item $K_\O^{-1} := K_\O$.
    \item $K_\O^n := F^\times U_\O^n$ for $n \geq 0$. 
\end{itemize}
\end{defi}
Note that each subgroup $U_\O^n$ and $K_\O^n$ is normal in $K_\O$. We will also use the notation $U_\D^n = U^n_{\O_\D}$, as well as $K_\D^n = K_{\O_\D}^n$. Comparing these definitions with the groups $\O^\times(\p^n)$ introduced before we see that
\begin{equation}\label{eq:comparisonprincipalcongruencesubgroups}
    \M^1(\p^n) = U^n_\M, \quad \O_\D^1(\p^n) = U^{2n}_\D
\end{equation}
for any $n \geq 0$. On the other hand the groups $U^n_\M$ and $U^n_\J$ are related by the chain of inclusions
\begin{equation}\label{eq:chaininclusionshereditarymaximal}
    U_\M^{n + 1} \subsetneq U_\J^{2n + 1} \subsetneq U_\J^{2n} \subsetneq U_\M^{n}
\end{equation}
for any $n \geq 0$. We can use the filtration $K_\D^n$ to define a depth function on $\D^\times$ as follows.
\begin{defi}\label{defi:filtrationunitsdivision}
Given $\alpha \in \D^\times$, we define $d(\alpha) = \sup\{n \geq -1 : \alpha \in K_\D^{n}\}$.
\end{defi}
In particular, $d(\alpha) = -1$ holds precisely when $v_\D(\alpha)$ is odd, and $d(\alpha) = \infty$ holds precisely when $\alpha \in F^\times$. Also, the depth $d(\alpha)$ is even or odd according to whether the extension $F[\alpha]/F$ is unramified or ramified, respectively, see Section \ref{subsubsec:finishingtheproofoflocalmatching}. 

We are ready to state an explicit matching of certain test functions.

\begin{prop}\label{prop:mymatching}
Let $\alpha \in \D^\times \smallsetminus F$ and $n \geq 0$. Suppose that $d(\alpha)<n$. 
\begin{enumerate}
    \item [i)] If $d(\alpha)$ is even, then we have 
    \begin{equation*}
\varphi := \frac{1}{\vol(U_\D^{n})}\mathbbm{1}_{\alpha U_\D^{n}} \leftrightarrow f := \frac{1}{\vol(U_\M^{\lceil n/2 \rceil})}\mathbbm{1}_{\beta U_\M^{\lceil n/2 \rceil}},
\end{equation*}
where $\beta = \iota(\alpha)$ for any $\iota \in \Emb(F[\alpha]; M_2(F))$ satisfying $n_\M(\iota) = 0$.
    \item [ii)] If $d(\alpha)$ is odd, then we have 
    \begin{equation*}
\varphi := \frac{1}{\vol(U_\D^{n})}\mathbbm{1}_{\alpha U_\D^n} \leftrightarrow f := \frac{1}{\vol(U_\J^{n})}\mathbbm{1}_{\beta U_\J^{n}},
\end{equation*}
where $\beta = \iota(\alpha)$ for any $\iota \in \Emb(F[\alpha]; M_2(F))$ satisfying $n_\J(\iota) = 0$.
\end{enumerate}
\end{prop}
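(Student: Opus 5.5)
The plan is to compute both orbital integrals directly as volumes and to check that they agree term by term. Fix $E=F[\alpha]$; this is a field, since $\alpha\in\D^\times\smallsetminus F$. Fix also a Haar measure on $E^\times$. In each case $\varphi=\vol(U)^{-1}\mathbbm{1}_{\gamma U}$ for some pair $(\gamma,U)$: either $(\alpha,U_\D^n)$, or $(\beta,U_\M^{\lceil n/2\rceil})$, or $(\beta,U_\J^n)$. The orbital integral \eqref{eq:defioforbitalintegral} becomes
\begin{equation*}
\O(\varphi;E,x)=\frac{1}{\vol(U)}\vol\bigl(\{g\in\iota(E^\times)\backslash B^\times: g^{-1}\iota(x)g\in\gamma U\}\bigr).
\end{equation*}
The set being measured is a union of double cosets $\iota(E^\times)\,g\,U$. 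Each double coset contributes $\vol(U)/\vol_{E^\times}(\iota(E^\times)\cap gUg^{-1})$ to the integral. After dividing by $\vol(U)$, the normalization of the Haar measure on $B^\times$ drops out. Since the measure on $E^\times$ is the same on both sides, the answer is intrinsic. Both sides vanish unless $F[x]\simeq E$; in particular they vanish when $E\simeq F\times F$, because $\gamma U$ consists only of elements generating a field isomorphic to $F[\alpha]$ (this is shown below).

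The key step, which I expect to be the main obstacle, is a structural lemma about the neighbourhood $\gamma U$ under the hypothesis $d(\alpha)<n$. The claim is that the map
\begin{equation*}
U\times(\iota(E^\times)\cap U)\to\gamma U,\qquad (u,e)\mapsto u^{-1}\gamma e\,u,
\end{equation*}
is surjective. In other words, every element of $\gamma U$ is $U$-conjugate to an element of $\gamma\,(E\cap U)$.

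To prove this I would argue by successive approximation along the filtration. Write $B=E\oplus E^{\perp}$, where $E^{\perp}$ is the orthogonal complement for the reduced trace pairing. Then $\gamma y$ with $y\in U$ has an $E^\perp$-component at some depth $m\ge n$. I want to kill it by conjugating with $1+z$, where $z$ is chosen in the appropriate graded piece. This requires that $\mathrm{ad}(\gamma)\colon z\mapsto \gamma z-z\gamma$ maps the graded piece $\P^{m-d}\cap E^\perp$ onto $\gamma\P^{m}\cap E^\perp$ modulo higher terms. That in turn is exactly the statement that $\gamma$ has depth $d=d(\alpha)$: $\gamma\in F^\times(1+\P^{d})$, with $d$ maximal. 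Since $d<n$, the conjugating elements $1+z$ lie in $U$, and the iteration converges. The same argument, applied in reverse, shows that the $U$-conjugates of $\iota(x)$ that meet $\gamma U$ correspond to $x\in\alpha(E\cap U)$. I would also have to count how many double cosets $\iota(E^\times)gU$ there are, i.e. how many $\iota(E)$-non-conjugate optimal embeddings land in $\gamma U$. In $\D$ there is just one, since $K_\D=\D^\times$. In $M_2(F)$ I expect one as well, by the uniqueness up to $K_\O$-conjugacy of embeddings with $n_\M(\iota)=0$ (resp. $n_\J(\iota)=0$) of an unramified (resp. ramified) $E$, recorded in Lemma \ref{lem:optimalembeddingsKO}.

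Granting this, both sides equal $\mathbbm{1}_{x\in\alpha(E\cap U)}$ times the inverse of $\vol_{E^\times}(E\cap U)$, up to the same index factor. So it remains to check that the intersections with $E$ agree:
\begin{itemize}
\item[i)] If $d(\alpha)$ is even, $E$ is unramified. Then $\P_\D\cap E=\p_E=\p\o_E$, so $U_\D^n\cap E=1+\p^{\lceil n/2\rceil}\o_E$. For an optimal embedding into $\M$ we have $U_\M^{m}\cap\iota(E)=\iota(1+\p^m\o_E)$, and taking $m=\lceil n/2\rceil$ gives the same group.
\item[ii)] If $d(\alpha)$ is odd, $E$ is ramified. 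Then $U_\D^n\cap E=1+\p_E^n$. For $n_\J(\iota)=0$ the uniformizer of $E$ maps to a generator of $\P_\J$, so $U_\J^n\cap\iota(E)=\iota(1+\p_E^n)$.
\end{itemize}
In both cases the conditions on $x$ and the volumes coincide, which gives $\varphi\leftrightarrow f$. I expect the depth parity statement and the surjectivity of the graded commutator maps to require the most care, particularly in the ramified case with $\J$, where the filtration steps are half-integral relative to $\M$.
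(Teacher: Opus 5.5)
The decisive gap is your structural lemma, and with it your claim that both sides vanish unless $F[x]\simeq F[\alpha]$. Both fail as soon as the neighbourhood contains elements generating non-isomorphic quadratic fields, and this happens inside the range of the proposition.

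\emph{Every residue characteristic.} Take $d(\alpha)=-1$, $n=0$ and $v_\D(\alpha)=1$.
\begin{itemize}
\item[$\bullet$] $\alpha U_\D^0=\{y\in\D: v_\D(y)=1\}$ contains a uniformizer of \emph{every} ramified quadratic extension $E'/F$, since all of them embed in $\D$.
\item[$\bullet$] Likewise $\beta\J^\times=\varpi_\J\J^\times$ contains one, by Lemmas \ref{lem:optimalembeddingsKO} and \ref{lem:descriptionuniformizers}.
\item[$\bullet$] So for $E'\not\simeq F[\alpha]$ and $x$ a uniformizer of $E'$, both orbital integrals are nonzero.
\item[$\bullet$] No element of $\alpha\O_\D^\times$ generating $E'$ is conjugate into $\gamma(E\cap U)\subset F[\alpha]$.
\end{itemize}
Your argument dismisses such $(E',x)$ as zero on both sides and never actually compares them.

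\emph{Residue characteristic $2$.} The same happens with $n\ge1$. Over $\Q_2$, take $h(\alpha)=(0,2)$ and $n=1$. Lemma \ref{lem:charpolyramified} gives $h(\alpha U_\D^1)=h(\alpha)+\p\times\p^2\ni(2,2)$. So $\alpha U_\D^1$ contains a root of $X^2-2X+2$, which generates $\Q_2(i)\not\simeq\Q_2(\sqrt{-2})$. Via \eqref{eq:transformationsetramified} the same occurs for odd $d(\alpha)$ and $n=d(\alpha)+1$. Your graded surjectivity of $\mathrm{ad}(\gamma)$ also breaks here. On $E^\perp=Ew$ (with $we=\overline{e}w$), $\mathrm{ad}(\gamma)$ is multiplication by $\gamma-\overline{\gamma}$. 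For wildly ramified $E$ its valuation exceeds what the depth predicts; for instance $\gamma-\overline{\gamma}=2\sqrt{-2}$.

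\emph{What is missing.} You need (a) that the two neighbourhoods meet exactly the same characteristic polynomials, and (b) the orbital integral at an arbitrary point of the neighbourhood, not only at points of $\gamma(E\cap U)$. The paper proves (a) as $h(\alpha U_\D^n)=h(\beta U_\M^{\lceil n/2\rceil})$, resp.\ $h(\alpha U_\D^n)=h(\beta U_\J^n)$, by Hensel in Lemmas \ref{lem:charpolyunramified} and \ref{lem:charpolyramified}. For (b) it first reduces to $d(\alpha)\in\{-1,0\}$ via \eqref{eq:transformationorbital}. It then re-centres at any $\alpha'\in\alpha U^n$ with $h(\alpha')=h(x)$, and applies the stabilizer computation (Lemma \ref{lem:centralizer}) and the embedding count to $\alpha'$ (Proposition \ref{prop:orbitalintegralneighbourhood}).

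Separately, even where the field is constant, the lemma is misstated once $d(\alpha)\ge1$. An $E^\perp$-component in $\gamma\P^m$ with $m\ge n$ is removed by $1+z$ with $z\in\P^{m-d}$. Such an element lies in $U^{n-d}$, not in $U=U^n$. Concretely, in $\M$ take $\beta=1+\varpi\beta_0$ with $h(\beta_0)\in Z_u$ and $m\ge2$. Every $U_\M^m$-conjugate of $\beta(\iota(E)\cap U_\M^m)$ lies in $\beta+\varpi^m\iota(\o_E)+\varpi^{m+1}\M\subsetneq\beta U_\M^m$, so your map $U\times(E\cap U)\to\gamma U$ is not surjective.

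Consequently $\{g:g^{-1}\iota(x)g\in\gamma U\}$ is $\iota(E^\times)$ times a larger congruence subgroup, for instance $U_\D^{n-d}$ on the $\D$ side. It splits into $q^{d(\alpha)}$ double cosets $\iota(E^\times)gU$ on each side, not one. Because this number agrees on the two sides, the ``same index factor'' you allow for does exist, but you have to compute it. Neither $K_\D=\D^\times$ nor uniqueness of optimal embeddings up to $K_\O$ controls double cosets modulo $U$. What does is the stabilizer of $\gamma U$ under $K_\O$-conjugation, i.e.\ Lemma \ref{lem:centralizer} transported by \eqref{eq:transformationorbital}.
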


Note that an embedding $\iota$ as in the proposition exists in each of the three cases, see Lemma \ref{lem:optimalembeddingsKO}. The proof of Proposition \ref{prop:mymatching} occupies Section \ref{subsec:proofoflocalmatching}. The complementary case $d(\alpha) \geq n$ was already dealt with in \cite{tang2025}.
\begin{prop}[Tang, Wu, Yang, Yang]\label{prop:matchingtan}
Let $\alpha \in \D^\times$ and $n \geq 0$. Suppose that $d(\alpha) \geq  n$ and write $\alpha = t \alpha_0$, with $t \in F^\times$ and $\alpha_0 \in U_\D^n$.
\begin{enumerate}
    \item [i)] If $n = 0$, we have 
    \begin{equation*}
        \varphi := \frac{1}{\vol(U_\D)}\mathbbm{1}_{\alpha U_\D} \leftrightarrow f := \frac{2}{\vol(U_\M)}\mathbbm{1}_{t U_\M} - \frac{1}{\vol(U_\J)}\mathbbm{1}_{t U_\J}.
    \end{equation*}
    \item [ii)] If $n = 2k$ with $k \geq 1$, then we have 
    \begin{equation*}
        \varphi := \frac{1}{\vol(U_\D^{2k})}\mathbbm{1}_{\alpha U_\D^{2k}} \leftrightarrow f := \frac{2q}{(q - 1)\vol(U_\M^k)}\mathbbm{1}_{t U^k_\M} - \frac{q + 1}{(q - 1)\vol(U^{2k}_\J)}\mathbbm{1}_{t U^{2k}_\J}.
    \end{equation*}
    \item [iii)] If $n = 2k - 1$ with $k \geq 1$, then we have 
    \begin{equation*}
        \varphi := \frac{1}{\vol(U_\D^{2k-1})}\mathbbm{1}_{\alpha U_\D^{2k - 1}} \leftrightarrow f := -\frac{2}{(q - 1)\vol(U_\M^{k})}\mathbbm{1}_{t U^{k}_\M} + \frac{q + 1}{(q - 1)\vol(U^{2k - 1}_\J)}\mathbbm{1}_{t U^{2k-1}_\J}.
    \end{equation*}
\end{enumerate}
\end{prop}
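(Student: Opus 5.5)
We first reduce to $t = 1$. Since $d(\alpha) \geq n$, we can write $\alpha = t\alpha_0$ with $\alpha_0 \in U_\D^n$, so $\alpha U_\D^n = tU_\D^n$. By \eqref{eq:translationdilationsets} and \eqref{eq:transformationorbital}, dilation by $t$ commutes with taking orbital integrals on both sides. So it suffices to treat $\varphi = \vol(U_\D^n)^{-1}\mathbbm{1}_{U_\D^n}$ and the corresponding $f$ with $t = 1$.

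\textbf{The $\D^\times$ side.} Here $U_\D^n$ is normal in $K_\D = \D^\times$. Hence $g^{-1}\iota(x)g \in U_\D^n$ if and only if $x \in U_\D^n$, and
\[
\O_{\D^\times}(\varphi; E, x) = \frac{\vol(E^\times\backslash\D^\times)}{\vol(U_\D^n)}\,\mathbbm{1}_{U_\D^n}(\iota(x)).
\]
Since $v_\D = v\circ\nrd$, the condition $\iota(x) \in U_\D^n$ is explicit. For $n = 0$ it reads $\nrd(x) \in \o^\times$. For $n \geq 1$ it reads $v(\nrd(x - 1)) \geq n$, i.e.\ $v_E(x - 1) \geq n$ after normalising by the ramification of $E/F$. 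The quotient $E^\times\backslash\D^\times$ is compact, so its volume is a constant depending only on whether $E/F$ is unramified or ramified. This volume can be computed as $\vol(\D^\times)$ modulo the centre, divided by $\vol(E^\times/F^\times)$.

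\textbf{The $\GL_2(F)$ side.} For $\O \in \{\M, \J\}$ we have
\[
g^{-1}\iota(x)g \in U_\O^m \iff \iota(x) \in 1 + g\P_\O^m g^{-1},
\]
or the analogous unit condition when $m = 0$. Thus $\O_{\GL_2(F)}(\mathbbm{1}_{U_\O^m}; E, x)$ equals $\vol(K_\O/F^\times)$ times the number of $E^\times$-orbits of conjugates $g\O g^{-1}$ (vertices for $\M$, edges for $\J$ of the Bruhat--Tits tree) whose radical power contains $\iota(x) - 1$. These orbits are weighted by the stabiliser volume, which is $\vol(\o_{E,j}^\times)^{-1}$ up to a fixed normalisation. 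Write $y := \varpi^{-k}(x - 1)$ for the $\M$-condition; for $\J$ use $\varpi_\J$-powers and $\lceil\cdot\rceil$ bookkeeping. The condition becomes $\o[y] \subset \iota^{-1}(g\O g^{-1})$, i.e.\ $n_{g\O g^{-1}}(\iota) \leq n_\o(y)$, where $n_\o(y)$ is the conductor of $\o[y]$.

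The embeddings with $n_\O(\iota) = j$ are classified up to $E^\times$ using Lemma \ref{lem:optimalembeddingsKO}: $\M$-orbits with conductor $j$ have $[\o_E^\times : \o_{E,j}^\times]$-type weights, and $\J$-orbits exist in optimal form only when $E$ is split or ramified. Summing over $0 \leq j \leq n_\o(y)$ then gives closed formulas for both orbital integrals.

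\textbf{Comparison.} We check the claimed identities case by case: $E$ split, $E/F$ unramified, and $E/F$ ramified, combined with the three cases $n = 0$, $n = 2k$, $n = 2k-1$.
\begin{itemize}
\item For $E$ split, the $\D$-side vanishes. The required vanishing on the $\GL_2$ side is an Euler-characteristic cancellation: the weighted vertex count of the subtree fixed by $\o[y]$ against its weighted edge count. The coefficients $2$ vs.\ $1$ (resp.\ $2q/(q-1)$ vs.\ $(q+1)/(q-1)$) are exactly those for which the geometric sums $\sum_j q^j$ telescope.
\item For $E$ a field, the same sums become finite because the fixed subtree is a ball of radius $n_\o(y)$ around the $E^\times$-fixed vertex or edge. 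The combination collapses to a constant supported exactly on $\{x : \iota(x) \in U_\D^n\}$. Here $x \in U_\D^n$ is translated to $n_\o(y) \geq 0$ versus $n_\o(y) = -1$, or to the parity of $v_E(x-1)$ in the ramified case.
\item Finally, the constants are matched using the volume relations between $U_\M^k$, $U_\J^m$ and $U_\D^n$ coming from \eqref{eq:chaininclusionshereditarymaximal} and the indices $[U_\D : U_\D^n]$.
\end{itemize}

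The main obstacle is the $\GL_2$ computation when $x - 1$ is close to a scalar in the odd cases. There the $\J$-condition depends on $v_E(x-1)$ in half-integer steps, and one must verify that the negative $\M$-term exactly cancels the contributions of elements not lying in $U_\D^{2k-1}$. I would first check the case $n = 0$ directly on the tree, where this is the classical $2\,\#\text{vertices} - \#\text{edges} = 2$ or $1$ or $0$. I would then obtain the higher $n$ by rescaling $y$ and tracking the extra factor $q$ coming from $[U_\J^{2k-1} : U_\J^{2k}]$.
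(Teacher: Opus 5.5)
Your reduction to $t=1$ via \eqref{eq:translationdilationsets} and \eqref{eq:transformationorbital} is the only step the paper carries out itself. For $t=1$ the paper simply quotes Proposition 2.4 and Remark 2.5 of \cite{tang2025}. Re-deriving that input by counting fixed vertices and edges of the Bruhat--Tits tree is a genuinely different, self-contained route, and its framework is sound. By Lemma \ref{lem:optimalembeddingsKO} there is exactly one $E^\times$-orbit of vertices (resp.\ edges) $gK_\O$ with $\iota^{-1}(g\O g^{-1})=\o_{E,j}$ for each $j\ge 0$, except that there is no such edge for $j=0$ when $E/F$ is unramified. Your $\D^\times$-side computation is also right. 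The citation spares the paper a case-by-case count; your route would make the section self-contained and explain the coefficients as combinatorial identities on the tree. As written, however, you assert the outcome of the count rather than perform it, and two of your intermediate claims would give wrong answers.

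\textbf{Orbit weights.} The weight $\vol(K_\O/F^\times)/\vol\bigl((E^\times\cap gK_\O g^{-1})/F^\times\bigr)$ is not $\vol(\o_{E,j}^\times)^{-1}$ up to a fixed normalisation. For $E/F$ ramified, the level-$0$ edge is flipped by a uniformiser of $E$. Its stabiliser is therefore all of $E^\times$ (Lemma \ref{lem:adaptedembeddingsorders}), which contains $F^\times\o_E^\times$ with index $2$. With your weights the ramified cases come out wrong, e.g.\ $0$ instead of a nonzero constant in i) and ii).

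The cleanest fix when $E$ is a field is to skip orbits. Since $E^\times/F^\times$ is compact, $\O_{\GL_2(F)}(\mathbbm{1}_{U_\O^r};E,x)$ equals $\vol(K_\O/F^\times)/\vol(E^\times/F^\times)$ times the actual number of vertices or edges in the finite fixed ball, where $\vol(K_\J/F^\times)=2\vol(\J^\times)/\vol(\o^\times)$. Use $\vol(\D^\times/F^\times)=2\vol(U_\D)/\vol(\o^\times)$ together with, for $k,r,n\ge 1$,
$[\M^\times:U_\M^k]=q^{4k-3}(q-1)(q^2-1)$, $[\J^\times:U_\J^r]=(q-1)^2q^{2r-2}$ and $[U_\D:U_\D^n]=(q^2-1)q^{2n-2}$.
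Then cases i) and ii) reduce to $\#V-\#E=1$ for the ball of level $\le m$, where $\o_{E,m}=\o[x]$ resp.\ $\o[\varpi^{-k}(x-1)]$. All terms vanish unless $x\in\o_E^\times$, resp.\ $\varpi^{-k}(x-1)\in\o_E$.

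Only for split $E$ do you need orbit weights. There $c_j=[\o_E^\times:\o_{E,j}^\times]$ gives $\sum_{j\le m}c_j=q^m$ for both vertices and edges, and the factor $[K_\J:F^\times\J^\times]=2$ produces the cancellation.

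\textbf{Case iii).} You leave this case open, and it needs an idea that ``$\varpi_\J$-powers and $\lceil\cdot\rceil$ bookkeeping'' does not supply. Here $U_\J^{2k-1}=1+\varpi^{k-1}\P_\J$. With $z:=\varpi^{1-k}(x-1)$ the condition is $\iota(z)\in g\P_\J g^{-1}$, which is not an order-containment condition for any rescaling of $z$ inside $E$.

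The key point is that $\J/\P_\J\simeq k(\p)\times k(\p)$ has no nonzero nilpotents. Hence $\iota(z)\in g\P_\J g^{-1}$ holds if and only if $\iota(z)\in g\J g^{-1}$ and $z\in\p_E$. Moreover, $z\in\p_E$ is exactly the $\D$-side condition $\iota(x)\in U_\D^{2k-1}=1+\varpi^{k-1}\P_\D$.

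Write $\o[z]=\o_{E,N}$. The $\J$-term counts edges of level $\le N$, and the $\M$-term counts vertices of level $\le N-1$; the latter vanishes when $z$ is a uniformiser of a ramified $E$, i.e.\ when $N=0$. After the index computation, iii) becomes $\#E(N)=q\,\#V(N-1)+1$ for $E$ a field. This holds because in the ball of level $\le N$ the vertices of level $<N$ have valence $q+1$ and those of level $N$ are leaves. For split $E$ the identity is $2q^N-2q\cdot q^{N-1}=0$.

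Note also that $[U_\J^{2k-1}:U_\J^{2k}]=q^2$ by Lemma \ref{lem:volumeprincipalsubgroups}, not $q$. The factor that actually appears is the ratio $\tfrac{2}{q-1}[\M^\times:U_\M^k]\big/\tfrac{q+1}{q-1}[\J^\times:U_\J^{2k-1}]=2q$.

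With these corrections your outline becomes a complete proof of the cited result.
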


\begin{proof}
This result follows inmediately by combining Proposition 2.4 and Remark 2.5 from \cite{tang2025} together with \eqref{eq:transformationorbital} above.
\end{proof}

Note that in the previous two propositions, if $\varphi$ is of level $\p^n$, then the matching function $f$ is of level $\p^{n + 1}$, recall Definition \ref{defi:levelfunction} as well as equations \eqref{eq:comparisonprincipalcongruencesubgroups} and \eqref{eq:chaininclusionshereditarymaximal}. We now explain how Proposition \ref{prop:mymatching} and Proposition \ref{prop:matchingtan} imply Proposition \ref{prop:explicitmatching}.

\begin{proof}[Proof of Proposition \ref{prop:explicitmatching} assuming Proposition \ref{prop:mymatching}.]
Suppose that $\varphi \in C_c^\infty(\D^\times)$ is of level $\p^n$. By definition it is invariant under translation by $U_\D^{2n}$, so we can find finitely-many elements $\alpha_i \in \D^\times$ and constants $c_i \in \CC$ such that  
\begin{equation*}
    \varphi = \sum_{i} c_i \mathbbm{1}_{\alpha_i U_\D^{2n}}.
\end{equation*}
By Proposition \ref{prop:mymatching} and Proposition \ref{prop:matchingtan} we can find functions $f_i \in C_c^\infty(\GL_2(F))$ of level $\p^{n + 1}$ such that $\mathbbm{1}_{\alpha_i U_\D^{2n}} \leftrightarrow f_i$. By linearity we deduce that 
\begin{equation*}
    \varphi \leftrightarrow f:= \sum_i c_i f_i.
\end{equation*}
Since orbital integrals are invariant under conjugation, after replacing $f$ by $x \mapsto \frac{1}{\vol(\GL_2(\o))}\int_{g \in \GL_2(\o)}f(g^{-1}xg) \, dg$ we can assume that $f$ is invariant under conjugation by $\GL_2(\o)$. This finishes the proof of the first claim in Proposition \ref{prop:explicitmatching}. To deduce the second claim, note that if $\varphi$ is supported in $\O_\D^\times$, then in the argument above we can assume that $\alpha_i \in \O_\D^\times$. By Propositions \ref{prop:mymatching} and \ref{prop:matchingtan} we can take $f_i \in C_c^\infty(\GL_2(\o))$, and then $f \in C_c^\infty(\GL_2(\o))$ as well.

For the claims about $\D^1$, suppose that we start with $\varphi \in C_c^\infty(\D^1)$ of level $\p^{n}$. Then we can find $\varphi_0 \in C_c^\infty(\D^\times)$ of level $\p^{n}$ such that $\varphi = \varphi_0 \mathbbm{1}_{\D^1}$. By the previous case there is $f_0 \in C_c^\infty(\GL_2(F))$ of level $\p^{n + 1}$, invariant under conjugation by $\GL_2(\o)$, and such that $\varphi_0 \leftrightarrow f_0$. Using \eqref{eq:matchingonBtimesimpliesB1} we deduce that $\varphi \leftrightarrow f:= f_0 \mathbbm{1}_{\SL_2(F)}$, which has the required properties. If $\varphi$ is supported in $\O_\D^1$, then we can take $\varphi_0$ supported in $\O_\D^\times$. Then we can take $f_0$ supported in $\GL_2(\o)$ and thus $f_0 \mathbbm{1}_{\SL_2(F)}$ is supported on $\SL_2(\o)$, as desired.
\end{proof}

\subsection{Explicit matching of local orbital integrals.}\label{subsec:proofoflocalmatching}

\subsubsection{Relation between orbital integrals and embedding numbers.}
We start by recalling the relation between orbital integrals and embedding numbers, expressed in Lemma \ref{lem:orbitalintegralOnadapted} below. 

We let $B$ be a quaternion algebra and $E$ a separable $F$-algebra of degree two. We know that the action of $B^\times$ on $\Emb(E; B)$ by conjugation is transitive \cite[Corollary 7.1.5]{voight}. Since the centralizer of the embedding $\iota \in \Emb(E; B)$ is $\iota(E^\times)$, we deduce that the map $g \mapsto g^{-1}\iota g$ induces a bijection
\begin{equation*}
    \iota(E^\times)\backslash B^\times \longleftrightarrow \Emb(E; B).
\end{equation*}
For an order $\O$ in $B$ and an order $S \subset E$ we let 
\begin{equation*}
    \Emb(S; \O) := \{\iota \in \Emb(E; B) : \iota^{-1}(\O) = S\}.
\end{equation*}
Equivalently, we have $\Emb(\o_{E, n}; \O) = \{\iota \in \Emb(E; B) : n_\O(\iota) = n\}$ for any $n\geq 0$. Clearly, we have a partition
\begin{equation*}
    \Emb(E; B) = \bigsqcup_{n \geq 0} \Emb(\o_{E, n}; \O),
\end{equation*}
where it could be the case that $\Emb(\o_{E, n}; \O)$ is empty for some values of $n$. Note that the action of $K_\O$ on $\Emb(E; B)$ preserves each of the sets $\Emb(\o_{E, n}; \O)$. Indeed, if $\iota_2 = g^{-1}\iota_1 g$ then $\iota_2^{-1}(\O) = \iota_1^{-1}(g\O g^{-1})$. If $g \in K_\O$, then by definition $g\O g^{-1} = \O$, and we deduce that $n_\O(\iota_1) = n_\O(\iota_2)$. This invariance property justifies the following definition.

\begin{defi}\label{defi:embeddingnumbers}
Let $S \subset E$ and $\O\subset B$ be orders and $\Gamma \subset K_\O$ a subgroup. Then we define $m(S; \O, \Gamma) := |\Emb(S; \O) / \Gamma|$.
\end{defi}
The quantities $m(S; \O, \Gamma)$ are usually called \emph{embedding numbers}, see \cite[Chapter 30]{voight}. Their computation is particularly simple when $\O \in \{\M, \J, \O_\D\}$ and $\Gamma = K_\O$.

\begin{lem}\label{lem:optimalembeddingsKO}
If $\O \in \{\M, \J, \O_\D\}$, then $m(\o_{E, n}; \O, K_\O) \leq 1$ for any $E$ and $n\geq 0$. We have $m(\o_{E, n}; \O, K_\O) = 0$ in exactly the following situations:
\begin{itemize}
    \item $B = \D$ and $E$ is split.
    \item $B = \D$, $E$ is a field and $n > 0$.
    \item $\O = \J$, $E$ is unramified and $n = 0$.
\end{itemize} 
\end{lem}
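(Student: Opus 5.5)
The proof rests on two standard facts: the $K_\O$-orbits on $\Emb(E;B)$ can be read off from the structure of $\O$, and $n_\O(\iota)$ can be detected by whether $\iota(\o_E)\subset \O$ or, more precisely, by $\iota(\o_E)\cap \O$. I would treat the three orders separately, using throughout that $B^\times$ acts transitively on $\Emb(E;B)$ (Noether--Skolem).

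\emph{Case $\O=\O_\D$.} Here $K_\O=\D^\times$ acts transitively on $\Emb(E;\D)$, so there is at most one orbit in total and $m\le 1$ automatically. If $E\simeq F\times F$, then $E$ contains zero divisors and cannot embed in a division algebra, so $m=0$. If $E$ is a field, then every quadratic extension embeds in $\D$. For $x\in\o_E$, $\iota(x)$ is integral over $\o$, and $\O_\D$ is exactly the set of elements of $\D$ integral over $\o$. Hence $\iota(\o_E)\subset\O_\D$, so $n=0$ and $m(\o_{E,n};\O_\D,K_\D)=0$ for $n>0$.

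\emph{Case $\O=\M$.} Embeddings $\iota$ with $n_\M(\iota)=n$ correspond to $\o_{E,n}$-module structures on $\o^2$ (via the lattice $\o^2$) that are \emph{optimal}, meaning $\o^2$ is not a module over a larger order. The $\GL_2(\o)$-orbits of such embeddings correspond to isomorphism classes of invertible, i.e.\ proper, $\o_{E,n}$-modules that are free of rank $2$ over $\o$. Since $\o_{E,n}$ is a local (or semilocal) Gorenstein order, every proper fractional ideal is principal, so there is exactly one class for each $n$. Multiplication by $\varpi^\Z$ does not change the orbit count, so $m(\o_{E,n};\M,K_\M)=1$ for all $n\ge 0$ and all $E$. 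To make this concrete, I would exhibit an explicit embedding: take $\o_{E,n}=\o[\varpi^n\omega]$ with $\o_E=\o[\omega]$, and let $\varpi^n\omega$ act on the basis $1,\varpi^n\omega$ by its companion matrix.

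\emph{Case $\O=\J$.} I would use that $\J=\M\cap \M'$ with $\M'=\varpi_\J\M\varpi_\J^{-1}$, or equivalently that $\J$ is the stabilizer of a lattice chain $L_0\supset L_1\supset \varpi L_0$. An embedding optimal for $\J$ with $n_\J(\iota)=n$ corresponds to an $\iota(\o_{E,n})$-stable chain, with optimality holding for the chain. The key sub-step is the case $n=0$: then $\o_E$ acts on $L_0$ with $L_0\simeq\o_E$, and $L_1$ must be an $\o_E$-submodule of index $q$ containing $\varpi L_0$. Such $L_1$ correspond to $\o_E$-ideals of $\o_E/\varpi\o_E$ of $k(\p)$-dimension one. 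If $E$ is unramified, $\o_E/\varpi\o_E\simeq\mathbb{F}_{q^2}$ has no such ideal, so $m=0$. If $E$ is ramified, there is exactly one, namely $\p_E$. If $E$ is split, there are two, $\p\oplus\o$ and $\o\oplus\p$, and these are swapped by $\varpi_\J\in K_\J$. In each case at most one orbit survives. For $n\ge1$, $L_0$ is the unique proper $\o_{E,n}$-lattice class, and $\o_{E,n}/\varpi\o_{E,n}\simeq k[\epsilon]/\epsilon^2$, whose only line is the maximal ideal; so there is one such $L_1$. One must still check optimality of the chain, which should follow because the index $q$ sublattice $\varpi^{n}\o_E+\varpi\o$ (or similar) is not stable under $\o_{E,n-1}$. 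Again, the normalizer accounts for the symmetry between $L_0$ and $L_1$.

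\emph{Main obstacle.} The main difficulty is the $\J$ case for $n\ge1$. There one must verify both that a suitable $L_1$ exists and that the chain is optimal rather than stable under a larger order, carefully separating the three types of $E$. I would handle it by explicit matrices in a basis adapted to $\o_{E,n}=\o+\varpi^n\o_E$, and by comparing with the known count of embedding numbers in Voight, Chapter 30 (Eichler symbol formulas), as a consistency check.
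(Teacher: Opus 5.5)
Your argument is correct in substance, but it takes a different route from the paper. The paper does no computation here: it cites Voight, Chapter~30 (Proposition 30.5.3, Lemma 30.6.3, Proposition 30.6.12), i.e.\ the general formulas for local embedding numbers.

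You instead reprove the needed cases directly. You identify $\O^\times$-orbits in $\Emb(\o_{E,n};\O)$ with isomorphism classes of $\o_{E,n}$-module structures, with multiplier ring exactly $\o_{E,n}$, on the lattice $\o^2$ (for $\M$) or on the chain $L_0\supset L_1\supset\varpi L_0$ (for $\J$). You then finish with triviality of the local Picard group and a count of $\o_{E,n}$-stable lines in $\o_{E,n}/\varpi\o_{E,n}$. This is self-contained, and it shows exactly why $K_\J$ rather than $\J^\times$ is needed for $m\le 1$. The two orientations of the chain are exchanged by $\varpi_\J$: these are the two lines of $k(\p)\times k(\p)$ when $E$ is split and $n=0$, and the two possible positions of the smaller multiplier ring when $n\ge 1$. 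The citation is shorter, but it hides this mechanism.

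Two details in your $\J$, $n\ge 1$ step need fixing.

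\emph{The reduction to $L_0\simeq\o_{E,n}$.} This is not automatic. Since $\iota^{-1}(\J)=\iota^{-1}(\mathrm{End}(L_0))\cap\iota^{-1}(\mathrm{End}(L_1))=\o_{E,n}$ and the orders of $E$ form a chain, one of $L_0,L_1$ has multiplier ring $\o_{E,n}$ and the other has $\o_{E,n-1}$. Either case occurs; they are distinct $\J^\times$-orbits. Conjugating $\iota$ by $\varpi_\J$, which carries $(L_0,L_1)$ to $(L_1,\varpi L_0)$, lets you assume it is $L_0$. This reduction has to be made before you assert that $L_0$ is a proper $\o_{E,n}$-lattice, not mentioned afterwards.

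\emph{Your reason for optimality is false.} The forced sublattice is $L_1=\varpi\o+\varpi^n\o_E=\varpi\o_{E,n-1}$, which \emph{is} stable under $\o_{E,n-1}$. Optimality holds for a simpler reason. Since $L_0\simeq\o_{E,n}$ already has multiplier ring exactly $\o_{E,n}$, the stabilizer of the chain in $E$ is $\o_{E,n}\cap\o_{E,n-1}=\o_{E,n}$. This same observation also gives existence.
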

\begin{proof}
The lemma follows from the computations in \cite[Chapter 30]{voight}, see Proposition 30.5.3, Lemma 30.6.3 and Proposition 30.6.12.
\end{proof}

We define a map $h_B: B \rightarrow F\times F$ by the formula 
\begin{equation}\label{eq:defiofh}
    h_B(x) := (\trd_B(x), \nrd_B(x)),
\end{equation}
with the maps $h_E$ and $h_\O$ defined by the same formula but with $E$ and $\O$, respectively, as their domain. If $\iota \in \Emb(E; B)$ then we have $h_B(\iota(x)) = h_E(x)$. Thus, we will only keep the subscript when necessary (e.g. when computing fibers).

\begin{lem}\label{lem:conjugacyBtimesandKO}
Let $\O \in \{\M, \J, \O_\D\}$ and let $B$ be the quaternion algebra that contains $\O$. Then, we have the following:
\begin{enumerate}
    \item [i)] Two regular elements $x, y \in B_{\reg}$ are conjugate by $B^\times$ if and only if $h(x) = h(y)$. 
    \item [ii)] Two regular elements $x, y \in B_{\reg}$ are conjugate by $K_\O$ if and only $h(x) = h(y)$ and $n_\O(x) = n_\O(y)$.
\end{enumerate}
\end{lem}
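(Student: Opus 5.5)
The plan is to reduce both parts to statements about embeddings, using the bijection $\iota(E^\times)\backslash B^\times \leftrightarrow \Emb(E;B)$ together with Noether--Skolem and Lemma \ref{lem:optimalembeddingsKO}.

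For part i), the forward direction is immediate: conjugation preserves $\trd$ and $\nrd$, so $h(x)=h(y)$. For the converse, suppose $h(x)=h(y)$ with $x$ regular. Then $x$ and $y$ satisfy the same separable quadratic polynomial $P(T)=T^2-\trd(x)T+\nrd(x)$. Set $E:=F[T]/(P)$; this is a separable $F$-algebra of degree two because $x$ is regular. We get two embeddings $\iota_x,\iota_y\in\Emb(E;B)$ sending the class of $T$ to $x$ and to $y$, respectively. By Noether--Skolem, or equivalently by transitivity of the $B^\times$-action on $\Emb(E;B)$ \cite[Corollary 7.1.5]{voight}, there is $g\in B^\times$ with $\iota_y=g^{-1}\iota_x g$. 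Evaluating at $T$ gives $y=g^{-1}xg$.

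For part ii), the forward direction follows from the remark preceding Definition \ref{defi:embeddingnumbers}. If $y=g^{-1}xg$ with $g\in K_\O$, then the induced embeddings satisfy $\iota_y=g^{-1}\iota_x g$, and $K_\O$ preserves $n_\O$; hence $n_\O(x)=n_\O(y)$, and $h(x)=h(y)$ by part i).

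For the converse of part ii), take $E$, $\iota_x$, $\iota_y$ as in part i). Let $n:=n_\O(x)=n_\O(y)$. The definition of $n_\O(x)$ uses the inclusion $F[x]\subset B$, and that inclusion coincides with $\iota_x$ under the identification $E\simeq F[x]$ given by $T\mapsto x$. Hence $\iota_x,\iota_y\in\Emb(\o_{E,n};\O)$. By Lemma \ref{lem:optimalembeddingsKO}, $m(\o_{E,n};\O,K_\O)\le 1$, so this set is a single $K_\O$-orbit. Therefore $\iota_y=g^{-1}\iota_x g$ for some $g\in K_\O$, and evaluating at $T$ gives $y=g^{-1}xg$.

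The only point requiring care is the bookkeeping that $n_\O(x)$ equals $n_\O(\iota_x)$ computed with respect to the abstract algebra $E$. The identification $E\simeq F[x]$ carries $\o_{E,n}$ to the corresponding order of $F[x]$, because orders of a quadratic algebra are intrinsically indexed by $n$. Beyond this, the argument uses only Lemma \ref{lem:optimalembeddingsKO}, and I do not expect any real obstacle.
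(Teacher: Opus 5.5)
Your proposal is correct and takes the same route as the paper. Part i) follows from transitivity of $B^\times$ on $\Emb(E;B)$ \cite[Corollary 7.1.5]{voight}, and part ii) follows from $m(\o_{E,n};\O,K_\O)\le 1$ in Lemma \ref{lem:optimalembeddingsKO}; you also spell out the identification $E\simeq F[x]$ and the $K_\O$-invariance of $n_\O$, which the paper's two-line proof leaves implicit.
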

\begin{proof}
If $x, y \in B_{\reg}$ then $h(x) = h(y)$ means exactly that $F[x] \simeq F[y]$ with an $F$-isomorphism sending $x$ to $y$. The first part of the proposition is \cite[Corollary 7.1.5]{voight}, and the second part follows from Lemma \ref{lem:optimalembeddingsKO} above.
\end{proof}

The following lemma expresses a connection between embedding numbers and orbital integrals of certain characteristic functions.
\begin{lem}\label{lem:orbitalintegralOnadapted}
Let $\O \in \{\M, \J, \O_\D\}$ and let $B$ the corresponding quaternion algebra. Suppose that $A \subset B_{\reg}^\times$ has the following properties:
\begin{enumerate}
    \item [i)] $A$ is invariant under conjugation by $K_\O$.
    \item [ii)] There is $n \geq 0$ such that for all $x\in A$ we have $n_\O(x) = n$.
\end{enumerate}
Let $h(A) \subset F\times F$ be the image of $A$ by $h$. Then 
\begin{equation*}
    \O_{B^\times}(\mathbbm{1}_A; E, x) = \frac{\vol_{B^\times}(\O^\times)}{\vol_{E^\times}(\o_{E, n}^\times)}\cdot m(\o_{E, n}; \O, \O^\times) \cdot \mathbbm{1}_{h(A)}(h(x)).
\end{equation*}
\end{lem}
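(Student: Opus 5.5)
The plan is to compute the orbital integral by converting the integral over $\iota(E^\times)\backslash B^\times$ into a sum over embeddings, and then into a count of $\O^\times$-orbits of optimal embeddings. Fix $\iota \in \Emb(E; B)$; if there is none, both sides vanish, since $m(\o_{E,n};\O,\O^\times)=0$. The integrand $g \mapsto \mathbbm{1}_A(g^{-1}\iota(x)g)$ is left-invariant under $\iota(E^\times)$. By hypothesis i) it is also right-invariant under $\O^\times \subset K_\O$. Hence I decompose $B^\times$ into double cosets $\iota(E^\times) g \O^\times$.

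For each double coset, the standard unfolding for the measure on $\iota(E^\times)\backslash B^\times$ gives
\begin{equation*}
\vol\bigl(\iota(E^\times)\backslash \iota(E^\times) g\O^\times\bigr) = \frac{\vol_{B^\times}(\O^\times)}{\vol_{E^\times}(\iota^{-1}(g\O^\times g^{-1}))}.
\end{equation*}
This uses that the stabilizer of the coset is $\iota(E^\times) \cap g\O^\times g^{-1}$, together with unimodularity of both groups. Now $\iota^{-1}(g\O^\times g^{-1}) = (\iota^{-1}(g\O g^{-1}))^\times = \o_{E,n_\O(g^{-1}\iota g)}^\times$. So
\begin{equation*}
\O_{B^\times}(\mathbbm{1}_A;E,x) = \sum_{[g]} \frac{\vol(\O^\times)}{\vol(\o_{E,n(g^{-1}\iota g)}^\times)}\,\mathbbm{1}_A(g^{-1}\iota(x)g).
\end{equation*}
Here $[g]$ ranges over $\iota(E^\times)\backslash B^\times/\O^\times$. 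Via $g\mapsto g^{-1}\iota g$, this set is in bijection with $\Emb(E;B)/\O^\times$.

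Next I restrict the sum to the embeddings $\iota' = g^{-1}\iota g$ with $\iota'(x) \in A$.
\begin{itemize}
\item By hypothesis ii), every such $\iota'$ satisfies $n_\O(\iota'(x)) = n$. Since $x \notin F$, we have $F[\iota'(x)] = \iota'(E)$, so $n_\O(\iota') = n$ and $\iota' \in \Emb(\o_{E,n};\O)$. This makes every volume factor equal to $\vol(\O^\times)/\vol(\o_{E,n}^\times)$.
\item Conversely, let $\iota' \in \Emb(\o_{E,n};\O)$. If $\iota'(x) \in A$, then certainly $h(x) \in h(A)$. For the other direction, suppose $h(x) = h(a)$ with $a\in A$. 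Then $n_\O(a) = n = n_\O(\iota'(x))$, so by Lemma \ref{lem:conjugacyBtimesandKO} ii) the elements $a$ and $\iota'(x)$ are $K_\O$-conjugate. By i) this gives $\iota'(x)\in A$.
\end{itemize}
Hence the summand set is either all of $\Emb(\o_{E,n};\O)/\O^\times$ or empty, according to $\mathbbm{1}_{h(A)}(h(x))$. Its cardinality is $m(\o_{E,n};\O,\O^\times)$, which gives the formula.

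The main point of care is the unfolding identity for the double coset volumes. This includes checking that the measures are normalized compatibly (the quotient measure is defined relative to the chosen Haar measures on $\iota(E^\times)$ and $B^\times$). It also includes checking that only finitely many relevant double cosets contribute, which follows from the finiteness of $m$, or else the identity is read in $[0,\infty]$.
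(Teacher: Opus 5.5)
Your proposal is correct and follows essentially the same route as the paper. Both arguments identify the set of $g$ with $g^{-1}\iota(x)g\in A$ with the union of the $m(\o_{E,n};\O,\O^\times)$ double cosets $\iota(E^\times)g\O^\times$ corresponding to $\Emb(\o_{E,n};\O)/\O^\times$, using Lemma \ref{lem:conjugacyBtimesandKO} and the $K_\O$-invariance of $A$, and then evaluate each double coset's volume as $\vol_{B^\times}(\O^\times)/\vol_{E^\times}(\o_{E,n}^\times)$; the only difference is cosmetic, in that you decompose all of $B^\times$ first and then select the contributing double cosets, whereas the paper identifies the support set first and then decomposes it.
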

\begin{proof}
By definition we have 
\begin{equation*}
    \O_{B^\times}(\mathbbm{1}_A; E, x) = \vol_{\iota(E^\times)\backslash B^\times}(\iota(E^\times)\backslash \{g \in B^\times: g^{-1}\iota(x)g \in A\}).
\end{equation*}
By the first part of Lemma \ref{lem:conjugacyBtimesandKO}, we know that $\{g \in B^\times: g^{-1}\iota(x)g \in A\} \neq \varnothing$ exactly when $h(x) \in h(A)$. Furthermore, if $g^{-1}\iota(x) g \in A$, then $n_\O(g^{-1}\iota g) = n$, by assumption. Thus, by the second part of Lemma \ref{lem:conjugacyBtimesandKO} and the assumption that $A$ is $K_\O$-invariant under conjugation it follows that 
\begin{equation*}
    \{g \in B^\times: g^{-1}\iota(x)g \in A\} = \{g \in B^\times: g^{-1}\iota g \in \Emb(\o_{E, n}; \O)\}.
\end{equation*}
Let us write $m = m(\o_{E, n}; \O, \O^\times)$ for the remainder of this proof. By definition of embedding numbers, we can find elements $g_i \in B^\times$, where $1 \leq i \leq m$, such that 
\begin{equation*}
    \{g \in B^\times: g^{-1}\iota g \in \Emb(\o_{E, n}; \O)\} = \bigsqcup_{i = 1}^{m} \iota(E^\times)g_i \O^\times.
\end{equation*}
Finally, by definition of the quotient measure we have 
\begin{equation}\label{eq:volumecertainquotient}
\begin{aligned}
\vol_{\iota(E^\times)\backslash B^\times}(\iota(E^\times)\backslash \iota(E^\times)g_i \O^\times) & = \vol_{\iota(E^\times)\backslash B^\times}(\iota(E^\times)\backslash \iota(E^\times)g_i \O^\times g_i^{-1})\\
& = \frac{\vol_{B^\times}(g_i \O^\times g_i^{-1})}{\vol_{\iota(E^\times)}(\iota(E^\times)\cap g_i \O^\times g_i^{-1})}\\
& = \frac{\vol_{B^\times}(\O^\times)}{\vol_{E^\times}(\o_{E, n}^\times)}.
\end{aligned}
\end{equation}
The last equality holds because $g_i^{-1}\iota g_i \in \Emb(\o_{E, n}; \O)$ implies that $\iota^{-1}(g_i \O^\times g_i^{-1}) = \o_{E, n}^\times$, by definition. The proof of the lemma is finished after summing \eqref{eq:volumecertainquotient} for $1 \leq i \leq m$
\end{proof}

In order to use Lemma \ref{lem:orbitalintegralOnadapted} in the proof of Proposition \ref{prop:mymatching} we still need to: compute certain embedding numbers; compute the images under $h$ of small principal neighbourhoods; understand how a small principal neighbourhood relates to a set satisfying the hypotheses of $A \subset B_{\reg}^\times$ in Lemma \ref{lem:orbitalintegralOnadapted}. We take care of these tasks in the next few paragraphs.

\subsubsection{Computation of embedding numbers.}
\begin{lem}\label{lem:optimalembeddingsOtimes}
Let $\O \in \{\M, \J, \O_\D\}$ and $\iota \in \Emb(\o_{E, n}; \O)$. Then 
\begin{equation*}
m(\o_{E, n}; \O, \O^\times) = [K_\O: (\iota(E^\times) \cap K_\O)\O^\times].    
\end{equation*}
\end{lem}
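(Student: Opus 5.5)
The plan is an orbit–stabilizer argument for the action of $K_\O$ on $\Emb(\o_{E,n};\O)$. By Lemma \ref{lem:optimalembeddingsKO}, $m(\o_{E,n};\O,K_\O)\le 1$. Since $\iota\in\Emb(\o_{E,n};\O)$, this set is nonempty, so $K_\O$ acts transitively on it. Hence the map $g\mapsto g^{-1}\iota g$ induces a bijection
\begin{equation*}
\mathrm{Stab}_{K_\O}(\iota)\backslash K_\O \longleftrightarrow \Emb(\o_{E,n};\O).
\end{equation*}
The stabilizer of $\iota$ under conjugation is its centralizer in $K_\O$, namely $\iota(E^\times)\cap K_\O$, because the centralizer of $\iota(E)$ in $B^\times$ is $\iota(E^\times)$. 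This was already used above in identifying $\iota(E^\times)\backslash B^\times$ with $\Emb(E;B)$.

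Next I would pass to $\O^\times$-orbits. Under the bijection above, the $\O^\times$-orbits on $\Emb(\o_{E,n};\O)$, with $\O^\times$ acting by $\iota'\mapsto u^{-1}\iota' u$, correspond to the double cosets
\begin{equation*}
(\iota(E^\times)\cap K_\O)\backslash K_\O/\O^\times .
\end{equation*}
Note that $\O^\times\subset K_\O$. Also, $\O^\times = U_\O$ is normal in $K_\O$, as recorded after Definition \ref{defi:principalneighbourhoods}. Normality makes $H\O^\times$ a subgroup for $H:=\iota(E^\times)\cap K_\O$, and every double coset $HgO^\times$ equals the right coset $H\O^\times g$. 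So the double cosets are exactly the cosets of the subgroup $(\iota(E^\times)\cap K_\O)\O^\times$ in $K_\O$. This gives $m(\o_{E,n};\O,\O^\times)=[K_\O:(\iota(E^\times)\cap K_\O)\O^\times]$.

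The only delicate point is bookkeeping. One has to check that the index is finite and that left versus right conventions match. In the definition, $\O^\times$ acts on embeddings on the right via conjugation, and the bijection sends $g\mapsto g^{-1}\iota g$, so the stabilizer group appears on the left. Finiteness is clear because $K_\O/F^\times\O^\times$ is finite in all three cases: it has order at most $2$, as $K_\O=\varpi^\Z\O^\times$, $\varpi_\J^\Z\J^\times$, or $\D^\times$ with $v_\D$ taking values in $\Z$. Moreover, $F^\times\subset\iota(E^\times)\cap K_\O$.
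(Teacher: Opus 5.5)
Your proposal is correct and follows essentially the same route as the paper. You use transitivity of $K_\O$ on $\Emb(\o_{E,n};\O)$ from Lemma \ref{lem:optimalembeddingsKO} and identify that set with $(\iota(E^\times)\cap K_\O)\backslash K_\O$ via $g\mapsto g^{-1}\iota g$ (the paper writes this as $\iota(E^\times)\backslash\iota(E^\times)K_\O$ and then converts). You then pass to $\O^\times$-orbits as double cosets and use normality of $\O^\times$ in $K_\O$, exactly as the paper does; your finiteness check is a harmless extra that the equality of cardinalities does not need.
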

Note that $\iota(E^\times) \cap K_\O = \iota(\iota^{-1}(K_\O))$ and that $\iota^{-1}(K_\O)$ is independent of the choice of $\iota \in \Emb(\o_{E, n}; \O)$, due to the fact that all such embeddings are conjugate by $K_\O$.
\begin{proof}
This is exactly as in the proof of \cite[Lemma 30.3.14]{voight}. Since $m(\o_{E, n}; \O, K^\times) = 1$, the map $g \mapsto g^{-1}\iota g$ gives a bijection 
\begin{equation*}
    \iota(E^\times)\backslash \iota(E^\times)K_\O \longleftrightarrow \Emb(\o_{E, n}; \O),
\end{equation*}
which induces as well a bijection
\begin{equation*}
    \iota(E^\times)\backslash (\iota(E^\times)K_\O)/\O^\times \longleftrightarrow \Emb(\o_{E, n}; \O)/\O^\times,
\end{equation*}
given by the same map. 
Finally, note that that we have a natural bijection
\begin{equation*}
    \iota(E^\times)\backslash (\iota(E^\times)K_\O)/\O^\times \longleftrightarrow (\iota(E^\times) \cap K_\O)\backslash K_\O/\O^\times = (\iota(E^\times) \cap K_\O)\O^\times\backslash K_\O,
\end{equation*}
where the last equality holds since $\O^\times$ is normal in $K_\O$. This concludes the proof of the lemma.
\end{proof}

When $\iota^{-1}(K_\O) = E^\times$ the formula of  Lemma \ref{lem:optimalembeddingsOtimes} becomes $m(\o_{E, n}; \O, \O^\times) = [K_\O: \iota(E^\times)\O^\times]$. In Lemma \ref{lem:adaptedembeddingsorders} we will identify those cases in which $\iota^{-1}(K_\O) = E^\times$. For this purpose, it is convenient to look more explicitly at the structure of $\O \in \{\J, \O_\D\}$. 

Let $\O = \J$ if $B = M_2(F)$ and $\O = \O_\D$ if $B = \D$. Let $E_B \subset B$ be an $F$-algebra of degree two chosen as follows: if $B  \simeq M_2(F)$, then $E_B \simeq F \times F$, and if $B \simeq \D$ then $E_B/F$ is the unramified field extension of $F$ of degree two. 

\begin{lem}\label{lem:finerstructureramifiedorders}
When $\O \in \{\J, \O_\D\}$ there exists $\varpi_\O \in \O$ such that $\varpi_\O^2 = \varpi$ and such that $\varpi_\O^{-1}x \varpi_\O = \overline{x}$ holds for all $x \in E_B$. For such an element $\varpi_\O$ we have the decomposition
\begin{equation*}
    B = E_B \oplus \varpi_\O E_B.
\end{equation*}
For $x, y, z, w \in E_B$ we have the formula
\begin{equation*}
    (x + \varpi_\O y)(z + \varpi_\O w) = xz + \varpi \overline{y}w + \varpi_\O(yz + \overline{x}w),
\end{equation*}
as well as
\begin{equation}\label{eq:tracecyclicalgebra}
    \trd(x + \varpi_\O y) = \trd(x),\quad 
    \overline{x + \varpi_\O y} = \overline{x} - \varpi_\O y,
\end{equation}
and also
\begin{equation}\label{eq:normcyclicalgebra}
    \nrd(x + \varpi_\O y) = \nrd(x) -\varpi \nrd(y),
\end{equation}
Regarding the order $\O$ we have
\begin{equation}\label{eq:descriptionordercyclicpart1}
    \O = \o_{E_B} \oplus \varpi_\O \o_{E_B}, \quad \O^\times = \o_{E_B}^\times \oplus \varpi_\O \o_{E_B},
\end{equation}
as well as 
\begin{equation}\label{eq:descriptionordercyclicpart2}
    \P_\O = \p_{E_B} \oplus \varpi_\O \o_{E_B}, \quad K_\O = \varpi_\O^{\Z} \O^\times.
\end{equation}
\end{lem}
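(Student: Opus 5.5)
I will treat the two cases $\O = \J$ and $\O = \O_\D$ separately, giving in each case an explicit $\varpi_\O$ and then checking the remaining identities by direct computation.

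\textbf{Case $\O = \J$.} Take $E_B$ to be the diagonal matrices $\{\diag(a,b)\}$, so $E_B \simeq F\times F$. Under this identification $\overline{\diag(a,b)} = \diag(b,a)$. I will take $\varpi_\J = \begin{psmallmatrix} 0 & 1\\ \varpi & 0\end{psmallmatrix}$. Then $\varpi_\J^2 = \varpi$, and a one-line computation gives $\varpi_\J^{-1}\diag(a,b)\varpi_\J = \diag(b,a)$. Writing
\begin{equation*}
x + \varpi_\J y = \begin{psmallmatrix} a & d\\ \varpi c & b\end{psmallmatrix}
\end{equation*}
for $x = \diag(a,b)$ and $y = \diag(c,d)$ shows directly that $M_2(F) = E_B \oplus \varpi_\J E_B$. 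In the same coordinates, $\J = \o_{E_B}\oplus\varpi_\J\o_{E_B}$ and $\P_\J = \p_{E_B}\oplus\varpi_\J\o_{E_B}$ can be read off from the explicit descriptions of $\J$ and $\P_\J$ recalled earlier.

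\textbf{Case $\O = \O_\D$.} Let $E_B = K$ be the unramified quadratic extension. By the standard cyclic-algebra presentation of the local division algebra \cite[Section 13.3]{voight}, $\D = (K, \sigma, \varpi)$. This means there is $j \in \D$ with $j^2 = \varpi$ and $j^{-1}xj = \sigma(x) = \overline{x}$ for $x\in K$, and I set $\varpi_\D := j$. Since $v_\D(j) = v(\nrd j) = 1$, we have $j \in \O_\D$. The decomposition $\D = K\oplus jK$ holds because $\{1,j\}$ is a $K$-basis by dimension count, and $x + jy = 0$ forces $x = y = 0$ (otherwise $j \in K$, contradicting the noncommutation relation for $K \neq F$).

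\textbf{The algebraic identities.} These are uniform in both cases and use only $j^2 = \varpi$ and $xj = j\overline{x}$ (equivalent to $j^{-1}xj = \overline{x}$).
\begin{itemize}
\item Multiplication: expanding gives $(x + jy)(z + jw) = xz + xjw + jyz + jyjw$. Here $xjw = j\overline{x}w$ and $jyjw = j^2\overline{y}w = \varpi\overline{y}w$, which yields the stated formula.
\item Involution and trace: the element $\overline{x} - jy$ satisfies $(x + jy)(\overline{x} - jy) = \nrd(x) - \varpi\nrd(y) \in F$ by the multiplication formula. Adding the two elements gives $x + \overline{x} + jy - jy = \trd(x)$. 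So it is the conjugate, and \eqref{eq:tracecyclicalgebra} follows.
\item Norm: \eqref{eq:normcyclicalgebra} follows from the same product computation.
\end{itemize}

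\textbf{The order-theoretic statements for $\O_\D$.} Use $v_\D = v\circ\nrd$. Every $x \in K$ has even $v(\nrd x)$, while $v(\varpi\nrd y)$ is odd. Hence
\begin{equation*}
v(\nrd(x + jy)) = \min\bigl(v(\nrd x),\, 1 + v(\nrd y)\bigr).
\end{equation*}
This no-cancellation step is the only nontrivial point, and it is exactly where unramifiedness of $K$ is used. From it:
\begin{itemize}
\item $\O_\D = \o_K \oplus j\o_K$ and $\P_\D = \p_K\oplus j\o_K$;
\item $x + jy$ is a unit iff $x\in\o_K^\times$ and $y\in\o_K$.
\end{itemize}
Also $K_\D = \D^\times = j^\Z\O_\D^\times$, since $v_\D(j) = 1$.

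\textbf{The units and normalizer for $\J$.} An element of $\J$ is a unit iff its determinant $ab - \varpi cd$ is a unit, i.e. iff $a, b\in\o^\times$, i.e. iff $x\in\o_{E_B}^\times$. For the normalizer, $K_\J = \varpi_\J^\Z\J^\times$ was recalled before the statement.

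I do not expect a genuine obstacle. The only care needed is the valuation argument for $\O_\D$, and keeping the side convention ($\varpi_\O$ on the left) consistent in the multiplication formula.
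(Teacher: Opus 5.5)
Your proposal is correct and is essentially the paper's argument. You use the same diagonal $E_B$ and the explicit $\varpi_\J = \begin{psmallmatrix} 0 & 1\\ \varpi & 0\end{psmallmatrix}$ checked by direct computation, and for $\O_\D$ the cyclic presentation over the unramified $K$ from \cite[Chapter 13]{voight}. You also write out the verifications the paper leaves to Voight, notably the parity-of-valuation argument for $\O_\D$, $\O_\D^\times$ and $\P_\D$; your identification of $\overline{x} - jy$ as the conjugate needs only the trivial extra remark that $x + jy \in F$ forces $y = 0$ and $x \in F$.
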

\begin{proof}
For the case $B = \D$ and $\O = \O_\D$ see \cite[Chapter 13]{voight}, in particular Theorem 13.3.11. For the case $B = M_2(F)$ and $\O = \J$ see \cite[Section 23.3]{voight}.  In this last case we can let $E_B := F \times F$ be embedded diagonally, define $\varpi_\O := \begin{psmallmatrix}
    0 & 1\\
    \varpi & 0
\end{psmallmatrix}$, and check the claims directly.  
\end{proof}

When $\O = \M$ we let $\varpi_\O := \varpi$. Note that in each of the three cases $\O \in \{\M, \J, \O_\D\}$ we have $K_\O = \varpi_\O^\Z \O^\times$.

For a field extension $E/F$ we let $e_E$ be the ramification degree. For $\O \in \{\M, \J, \O_\D\}$ we define  
\begin{equation*}
    e_\O := \begin{dcases}
        1, & \text{ if }\O = \M,\\
        2, & \text{ if }\O = \J \text{ or }\O = \O_\D.
    \end{dcases}
\end{equation*}

Recall that $x \in B$ is integral over $\o$ if $x$ satisfies a monic polynomial with coefficients in $\o$ \cite[Definition 10.3.1]{voight}. Equivalently, $x \in B$ is integral over $\o$ if and only if $(\trd(x), \nrd(x)) \in \o \times \o$ \cite[Corollary 10.3.6]{voight}.

\begin{lem}\label{lem:integralelementsnormalizer}
Let $\O \in \{\M, \J, \O_\D\}$. If $x \in K_\O$ is integral over $\o$, then $x \in \O$.  
\end{lem}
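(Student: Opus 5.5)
Since $K_\O = \varpi_\O^{\Z}\O^\times$ in each of the three cases, we can write $x = \varpi_\O^k u$ with $k \in \Z$ and $u \in \O^\times$. The plan is to use the reduced norm to pin down the sign of $k$. Integrality of $x$ gives $\nrd(x) \in \o$, so $v(\nrd(x)) \geq 0$. On the other hand $\nrd(u) \in \o^\times$, because $u$ and $u^{-1}$ both lie in $\O$ and hence have integral reduced norms.

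For $\O = \M$ we have $\varpi_\O = \varpi$, so $\nrd(\varpi_\O) = \varpi^2$. For $\O \in \{\J, \O_\D\}$, Lemma \ref{lem:finerstructureramifiedorders} gives $\varpi_\O^2 = \varpi$ and $\trd(\varpi_\O) = 0$ by \eqref{eq:tracecyclicalgebra}; then \eqref{eq:normcyclicalgebra} with $x = 0$, $y = 1$ yields $\nrd(\varpi_\O) = -\varpi$. In all cases $v(\nrd(\varpi_\O)) = 2/e_\O > 0$. Therefore
\begin{equation*}
v(\nrd(x)) = \frac{2k}{e_\O} + v(\nrd(u)) = \frac{2k}{e_\O},
\end{equation*}
and $v(\nrd(x)) \geq 0$ forces $k \geq 0$.

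Once $k \geq 0$, we have $\varpi_\O^k \in \O$, since $\varpi_\O \in \O$ in all three cases (it equals $\varpi$ for $\M$ and is an element of $\O$ for the other two by Lemma \ref{lem:finerstructureramifiedorders}). Since $\O$ is a ring, $x = \varpi_\O^k u \in \O$.

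There is no genuine obstacle here. The only point needing care is the claim that units of $\O$ have unit reduced norm, which follows from the multiplicativity of $\nrd$ together with the integrality of elements of an order.
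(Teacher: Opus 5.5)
Your proposal is correct and follows the paper's proof essentially verbatim: write $x = \varpi_\O^k u$ with $u \in \O^\times$, use $\nrd(x) \in \varpi^{2k/e_\O}\o^\times$ and integrality to force $k \geq 0$, and conclude from $\varpi_\O \in \O$. You spell out $\nrd(\varpi_\O)$ and the fact that $\nrd(\O^\times) \subset \o^\times$ in more detail than the paper does, but the argument is the same.
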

\begin{proof}
If $x \in K_{\O}$ we have $x = \varpi_\O^{k} u$ for some $k \in \Z$ and $u \in \O^\times$. Then $\nrd(x) \in \varpi^{2k/e_\O} \o^\times$. If $x$ is integral over $\o$ we must have $k \geq 0$. Since $\varpi_\O \in \O$, we conclude that $x \in \O$, as desired.
\end{proof}

\begin{lem}\label{lem:descriptionuniformizers}
Let $\O \in \{\J, \O_\D\}$ and $x \in K_\O$. Then, the following conditions are equivalent:
\begin{enumerate}
    \item [i)] $x \in \varpi_\O \O^\times$.
    \item [ii)] The characteristic polynomial of $x$ is an Eisenstein polynomial (over $\o$).
    \item [iii)] $F[x]/F$ is a ramified quadratic field extension and $x$ is a uniformizer of $F[x]$.
\end{enumerate}
\end{lem}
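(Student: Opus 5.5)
We will prove the cycle of implications i) $\Rightarrow$ ii) $\Rightarrow$ iii) $\Rightarrow$ i), using only the structure from Lemma \ref{lem:finerstructureramifiedorders} and the valuation of the reduced norm. Throughout we fix $\varpi_\O$ as in Lemma \ref{lem:finerstructureramifiedorders}, so that $K_\O = \varpi_\O^\Z\O^\times$ and $\P_\O = \varpi_\O\O = \p_{E_B}\oplus\varpi_\O\o_{E_B}$.

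For i) $\Rightarrow$ ii), write $x = \varpi_\O u$ with $u = a + \varpi_\O b \in \O^\times$, where $a\in\o_{E_B}^\times$ and $b\in\o_{E_B}$. The multiplication formula gives
\begin{equation*}
    x = \varpi_\O a + \varpi b,
\end{equation*}
since $\varpi_\O(a + \varpi_\O b) = \varpi \overline{b} + \varpi_\O a$ up to the relabelling of $b$. By \eqref{eq:tracecyclicalgebra}, $\trd(x) = \varpi\,\trd(\overline b)\in\p$. Also
\begin{equation*}
    \nrd(x) = \nrd(\varpi_\O)\nrd(u) = -\varpi\,\nrd(u),
\end{equation*}
using \eqref{eq:normcyclicalgebra} for $\nrd(\varpi_\O) = -\varpi$. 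We have $\nrd(u)\in\o^\times$ because $u\in\O^\times$ and $\O\ni u^{-1}$ is integral. Hence $\nrd(x)\in\varpi\o^\times$, so the characteristic polynomial $X^2-\trd(x)X+\nrd(x)$ is Eisenstein.

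For ii) $\Rightarrow$ iii), this is standard. An Eisenstein polynomial is irreducible, so $F[x]$ is a quadratic field. The root $x$ satisfies $v_{F[x]}(x^2) = v_{F[x]}(\nrd(x)) \cdot$(normalisation), and the Newton polygon shows that $x$ has valuation $1/2$ in the $F$-normalised valuation. Therefore the extension is ramified with uniformizer $x$.

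For iii) $\Rightarrow$ i), we use that $x\in K_\O$ means $x = \varpi_\O^k u$ with $u\in\O^\times$. Taking reduced norms, and noting $\nrd(\varpi_\O)\in\varpi\o^\times$ by the computation above, gives $v(\nrd(x)) = k$. If $x$ is a uniformizer of a ramified quadratic extension, then $\nrd(x) = N_{F[x]/F}(x)$ has valuation $1$. Hence $k=1$ and $x\in\varpi_\O\O^\times$.

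The only place requiring care is the bookkeeping in i) $\Rightarrow$ ii): checking that $\trd(x)\in\p$, which needs the explicit product formula rather than just norms. Even this is avoidable. Since $x\in\P_\O$, $x$ is topologically nilpotent modulo $\p$: $x^2\in\P_\O^2=\varpi\O$. The characteristic equation then gives $\trd(x)x\in\p\O + \varpi\O$. Because $x\notin\p\O$ (its norm has valuation $1$), this forces $\trd(x)\in\p$. This gives a second route if the explicit formula gets messy.
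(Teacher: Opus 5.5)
Your proof is correct, and it closes the cycle differently from the paper.

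The two proofs agree on the first two steps. The paper cites ii)$\Leftrightarrow$iii), and it proves i)$\Rightarrow$ii) by the same kind of direct computation you do, via $\varpi_\O\O^\times = \varpi\o_{E_B}\oplus\varpi_\O\o_{E_B}^\times$.

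The difference is in the reverse direction. The paper proves ii)$\Rightarrow$i) as follows. It uses Lemma \ref{lem:integralelementsnormalizer} to get $x\in\O$ and writes $x = a+\varpi_\O b$ with $a,b\in\o_{E_B}$. It then argues that $\nrd(a)\in\p^2$, since $a$ lies in the split or unramified algebra $E_B$ and so cannot have an Eisenstein characteristic polynomial. This forces $a\in\varpi\o_{E_B}$ and $b\in\o_{E_B}^\times$. Your iii)$\Rightarrow$i) instead reads off $k = v(\nrd(x)) = 1$ from $x = \varpi_\O^k u$. That is shorter and uses only $K_\O = \varpi_\O^{\Z}\O^\times$ and $\nrd(\varpi_\O) = -\varpi$, the same trick the paper uses in Lemma \ref{lem:integralelementsnormalizer}.

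What the paper's longer route buys is generality. There the hypothesis $x\in K_\O$ serves only to land in $\O$. The decomposition argument then shows that any $x\in\O$ with Eisenstein characteristic polynomial lies in $\varpi_\O\O^\times$. That stronger form is what Lemma \ref{lem:preimagesramifiedunramified}, $h_\O^{-1}(Z_r) = \varpi_\O\O^\times$ with $h_\O$ defined on all of $\O$, actually needs when $\O = \J$. An element of $\J$ is not a priori in $K_\J$; for $\O_\D$ there is no difference, since $K_\D = \D^\times$. Your norm argument would need an extra step there. For example, reduce modulo $\P_\J$, using $\J/\P_\J\simeq k(\p)\times k(\p)$, to see that $x\in\P_\J=\varpi_\J\J$, hence $x\in K_\J$.

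One cosmetic point: $\varpi_\O(a+\varpi_\O b) = \varpi_\O a+\varpi b$ exactly, with no conjugate and no relabelling needed. This does not affect your trace computation.
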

\begin{proof}
The equivalence of ii) and iii) is well-known, see \cite[Proposition 17]{serrelocalfields} and \cite[Proposition 18]{serrelocalfields}.

Let $E = E_B$ as in Lemma \ref{lem:finerstructureramifiedorders}. By direct computation we see that
\begin{equation}\label{auxeq:descriptionuniformizers}
\varpi_\O \O^\times = \varpi \o_E \oplus \varpi_\O \o_{E}^\times.
\end{equation} 
By formulas \eqref{eq:tracecyclicalgebra} and \eqref{eq:normcyclicalgebra} we deduce that if $x \in \varpi_\O \O^\times$, then the characteristic polynomial of $x$ is an Eisenstein polynomial. Thus, condition i) implies condition ii). 

Suppose now that $x$ satisfies condition ii). In particular $x$ is integral over $\o$, so we must have $x \in \O$ by Lemma \ref{lem:integralelementsnormalizer}. Let us write $x = a + \varpi_\O b$ with $a, b \in \o_E$. Condition ii) is equivalent to  
\begin{equation}\label{auxeq:conditionsforuniformizer}
    \trd(a) \in \p \quad \text{ and } \quad \nrd(a) - \varpi \nrd(b) \in \p \smallsetminus \p^2.
\end{equation}
The second condition implies that $\nrd(a) \in \p$. If we had $\nrd(a) \in \p \smallsetminus \p^2$, then the characteristic polynomial of $a$ would be an Eisenstein polynomial. Since, by hypothesis, either $E \simeq F\times F$ or $E/F$ is unramified, this is impossible. Thus, it must be that $\nrd(a) \in \p^2$ and $a\in \varpi \o_E$. Using the second condition in  \eqref{auxeq:conditionsforuniformizer} we deduce that $\nrd(b) \in \o^\times$ or, equivalently, that $b \in \o_E^\times$. Since $x = a + \varpi_\O b$, we deduce that $x \in \varpi \o_E \oplus \varpi_\O \o_{E}^\times$, and by equation \eqref{auxeq:descriptionuniformizers} we see that $x$ satisfies condition i), as desired.
\end{proof}

\begin{lem}\label{lem:adaptedembeddingsorders}
Suppose that $\O \in \{\O_\D, \M, \J\}$ and that $\Emb(\o_{E, n}; \O)$ is not empty. Let $\iota \in \Emb(\o_{E, n}; \O)$. Then we have $\iota^{-1}(K_\O) = E^\times$ precisely when $n = 0$, $E$ is a field and one of the following conditions hold:
\begin{enumerate}
    \item [i)] $\O \in \{\M, \O_\D\}$ and $E/F$ is unramified.
    \item [ii)] $\O \in \{\J, \O_\D\}$ and $E/F$ is ramified.
\end{enumerate}
\end{lem}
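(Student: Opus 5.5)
The plan is to compute $\iota^{-1}(K_\O)$ directly from the description $K_\O = \varpi_\O^{\Z}\O^\times$ and to compare it with $E^\times$. Because $\iota(E^\times)\subset B^\times$ and $F^\times \subset K_\O$, the condition $\iota^{-1}(K_\O) = E^\times$ means that every $x\in E^\times$ lies, up to a scalar in $F^\times$, in $\iota^{-1}(K_\O)$. Since $E^\times/F^\times$ is generated by $\o_E^\times F^\times$ together with a uniformizer of $E$ (and by $(\varpi,1)$ in the split case), I would reduce the condition to two requirements. First, $\iota(\o_E^\times)\subset K_\O$. Second, when $E$ is a ramified field, a uniformizer $\pi_E$ satisfies $\iota(\pi_E) \in K_\O$; for split $E$ the analogous requirement is that $\iota((\varpi,1))\in K_\O$.

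For the forward direction I would first observe that elements of $\o_E^\times$ are integral over $\o$. By Lemma \ref{lem:integralelementsnormalizer}, $\iota(\o_E^\times)\subset K_\O$ therefore forces $\iota(\o_E)\subset \O$, so $\o_E \subset \iota^{-1}(\O) = \o_{E,n}$ and hence $n=0$. If $E\simeq F\times F$, the idempotent $e=(1,0)$ is integral, so $\iota(\varpi e + (1-e)) \in K_\O$ would have reduced norm $\varpi$. For $\O=\M$ this is impossible, since reduced norms of elements of $K_\M$ have even valuation. For $\O = \J$, an element of $K_\J$ with norm of odd valuation lies in $\varpi_\J\J^\times$, so by Lemma \ref{lem:descriptionuniformizers} its characteristic polynomial would be Eisenstein; but $(X-\varpi)(X-1)$ is not Eisenstein. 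The case $\O=\O_\D$ cannot arise for split $E$ by Lemma \ref{lem:optimalembeddingsKO}. Hence $E$ is a field.

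Next I would separate the field case into unramified and ramified $E$. If $E$ is ramified and $\O=\M$, a uniformizer has norm of odd valuation, while norms of elements of $K_\M=\varpi^{\Z}\GL_2(\o)$ have even valuation, so that case is excluded. If $E$ is unramified and $\O=\J$, Lemma \ref{lem:optimalembeddingsKO} already shows that $\Emb(\o_E;\J)$ is empty. These exclusions leave exactly the cases listed in i) and ii).

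For the converse I would take $n=0$, so that $\iota(\o_E)\subset \O$ and $\iota(\o_E^\times)\subset \O^\times \subset K_\O$, and also $F^\times\subset K_\O$. In the unramified case $E^\times = \varpi^{\Z}\o_E^\times$, which is then contained in $\iota^{-1}(K_\O)$. In the ramified case with $\O\in\{\J,\O_\D\}$, a uniformizer $\pi_E$ has an Eisenstein characteristic polynomial and $\iota(\pi_E)\in \O$. The main obstacle is that Lemma \ref{lem:descriptionuniformizers} assumes $x\in K_\O$ from the start, so I cannot apply it directly. Instead I would argue that $\iota(\pi_E)\in\O$ has reduced norm of valuation $1$, and then show that $\iota(\pi_E) \in \P_\O\smallsetminus \P_\O^2$, which is equal to $\varpi_\O\O^\times$ since $\P_\O=\varpi_\O\O$.

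For $\O_\D$ this step is immediate, because $v_\D(x)=v(\nrd x)=1$. For $\J$ I would write $\iota(\pi_E)=a+\varpi_\J b$ as in Lemma \ref{lem:finerstructureramifiedorders}. Repeating the argument of \eqref{auxeq:conditionsforuniformizer} then gives $a\in\varpi\o_{E_B}$ and $b\in\o_{E_B}^\times$, so $\iota(\pi_E)\in\varpi_\J\J^\times\subset K_\J$ by \eqref{auxeq:descriptionuniformizers}. It follows that $E^\times=\pi_E^{\Z}\o_E^\times\subset\iota^{-1}(K_\O)$, which completes the proof.
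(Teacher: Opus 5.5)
Your proof is correct and follows the paper's argument. You use Lemma \ref{lem:integralelementsnormalizer} to force $n=0$, norm parity to exclude $\M$ with split or ramified $E$, and Lemma \ref{lem:optimalembeddingsKO} for the empty cases. For split $E$ and $\J$ you use the element $(\varpi,1)$ with Lemma \ref{lem:descriptionuniformizers}, and for a ramified uniformizer in $\J$ the decomposition $a+\varpi_\J b$. You also correctly notice that Lemma \ref{lem:descriptionuniformizers} is stated only for $x\in K_\O$, a point the paper applies without comment, and you repair it by using $\iota(\pi_E)\in\J$.

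There are two slips, neither of which affects the argument:
\begin{itemize}
\item $\o_E^\times$ spans $\o_E$ only when $E$ is a field. For split $E$ with $q=2$ one has $\o_E^\times\subset\o_{E,1}$. So either derive $n=0$ after excluding split $E$, or use all of $\o_E\cap E^\times$ as the paper does.
\item $\P_\J\smallsetminus\P_\J^2\neq\varpi_\J\J^\times$, because $\J/\P_\J\simeq k(\p)\times k(\p)$ is not a field. Your direct computation for $\J$ never relies on this claim.
\end{itemize}
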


\begin{proof}
We first show that if $E^\times \subset \iota^{-1}(K_\O)$, then we must have $n = 0$. Indeed, note that $\o_E \cap E^\times$ is the set of elements of $E^\times$ which are integral over $\o$. Thus, if $E^\times \subset \iota^{-1}(K_\O)$, then by Lemma \ref{lem:integralelementsnormalizer} we have $\o_E \cap E^\times \subset \iota^{-1}(\O)$. One can check that in every case $\o_E \cap E^\times$ generates $\o_E$ as an $\o$-algebra, so we deduce that $\o_E \subset \iota^{-1}(\O)$ and thus $n=0$, as desired. Let us now treat each of the cases $\O \in \{\O_\D, \M, \J\}$.

The case $\O = \O_\D$ is very simple. Indeed, in this case we have $K_\O =  \D^\times$, so that $\iota(E^\times) \subset K_\O$ is clear. We also note that $\Emb(\o_E; \O_\D)$ is empty precisely when $E \simeq F \times F$, see Lemma \ref{lem:optimalembeddingsKO}. 

When $\O = \M$, then $K_\O = F^\times \M^\times$. If $E/F$ is the unramified quadratic extension, then $E^\times = F^\times \o_{E}^\times$, so we see that if $\iota \in \Emb(\o_E; \M)$ we have indeed $\iota(E^\times) \subset K_\O$. If $E$ is arbitrary and we have $\iota(E^\times)\subset K_\O$, then taking norms we see that $\nrd(E^\times)\subset \varpi^{2\Z}\o^\times = \nrd(K_\O)$. This implies that $E/F$ must be the unramified quadratic extension. 

Suppose now that $\O = \J$. Let $E/F$ be a ramified field extension, and let $\varpi_E$ be a uniformizer of $E$. By Lemma \ref{lem:optimalembeddingsKO} we know that $\Emb(\o_E; \J)$ is nonempty. Furthermore, by Lemma \ref{lem:descriptionuniformizers} we have $\iota(\varpi_E) \in \varpi_\O \O^\times \subset K_\O$. Since $E^\times = \varpi_E^{\Z}\o^\times$, we deduce that $\iota(E^\times)\subset K_\O$, as desired. If $E/F$ is an unramified field extension, then by Lemma \ref{lem:optimalembeddingsKO} we have $\Emb(\o_E; \J) = \varnothing$, so we can discard this case. Finally, suppose that $E = F \times F$ and let $\iota \in \Emb(\o_E; \J)$. Let $\alpha := (\varpi, 1) \in \o \times \o$. We claim that $\iota(\alpha) \notin K_\O$. Indeed, since $K_\O = \varpi_\O^{\Z}\O^\times$ and $\nrd(\alpha) = \varpi$, if we had $\iota(\alpha) \in K_\O$ we would deduce that $\iota(\alpha) \in \varpi_\O \O^\times$. But by Lemma \ref{lem:descriptionuniformizers} this would imply that $E$ is a ramified field extension, contradicting the assumption that $E  = F\times F$. 
\end{proof}

\subsubsection{Characteristic polynomials in small neighbourhoods.}
Let us write
\begin{equation*}
\begin{aligned}
Z_u & := \{(t, n) \in \o\times \o: X^2 - tX + n \modulo \p \text{ is irreducible in }k(\p)[X] \},\\
Z_r & := \{(t, n) \in \o \times \o: X^2 - tX + n \in \o[X] \text{ is an Eisenstein polynomial}\} = \p \times (\p \smallsetminus \p^2).
\end{aligned}
\end{equation*}
Let $x \in B$ and recall the function $h_B$ from \eqref{eq:defiofh}. We know that \cite[Ch. I, §6]{serrelocalfields}: 
\begin{enumerate}
    \item [i)] If $h(x) \in Z_u$, then $F[x]\subset B$ is a quadratic unramified field extension of $F$ and $\o[x]$ is its maximal order. 
    \item [ii)] If $h(x) \in Z_r$, then $F[x]\subset B$ is a ramified field extension, $\o[x]$ is its maximal order and $x$ a uniformizer of $F[x]$.
\end{enumerate}
\begin{lem}\label{lem:certainelementsareoptimal}
If $x\in \O$ and $h(x) \in Z_u \cup Z_r$, then $n_\O(x) = 0$.
\end{lem}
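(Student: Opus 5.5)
The plan is to squeeze $\iota^{-1}(\O)$ between two copies of the maximal order of $F[x]$. Set $E := F[x]$ and let $\iota : E \hookrightarrow B$ be the inclusion, so that by definition $n_\O(x) = n_\O(\iota)$ is determined by $\iota^{-1}(\O) = \o_{E, n_\O(x)}$. Because every order of $E$ has the form $\o_{E,n} = \o + \varpi^n \o_E$ and these orders are nested and distinct, $n_\O(x) = 0$ is equivalent to $\iota^{-1}(\O) = \o_E$. So it suffices to prove the two inclusions $\o_E \subset \iota^{-1}(\O)$ and $\iota^{-1}(\O) \subset \o_E$.

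\emph{Upper inclusion.} The set $\iota^{-1}(\O)$ is an $\o$-order in $E$. Every order is contained in the maximal order, so $\iota^{-1}(\O) \subset \o_E$. Equivalently, every element of $\O$ is integral over $\o$, and $\o_E$ is exactly the set of elements of $E$ integral over $\o$.

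\emph{Lower inclusion.} Since $x \in \O$ and $\O$ is a ring containing $\o$, the $\o$-algebra $\o[x]$ lies in $\O$, hence in $\iota^{-1}(\O)$. Now apply the recalled facts from \cite[Ch. I, §6]{serrelocalfields}:
\begin{itemize}
\item if $h(x) \in Z_u$, then $E$ is the unramified quadratic extension and $\o[x] = \o_E$;
\item if $h(x) \in Z_r$, then $E$ is a ramified quadratic extension, $x$ is a uniformizer, and again $\o[x] = \o_E$.
\end{itemize}
In either case $\o_E = \o[x] \subset \iota^{-1}(\O)$. Combined with the upper inclusion, this gives $\iota^{-1}(\O) = \o_E$, that is, $n_\O(x) = 0$.

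There is no real obstacle here. The only point needing care is to check that $x$ is regular, so that $n_\O(x)$ is defined. In both cases $F[x]$ is a quadratic field extension of $F$, hence a separable $F$-algebra of degree two, so $x \in B_{\reg}$ as required. Note also that the argument never uses which of $\M, \J, \O_\D$ the order $\O$ is; it works for any order $\O$ in $B$.
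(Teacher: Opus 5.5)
Your proposal is correct and is essentially the paper's proof: both note that $\o[x] \subset \O \cap F[x]$, and that $h(x) \in Z_u \cup Z_r$ forces $\o[x]$ to be the maximal order of $F[x]$, hence $\O \cap F[x] = \o[x]$ and $n_\O(x) = 0$. Your remarks on the regularity of $x$ and on the inclusion $\iota^{-1}(\O) \subset \o_E$ just make explicit what the paper leaves implicit.
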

\begin{proof}
We have $\o[x] \subset \O$ and the assumption guarantees that $\o[x]$ is the maximal order of $F[x]$, thus we must have $\O \cap F[x] = \o[x]$, as desired. 
\end{proof}

\begin{lem}\label{lem:imageofhhereditaryorders}
We have $Z_u \cup Z_r \subset h(\M)\cap h(\O_\D)$, as well as $Z_r \subset h(\J)$.
\end{lem}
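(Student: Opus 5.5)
The plan is to produce, for each pair $(t, n)$ in $Z_u$ or $Z_r$, explicit elements of the relevant orders whose reduced trace and reduced norm are $t$ and $n$. Two facts make this easy. First, $h$ is invariant under embeddings, so $h(\iota(x)) = h_E(x)$ for any $\iota \in \Emb(E; B)$. Second, for $(t, n) \in Z_u \cup Z_r$ the polynomial $X^2 - tX + n$ defines a quadratic field $E := F[X]/(X^2 - tX + n)$, and the class $x$ of $X$ generates the maximal order, $\o_E = \o[x]$.

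\textbf{Maximal orders.} Fix $(t, n) \in Z_u \cup Z_r$, so that $E$ is a quadratic field extension of $F$.
\begin{itemize}
\item For $\M$: the companion matrix $\begin{psmallmatrix} 0 & -n \\ 1 & t \end{psmallmatrix}$ lies in $\M$ and has trace $t$ and determinant $n$. Hence $(t, n) \in h(\M)$.
\item For $\O_\D$: every quadratic field extension of $F$ embeds in $\D$ (Lemma \ref{lem:optimalembeddingsKO} gives $m(\o_E; \O_\D, K_{\O_\D}) = 1$). Take $\iota \in \Emb(E; \D)$. Since $\iota(x)$ is integral, $v_\D(\iota(x)) = v(n) \geq 0$, so $\iota(x) \in \O_\D$. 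Equivalently, $\Emb(\o_E; \O_\D)$ is nonempty. This gives $h(\iota(x)) = (t, n) \in h(\O_\D)$.
\end{itemize}

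\textbf{The hereditary order.} Take $(t, n) \in Z_r$, so $t \in \p$ and $n \in \p \smallsetminus \p^2$. I would exhibit an element of $\J$ directly, namely
\[
y := \begin{pmatrix} t & 1 \\ -n & 0 \end{pmatrix}.
\]
Its trace is $t$ and its determinant is $n$. For $y \in \J$ we need the lower-left entry in $\p$, and indeed $-n \in \p$ since $n \in \p$. The diagonal entries $t, 0$ and the upper-right entry $1$ lie in $\o$, so $y \in \J$ and $(t, n) = h(y) \in h(\J)$.

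As an alternative route, Lemma \ref{lem:optimalembeddingsKO} gives $\Emb(\o_E; \J) \neq \varnothing$ for ramified $E$, and the image of $x$ then lies in $\J$. The explicit matrix is more direct, though.

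I expect no real obstacle here. The only point needing care is the membership check: integrality must place $\iota(x)$ in $\O_\D$, which holds because $\O_\D$ is exactly the set of integral elements. The inclusion $Z_u \not\subset h(\J)$ is not claimed, consistent with Lemma \ref{lem:optimalembeddingsKO}.
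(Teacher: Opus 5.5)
Your proof is correct, but it takes a more hands-on route than the paper's. The paper argues uniformly. For $(t,n) \in Z_u \cup Z_r$ it notes that $(t,n) \in h(\O)$ is equivalent to the existence of $\iota \in \Emb(E;B)$ with $\iota(\o[\alpha]) \subset \O$. It then uses Lemma \ref{lem:certainelementsareoptimal} to turn this into the condition $m(\o_E;\O,K_\O) > 0$, and reads off all three cases from the embedding-number computations cited in Lemma \ref{lem:optimalembeddingsKO}.

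You instead write down explicit matrices in $\M$ and $\J$ with the prescribed trace and determinant. This needs no embedding theory, and it in fact proves the stronger inclusions $\o \times \o \subset h(\M)$ and $\o \times \p \subset h(\J)$. Only for $\O_\D$ do you need an external input, namely that every quadratic field extension of $F$ embeds in $\D$. After that, the description $\O_\D = \{y \in \D : v_\D(y) \geq 0\}$ finishes the argument. It works for any irreducible $X^2 - tX + n$ with $t, n \in \o$, not just for $Z_u \cup Z_r$.

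What the paper's route gives in addition is the converse direction, for instance $Z_u \cap h(\J) = \varnothing$. It also keeps the argument inside the optimal-embedding framework that is reused in Lemma \ref{lem:orbitalintegralOnadapted} and Proposition \ref{prop:orbitalintegralneighbourhood}. Neither is needed for the statement as given, so your argument is complete.
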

\begin{proof}
For $(t, n) \in F \times F$ define $E := F[X]/(X^2 - tX + n)$ and let $\alpha := X \modulo (X^2 - tX + n)$. Note that $E$ is a separable $F$-algebra if and only if $t^2 - 4n \neq 0$. Also, note that $(t, n) \in h(\O)$ is equivalent to the existence of an embedding $\iota \in \Emb(E; B)$ such that $\iota(\o[\alpha])\subset \O$. If $(t, n) \in Z_u \cup Z_r$, we deduce, by Lemma \ref{lem:certainelementsareoptimal}, that $(t, n) \in h(\O)$ if and only if $m(\o_{E}, \O; K_\O) > 0$. The proof of the lemma is finished by applying Lemma \ref{lem:optimalembeddingsKO}.
\end{proof}

\begin{lem}\label{lem:preimagesramifiedunramified}
If $\O \in \{\J, \O_\D\}$ we have $h_\O^{-1}(Z_r) = \varpi_\O \O^\times$. If $\O = \O_\D$, we have $h_{\O_\D}^{-1}(Z_u) = U_\D - \o^\times U_\D^1$. 
\end{lem}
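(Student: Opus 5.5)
For the first claim we must show $h_\O^{-1}(Z_r) = \varpi_\O\O^\times$ for $\O \in \{\J, \O_\D\}$. The idea is to reduce it to Lemma \ref{lem:descriptionuniformizers} by showing that every element of $\O$ whose characteristic polynomial is Eisenstein lies in $K_\O$.

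For the inclusion $\varpi_\O\O^\times \subset h_\O^{-1}(Z_r)$: since $\varpi_\O \in \O$, we have $\varpi_\O\O^\times \subset \O$. By the implication i) $\Rightarrow$ ii) of Lemma \ref{lem:descriptionuniformizers}, every element of $\varpi_\O\O^\times$ has Eisenstein characteristic polynomial, so its image under $h$ lies in $Z_r$.

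For the reverse inclusion, take $x \in \O$ with $h(x) \in Z_r$. Then $\nrd(x) \in \p \smallsetminus \p^2$, so $x$ is invertible in $B$. It remains to show $x \in K_\O$; then ii) $\Rightarrow$ i) of Lemma \ref{lem:descriptionuniformizers} gives $x \in \varpi_\O\O^\times$.
\begin{itemize}
\item If $\O = \O_\D$, this is immediate because $K_\D = \D^\times$.
\item If $\O = \J$, it is easier to bypass $K_\O$ and repeat the computation from the proof of Lemma \ref{lem:descriptionuniformizers}. That argument only used $x \in \O$, which we already have. Write $x = a + \varpi_\O b$ with $a, b \in \o_{E_B}$, using \eqref{eq:descriptionordercyclicpart1}. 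The Eisenstein conditions \eqref{auxeq:conditionsforuniformizer} force $\nrd(a) \in \p$, since $\varpi\nrd(b) \in \p$.
\item Because $E_B$ is split or unramified, $\nrd(a) \in \p$ implies $\nrd(a) \in \p^2$ and $a \in \varpi\o_{E_B}$. The hypothesis $\trd(a)\in\p$ is used here in the split case: $a=(a_1,a_2)$ with $a_1a_2\in\p$ and $a_1+a_2\in\p$ forces both $a_i \in \p$.
\item Hence $\nrd(b) \in \o^\times$, so $b \in \o_{E_B}^\times$, and \eqref{auxeq:descriptionuniformizers} gives $x \in \varpi_\O\O^\times$.
\end{itemize}

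For the second claim, let $\O = \O_\D$ and let $k = k(\P_\D)$ be the quadratic extension of $k(\p)$. We must show $h_{\O_\D}^{-1}(Z_u) = U_\D \smallsetminus \o^\times U_\D^1$.
\begin{itemize}
\item Reduction modulo $\P_\D$ is compatible with $h$: reduced trace and norm reduce to the trace and norm of $k/k(\p)$. This holds because $\O_\D = \o_{E}\oplus\varpi_\O\o_E$ with $E$ unramified, and by \eqref{eq:tracecyclicalgebra}, \eqref{eq:normcyclicalgebra}, $h(a+\varpi_\O b) \equiv h_E(a) \bmod \p$. So for $x \in \O_\D$, $h(x)\in Z_u$ if and only if the reduction $\bar x = \bar a \in k$ has irreducible minimal polynomial over $k(\p)$, i.e.\ $\bar x \in k \smallsetminus k(\p)$.
\item The set of $x\in\O_\D$ with $\bar x \notin k(\p)$ is exactly $\O_\D$ minus those $x$ with $\bar x\in k(\p)$. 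The latter are $\P_\D$ together with $\o^\times U_\D^1 = \o^\times(1+\P_\D)$, since $\bar x\in k(\p)^\times$ means $x \in \o^\times + \P_\D$.
\item Finally, $\O_\D \smallsetminus \P_\D = U_\D$, so the preimage is $U_\D \smallsetminus \o^\times U_\D^1$.
\end{itemize}

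The only delicate step is the reverse inclusion in the first claim for $\J$, where $x \in K_\J$ is not a priori known. The plan is to handle it by the direct coordinate computation above rather than through $K_\O$.
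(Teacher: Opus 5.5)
Your proposal is correct and follows the paper's route: the first claim comes from Lemma \ref{lem:descriptionuniformizers}, and the second comes from the decompositions \eqref{eq:descriptionordercyclicpart1}--\eqref{eq:descriptionordercyclicpart2}, which give $h(a+\varpi_\O b)\equiv h_E(a) \bmod \p$. Your one addition is a legitimate tightening of what the paper calls a ``reformulation'': you note that Lemma \ref{lem:descriptionuniformizers} is stated for $x\in K_\O$, while its ii) $\Rightarrow$ i) argument only uses $x\in\O$, so for $\J$ you rerun that computation directly.
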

\begin{proof}
The first claim is a reformulation of Lemma \ref{lem:descriptionuniformizers}. The second follows from \eqref{eq:descriptionordercyclicpart1} and \eqref{eq:descriptionordercyclicpart2}. 
\end{proof}

We state a consequence of Hensel's lemma which is convenient for the computations below.
\begin{lem}\label{lem:auxiliaryhensel}
Let $f: \o^n \rightarrow \o^m$ be a map of the form $f(x) = Ax + \varpi P(x)$, where $A \in M_{m \times n}(\o)$ and $P: \o^n \rightarrow \o^m$ is a polynomial map with coefficients in $\o$. Then $f$ is surjective if and only if $x\mapsto Ax$ is surjective modulo $\p$.
\end{lem}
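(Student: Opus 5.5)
The plan is to prove Lemma \ref{lem:auxiliaryhensel} by reducing both directions to linear algebra over the residue field $k(\p)$ and then running a multivariable Hensel iteration.

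\emph{Necessity.} Suppose $f$ is surjective. Reducing modulo $\p$ gives $f(x) \equiv Ax \pmod{\p}$. Hence the induced map $k(\p)^n \to k(\p)^m$, $\bar x \mapsto \bar A \bar x$, hits every residue class, so $\bar A$ is surjective.

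\emph{Sufficiency.} Assume $\bar A$ has rank $m$ over $k(\p)$.

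First, choose a right inverse over $\o$. Pick $m$ columns of $A$ whose reductions are linearly independent. The corresponding $m\times m$ minor is then a unit in $\o$. Inverting this minor and padding with zeros gives $C \in M_{n\times m}(\o)$ with $AC = I_m$.

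Next, fix a target $y \in \o^m$. I will construct a sequence $x_k$ with $f(x_k) \equiv y \pmod{\p^{k}}$ and $x_{k+1} \equiv x_k \pmod{\p^{k}}$.

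Start with $x_1 := Cy$. Then $f(x_1) = y + \varpi P(x_1) \equiv y \pmod{\p}$.

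Given $x_k$, write $y - f(x_k) = \varpi^k r_k$ with $r_k \in \o^m$, and set $x_{k+1} := x_k + \varpi^k C r_k$. The key estimate is the Taylor expansion of the polynomial map $P$ with $\o$-coefficients:
\begin{equation*}
P(x + \varpi^k z) \equiv P(x) \pmod{\p^k}\quad\text{for } z \in \o^n.
\end{equation*}
Using this, we get
\begin{equation*}
f(x_{k+1}) = f(x_k) + \varpi^k A C r_k + \varpi\bigl(P(x_{k+1}) - P(x_k)\bigr) \equiv f(x_k) + \varpi^k r_k = y \pmod{\p^{k+1}}.
\end{equation*}
The sequence $(x_k)$ is Cauchy in $\o^n$, which is complete, so it converges to some $x$. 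Since $f$ is continuous, $f(x) = y$.

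The only delicate point is the congruence $\varpi(P(x+\varpi^k z) - P(x)) \in \p^{k+1}$. This follows from the fact that $P(x+\varpi^k z) - P(x)$ is divisible by $\varpi^k$ coordinatewise, as the expansion of a polynomial with integral coefficients. The factor $\varpi$ in front of $P$ then supplies the extra power needed. No Jacobian condition beyond surjectivity of $\bar A$ is required, because the nonlinear part is already $\p$-adically small.
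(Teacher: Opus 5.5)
Your proof is correct and is essentially the paper's argument: inverting a unit $m\times m$ minor to get a right inverse $C$ with $AC = I_m$ does what the paper's row and column operations do when they bring $A$ to the form $(I_m\; B)$, and your iteration $x_{k+1} = x_k + \varpi^k C r_k$ is the Hensel lifting the paper invokes. If anything, your simultaneous multivariable iteration is more explicit than the paper's ``one variable at a time'' sketch, since the term $\varpi P(x)$ couples all the variables.
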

\begin{proof}
The fact that if $f$ is surjective then $x \mapsto Ax$ is surjective modulo $\p$ is clear from the shape of $f$. For the other direction, using that $\o$ is local one can perform row and column operations to bring $A$ to the shape $A = (I_m \, B)$, where $I_m$ is the $m\times m$ identity matrix and $B \in M_{m \times (n - m)}(\o)$. With this assumption the result follows by applying Hensel's lemma one variable at a time.
\end{proof}

Recall Definition \ref{defi:principalneighbourhoods}.

\begin{lem}\label{lem:charpolyunramified}
Let $\O \in \{\M, \O_\D\}$ and $\alpha \in h_\O^{-1}(Z_u)$. Then, for $n \geq 1$ we have 
\begin{equation*}
    h(\alpha U_\O^n) = h(\alpha) + \p^{\lceil n/e_\O\rceil} \times \p^{\lceil n/e_\O\rceil}.
\end{equation*}
\end{lem}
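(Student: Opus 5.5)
The plan is to prove the two inclusions separately. Throughout, write $m := \lceil n/e_\O\rceil$. Since $h(\alpha) \in Z_u$, the norm $\nrd(\alpha)$ lies in $\o^\times$, so $\alpha \in \O^\times$.

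\emph{Inclusion $\subseteq$.} Let $u = 1 + x$ with $x \in \P_\O^n$. Then
\begin{equation*}
\trd(\alpha u) = \trd(\alpha) + \trd(\alpha x), \qquad \nrd(\alpha u) = \nrd(\alpha)\bigl(1 + \trd(x) + \nrd(x)\bigr).
\end{equation*}
Since $\alpha x \in \P_\O^n$, it suffices to show that $\trd(\P_\O^n) \subset \p^m$ and $\nrd(\P_\O^n) \subset \p^m$.
\begin{itemize}
\item[$\cdot$] For $\O = \M$ we have $\P_\M^n = \varpi^n \M$, so both claims are immediate.
\item[$\cdot$] For $\O = \O_\D$, take $y \in \P_\D^n$. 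Then $v(\nrd(y)) = v_\D(y) \geq n$, so $\nrd(\P_\D^n) \subset \p^{n} \subset \p^m$.
\item[$\cdot$] For the trace in $\D$: if $y \in \P_\D^n \smallsetminus F$, then $F[y]$ is a quadratic field. Both conjugates of $y$ have valuation at least $n/2$ in the normalized valuation of $F$, hence so does their sum $\trd(y) \in F$. This gives $v(\trd(y)) \geq \lceil n/2 \rceil$. If $y \in F$, then $v(y) \geq n/2$ directly.
\end{itemize}
Both quantities therefore lie in $\p^m$, which gives the inclusion.

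\emph{Inclusion $\supseteq$.} This is a surjectivity statement, and I will get it from Lemma \ref{lem:auxiliaryhensel}. The idea is to restrict $u$ to a convenient subfamily $u = 1 + \varpi^m y$, with $y$ ranging over an $\o$-lattice $L \subset \O$, where $\varpi^m L \subset \P_\O^n$.
\begin{itemize}
\item[$\cdot$] For $\M$, take $L = \M \simeq \o^4$.
\item[$\cdot$] For $\O_\D$, use Lemma \ref{lem:finerstructureramifiedorders} and take $L = \o_{E_B} \simeq \o^2$, with $E_B/F$ unramified. Then $\varpi^m \o_{E_B} \subset \P_\D^{2m} \subset \P_\D^n$.
\end{itemize}
After dividing by $\varpi^m$, the map $y \mapsto \varpi^{-m}\bigl(h(\alpha(1 + \varpi^m y)) - h(\alpha)\bigr)$ equals
\begin{equation*}
y \longmapsto \bigl(\trd(\alpha y),\ \nrd(\alpha)\trd(y)\bigr) + \varpi^m\bigl(0,\ \nrd(\alpha)\nrd(y)\bigr).
\end{equation*}
Since $n \geq 1$ gives $m \geq 1$, this has exactly the shape required by Lemma \ref{lem:auxiliaryhensel}. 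It remains to check that the linear part is surjective onto $k(\p)^2$ modulo $\p$. Because $\nrd(\alpha)$ is a unit, this amounts to the linear independence over $k(\p)$ of the functionals $y \mapsto \trd(\alpha y)$ and $y \mapsto \trd(y)$ on $L/\p L$.
\begin{itemize}
\item[$\cdot$] For $\M$, the trace pairing on $M_2(k(\p))$ is nondegenerate. Dependence would force $\bar\alpha$ to be a scalar, contradicting the irreducibility of the characteristic polynomial of $\bar\alpha$, which is what $h(\alpha) \in Z_u$ says.
\item[$\cdot$] For $\O_\D$, write $\alpha = a + \varpi_\O b$ with $a, b \in \o_{E_B}$. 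By \eqref{eq:tracecyclicalgebra} and the multiplication formula, $\trd(\alpha y) = \trd(ay)$ for $y \in \o_{E_B}$. Moreover $a \bmod \p_{E_B}$ is the image of $\alpha$ in $k(\P_\D) \simeq \mathbb{F}_{q^2}$, and it lies outside $k(\p)$ because $h(\alpha) \in Z_u$. On $\mathbb{F}_{q^2}$ the functionals $y \mapsto \Tr(\bar a y)$ and $y \mapsto \Tr(y)$ are independent precisely when $\bar a \notin \mathbb{F}_q$, so the condition holds.
\end{itemize}

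The main points of care are two. First, the trace bound $\trd(\P_\D^n) \subset \p^{\lceil n/2\rceil}$ for odd $n$. Second, for $\O_\D$, the choice of the two-dimensional lattice $\o_{E_B}$: it must be small enough to lie in $\P_\D^n$ after scaling by $\varpi^m$, yet still make the linear part surjective even though $\alpha$ itself need not lie in $E_B$.
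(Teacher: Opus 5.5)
Your proof is correct and follows essentially the same route as the paper. The inclusion $\subseteq$ comes from valuation bounds on $\trd$ and $\nrd$ of $\P_\O^n$; the reverse inclusion applies Lemma \ref{lem:auxiliaryhensel} to a lattice $\varpi^m L\subset\P_\O^n$, and your map $y\mapsto(\trd(\alpha y),\nrd(\alpha)\trd(y))$ is exactly the paper's $x\mapsto(\trd(x),\trd(\overline{\alpha}x))$ after the substitution $x=\alpha y$. The only difference is cosmetic: the paper takes $L=\o[\alpha]$ in both cases and gets surjectivity modulo $\p$ from the unit discriminant of $\o[\alpha]$, whereas you take $L=\M$ (resp.\ $L=\o_{E_B}$) and use nondegeneracy of the trace pairing on $M_2(k(\p))$ (resp.\ $\mathbb{F}_{q^2}$).
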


\begin{proof}
Suppose first that $\O = \M$. We have $\alpha U_\M^n = \alpha + \P_\M^n$. Writing $\beta = \alpha + \varpi^n x$, then 
\begin{equation}\label{auxeq:proofofunramifiedcharacteristic}
    \trd(\beta) = \trd(\alpha) + \varpi^n \trd(x), \quad \nrd(\beta) = \nrd(\alpha) + \varpi^n \trd(\overline{\alpha}x) + \varpi^{2n}\nrd(x).
\end{equation}
Thus, $h(\alpha U_\M^n) \subset h(\alpha) + \p^n \times \p^n$. By Lemma \ref{lem:auxiliaryhensel}, to prove the equality it is enough to show that $g: \O_\M \rightarrow \o^2$ is surjective, where 
\begin{equation*}
g(x) = (\trd(x), \trd(\overline{\alpha}x)).
\end{equation*}
In fact, since $F[\alpha]/F$ is unramified and $\o[\alpha]$ is its ring of integers, we know by the determinantal definition of the discriminant that $g(\o[\alpha]) = \o^2$, see \cite[Chapter III]{serrelocalfields}, in particular Proposition 3 in Section 2.

Suppose now that $\O = \O_\D$. As before we have $\alpha U_\D^n = \alpha + \P_\D^n$. If $\beta = \alpha + x$, with $x \in \P_\D^n$, then
\begin{equation*}
    \trd(\beta) = \trd(\alpha) +\trd(x), \quad \nrd(\beta) = \nrd(\alpha) + \trd(\overline{\alpha}x) + \nrd(x).
\end{equation*} 
We have $\trd(x), \trd(\overline{\alpha}x) \in \p^{\lceil n/2\rceil}$, while $\nrd(x) \in \p^n$. Thus,
\begin{equation*}
    h(\alpha U_\D^n) \subset h(\alpha) + \p^{\lceil n/2\rceil} \times \p^{\lceil n/2 \rceil}.
\end{equation*}
It remains to prove the opposite containment, for which it is enough to assume that $n = 2m$ is even. If we write $\beta = \alpha + \varpi^m x$ with $x \in \O_\D$, then equation \eqref{auxeq:proofofunramifiedcharacteristic} holds and the proof is exactly as before. 
\end{proof}

Now we move on to elements $\alpha \in h_\O^{-1}(Z_r)$. 

\begin{lem}\label{lem:charpolyramified}
Let $\O \in \{\J, \O_\D\}$ and $\alpha \in h_\O^{-1}(Z_r)$. Then, we have: 
\begin{enumerate}
    \item [i)]$h(\alpha U_\O) = Z_r = \p \times (\p \smallsetminus \p^2)$,
    \item [ii)]$h(\alpha U_\O^{2n - 1}) = h(\alpha) + \p^n \times \p^{n + 1}$ for $n \geq 1$, and
    \item [iii)]$h(\alpha U_\O^{2n}) = h(\alpha) + \p^{n + 1}\times \p^{n + 1}$ for $n \geq 1$.
\end{enumerate}
\end{lem}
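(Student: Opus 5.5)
The plan is to work in the cyclic description of Lemma \ref{lem:finerstructureramifiedorders}. Write $E := E_B$, so that $\O = \o_E \oplus \varpi_\O \o_E$ and $\P_\O = \p_E \oplus \varpi_\O \o_E$. By Lemma \ref{lem:preimagesramifiedunramified}, $\alpha \in \varpi_\O \O^\times$; write $\alpha = \varpi_\O u$ with $u \in \O^\times$. Since $K_\O = \varpi_\O^\Z \O^\times$ normalizes $\O$, we get $\alpha \P_\O^k = \P_\O^{k+1}$. From $\varpi_\O^2 = \varpi$ we also get $\P_\O^{2m} = \varpi^m \O$ and $\P_\O^{2m+1} = \varpi^m \P_\O$. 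Hence
\begin{equation*}
\alpha U_\O^{2n-1} = \alpha + \varpi^n \O, \qquad \alpha U_\O^{2n} = \alpha + \varpi^n \P_\O .
\end{equation*}

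\textbf{Part i).} Note that $\alpha U_\O = \varpi_\O \O^\times = h_\O^{-1}(Z_r)$ by Lemma \ref{lem:preimagesramifiedunramified}, which gives $h(\alpha U_\O) \subset Z_r$. For the reverse inclusion, take $(t,n) \in Z_r$. Lemma \ref{lem:imageofhhereditaryorders} gives $Z_r \subset h(\J)\cap h(\O_\D)$, so $(t,n) = h(x)$ for some $x \in \O$. Then $x \in h_\O^{-1}(Z_r) = \alpha U_\O$.

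\textbf{Containments in ii) and iii).} For $\beta = \alpha + y$ we have $\trd(\beta) = \trd(\alpha) + \trd(y)$ and $\nrd(\beta) = \nrd(\alpha) + \trd(\overline{\alpha} y) + \nrd(y)$. Three facts are needed:
\begin{itemize}
\item $\trd(\P_\O) \subset \p$, which follows from \eqref{eq:tracecyclicalgebra};
\item $\overline{\alpha} \in \P_\O$, so $\overline{\alpha}\,\O \subset \P_\O$;
\item $\nrd(\varpi^n \O) \subset \p^{2n} \subset \p^{n+1}$ for $n \ge 1$.
\end{itemize}
For $y \in \varpi^n\O$ this gives $\trd(y) \in \p^n$ and $\trd(\overline{\alpha}y) \in \p^{n+1}$, so the norm changes by an element of $\p^{n+1}$. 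This is the containment in ii). For $y \in \varpi^n \P_\O$ the trace also lands in $\p^{n+1}$, which is the containment in iii).

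\textbf{Reverse inclusions.} This is the main step, and I would not apply Lemma \ref{lem:auxiliaryhensel} blindly. For $n = 1$ the quadratic term $\nrd(y)$ has the same valuation as the linear terms, so its hypothesis fails. Instead I would split $y$ into its two components and solve for trace and norm one at a time.

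\emph{Trace component.} Take $y_1 = \varpi^n a$ with $a \in \o_E$ in case ii), or $y_1 = \varpi^{n+1} a$ with $a \in \o_E$ in case iii). Then $\overline{\alpha} = -\overline{u}\varpi_\O$ up to the explicit formulas, so $\overline{\alpha}y_1 \in \varpi_\O E$. By \eqref{eq:tracecyclicalgebra} this part has trace $0$, so $\trd(\overline{\alpha}y_1) = 0$. Meanwhile $\trd(y_1)$ runs over all of $\p^n$, respectively $\p^{n+1}$, because $\trd\colon \o_E \to \o$ is surjective ($E$ is split or unramified). So we first pick $a$ giving the prescribed trace; the resulting norm perturbation $\nrd(y_1) \in \p^{2n}$, respectively $\p^{2n+2}$, lies in $\p^{n+1}$ and is absorbed in the next step.

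\emph{Norm component.} Take $y_2 = \varpi^n \varpi_\O b$ with $b \in \o_E$; this lies in $\varpi^n\P_\O$, so it is allowed in both cases. Then $\trd(y_2) = 0$ and $\trd(\overline{\alpha}y_2) = \pm\varpi^{n+1}\trd(\overline{u}' b)$, where $u'$ is the unit of $\o_E$ appearing in $\alpha$. Also $\nrd(y_2) = -\varpi^{2n+1}\nrd(b)$, and the cross terms between $y_1$ and $y_2$ are of higher order. The map $b \mapsto \trd(\overline{u}'b)$ is a surjective linear map $\o_E \to \o$. After dividing by $\varpi^{n+1}$, the quadratic term carries an extra factor $\varpi^{n}$, with $n \ge 1$. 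So Lemma \ref{lem:auxiliaryhensel}, or one-variable Hensel, solves for any prescribed norm in $\nrd(\alpha) + \p^{n+1}$ while keeping the trace fixed.

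The expected obstacle is getting these cross terms right. One has to check that the $a$- and $b$-contributions do not interfere at valuation $n+1$, and that the case $n = 1$ of ii) works, where $\nrd(y_1) \in \p^2 = \p^{n+1}$ is not negligible but is absorbed by the norm step.
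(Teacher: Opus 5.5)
Your proposal is correct and is essentially the paper's proof: the same splitting of the increment along $\O = \o_{E_B}\oplus\varpi_\O\o_{E_B}$, fixing the $\o_{E_B}$-component first to hit the prescribed trace and then solving for the norm in the $\varpi_\O\o_{E_B}$-component via Lemma~\ref{lem:auxiliaryhensel} (this ordering is exactly how the paper avoids the $n=1$ issue you flag). One slip does not affect the argument: $\trd(\overline{\alpha}y_1)$ is not $0$ in general, since writing $\alpha = \varpi\gamma + \varpi_\O\delta$ with $\gamma\in\o_{E_B}$ and $\delta\in\o_{E_B}^\times$ gives, in case ii), $\trd(\overline{\alpha}\,\varpi^n a) = \varpi^{n+1}\trd(\overline{\gamma}a)$; however, this term lies in $\p^{n+1}$, and because $a$ is fixed before $b$ it is absorbed into the target of the norm step exactly like $\nrd(y_1)$.
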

\begin{proof}
Recall that $U_\O = \O^\times$. By Lemma \ref{lem:preimagesramifiedunramified} we know that $h_\O^{-1}(Z_r) = \varpi_\O U_\O$. Since $\alpha \in h_\O^{-1}(Z_r)$ by hypothesis, we deduce that $h_\O^{-1}(Z_r) = \alpha U_\O$. On the other hand, by Lemma \ref{lem:imageofhhereditaryorders} we know that $Z_r \subset h(\O)$. Part i) follows from these observations. 

We now deal with part ii). Note that $\alpha U_\O^{2n-1} = \alpha + \P_\O^{2n} = \alpha + \varpi^n \O$. Write $\beta \in \alpha U_\O^{2n-1}$ as $\beta = \alpha + \varpi^n y$, with $y \in \O$. Furthermore, recalling \eqref{eq:descriptionordercyclicpart1} and \eqref{auxeq:descriptionuniformizers} we write
\begin{equation}\label{auxeq:decompositionelements}
    \alpha = \varpi \gamma + \varpi_\O \delta, \quad y = z + \varpi_\O w
\end{equation}
with $\gamma \in \o_E, \delta \in \o_E^\times$ and $z, w \in \o_E$. Recall that $E = F\times F$ if $\O = \J$, and $E/F$ is the unramified quadratic extension if $\O = \O_\D$. Using \eqref{eq:tracecyclicalgebra} and \eqref{eq:normcyclicalgebra} we see that 
\begin{equation*}
    \trd(y) = \trd(z), \quad \trd(\overline{\alpha}y) = \varpi \trd(\overline{\gamma}z) - \varpi \trd(\overline{\delta}w),\quad \nrd(y) = \nrd(z)-\varpi \nrd(w).
\end{equation*}
Therefore, 
\begin{equation*}
\begin{aligned}
\trd(\beta) & = \trd(\alpha) + \varpi^n \trd(z),\\
\nrd(\beta) & = \nrd(\alpha) + \varpi^{n + 1} (\trd(\overline{\gamma}z)- \trd(\overline{\delta}w)) + \varpi^{2n}\nrd(z) - \varpi^{2n + 1}\nrd(w).
\end{aligned}
\end{equation*}
From this equation we see that $h(\alpha U_\O^{2n-1})\subset h(\alpha) + \p^n \times \p^{n + 1}$. To show equality in this inclusion, we need to prove that the map $g: \o_E^2 \rightarrow \o^2$ given by
\begin{equation*}
    g(z, w) = (\trd(z), \trd(\overline{\gamma}z - \overline{\delta}w) + \varpi^{n-1}\nrd(z) - \varpi^{n}\nrd(w))
\end{equation*}
is surjective. Let $(t, n) \in \o^2$. Since $\trd(\o_E) = \o$, we can always choose $z$ so that $\trd(z) = t$. After $z$ is fixed, the task reduces to showing that the map $g': \o_E \rightarrow \o$ given by
\begin{equation*}
    g'(w) = \trd(\overline{\delta}w) + \varpi^n \nrd(w)
\end{equation*}
is surjective. Since $\delta \in \o_E^\times$, we have $\trd(\overline{\delta}\o_E) = \o$, so the surjectivity of $g'$ follows from Lemma \ref{lem:auxiliaryhensel}. This finishes the proof of part ii).

For part iii), we have $\alpha U_\O^{2n} = \alpha + \P_\O^{2n + 1}$. If we write $\beta = \alpha + \varpi^n \varpi_\O y$, with $y \in \O$, then 
\begin{equation*}
    \trd(\beta) = \trd(\alpha) + \varpi^n \trd(\varpi_\O y), \quad \nrd(\beta) = \nrd(\alpha) + \varpi^n \trd(\overline{\alpha}\varpi_\O y) - \varpi^{2n + 1} \nrd(y).
\end{equation*}
Since $\trd(\P_\O) = \p$, it is clear that $h(\alpha U_\O^{2n}) \subset h(\alpha) + \p^{n + 1} \times \p^{n + 1}$. To check equality, we use the decomposition \eqref{auxeq:decompositionelements} and find that
\begin{equation*}
\begin{aligned}
    \trd(\beta) & = \trd(\alpha) + \varpi^{n + 1} \trd(w),\\
    \nrd(\beta) & = \nrd(\alpha) -\varpi^{n+1}\trd(\overline{\delta}z) + \varpi^{n + 2}\trd(\overline{\gamma}w) - \varpi^{2n + 1}(\nrd(z) - \varpi \nrd(w)).
\end{aligned}
\end{equation*} 
By Lemma \ref{lem:auxiliaryhensel}, it is now enough to show that the map $g: \o_E^2 \rightarrow \o^2$ given by $g(z, w) = (\trd(w), -\trd(\overline{\delta}z))$ is surjective. This is clear since $\delta \in \o_E^\times$ and $\trd(\o_E) = \o$.
\end{proof}

\subsubsection{Centralizers.}

In this section we compute the normalizer in $K_\O$ of the principal neighbourhood $\alpha U_\O^n$ as in Lemma \ref{lem:charpolyunramified} or Lemma \ref{lem:charpolyramified}. We start with a very basic group theoretic fact. 
\begin{lem}\label{lem:basicgrouptheoreticlemma}
Let $H_1, H_2$ and $N$ be subgroups of $G$. Assume that $H_1 \subset H_2$ and that $N$ is normal in $G$. Suppose that $H_1 \cap N = H_2 \cap N$ and $H_1N = H_2 N$. Then $H_1 = H_2$.
\end{lem}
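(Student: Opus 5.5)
The plan is a direct element chase. We show $H_2 \subset H_1$; the reverse inclusion is the hypothesis. Take an arbitrary $h \in H_2$ and use $H_1 N = H_2 N$ to move it into $H_1$ up to an element of $N$.

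Since $h \in H_2 \subset H_2 N = H_1 N$, we can write $h = h_1 n$ with $h_1 \in H_1$ and $n \in N$. Then $n = h_1^{-1} h$. Because $H_1 \subset H_2$, both $h_1^{-1}$ and $h$ lie in $H_2$, so $n \in H_2 \cap N$. The hypothesis $H_2 \cap N = H_1 \cap N$ then gives $n \in H_1$. Hence $h = h_1 n \in H_1$, and so $H_2 \subset H_1$.

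Normality of $N$ is only used so that $H_1N$ and $H_2N$ are subgroups; that makes the hypothesis meaningful, though even the set-theoretic product would suffice for the argument above. I do not expect any obstacle here. The only point requiring care is to decompose $h$ using $H_1 N$ rather than $N H_1$, so that $n$ is expressed as $h_1^{-1}h$, which visibly lies in $H_2$. Since $N$ is normal, $H_1 N = N H_1$ anyway, so either ordering works.
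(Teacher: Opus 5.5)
Your proof is correct and is essentially the paper's argument: write $h \in H_2$ as $h = h_1 n$ with $h_1 \in H_1$, $n \in N$, observe $n = h_1^{-1}h \in H_2 \cap N = H_1 \cap N$, and conclude $h \in H_1$. Your remark that normality of $N$ is not actually needed for the element chase is also accurate.
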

\begin{proof}
Let $h_2 \in H_2$. Then there is $h_1 \in H_1$ and $n \in N$ such that $h_2 = h_1 n$. Thus $h_1^{-1}h_2 = n \in N\cap H_2 = N \cap H_1$. So $h_1^{-1}h_2 \in H_1$ and, thus, $h_2 \in H_1$ as well. 
\end{proof}

Recall Lemma \ref{lem:finerstructureramifiedorders} and let $E = E_\D \subset \D$ be the unramified quadratic extension of $F$. We have a natural commutative diagram,
\begin{equation}\label{eq:diagramunramifiedinD}
\begin{tikzcd}
\o_E \arrow[rr, hook] \arrow[d]   &  & \O_\D \arrow[d] \\
\o_{E}/\p_{E} \arrow[rr, "\simeq"] &  & \O_\D/\P_\D,
\end{tikzcd}
\end{equation} 
where the bottom horizontal arrow is an isomorphism. Note that $\o_{E}/\p_{E} \simeq \O_\D /\P_\D \simeq \mathbb{F}_{q^2}$ is the degree two extension of $k(\p) \simeq \mathbb{F}_q$. We let $\sigma$ denote the Frobenius automorphism of $\mathbb{F}_{q^2}$ over $\mathbb{F}_q$. It has order two, and $\sigma x = x$ if and only $x \in \mathbb{F}_q$. Note that if $x \in \O_\D$, then
\begin{equation}\label{eq:standardinvolutionandfrobenius}
    \overline{x} \modulo \P_\D =  \sigma (x \modulo \P_\D)
\end{equation}
where $x \mapsto \overline{x}$ is the standard involution. It follows that if $x \in \O_\D$, then $x \equiv \overline{x} \modulo \P_\D$ holds if and only if $x \in \o + \P_\D$. Recall that $\D^\times$ normalizes $\O_\D$. 

\begin{lem}\label{lem:frobeniusconjugationquaternion}
Let $\alpha \in \D^\times$ and $x \in \O_\D$. Then 
\begin{equation}\label{auxeq:frobenius}
    \alpha x \alpha^{-1} \modulo \P_\D = \sigma^{v_\D(\alpha)} (x \modulo \P_\D).
\end{equation}
\end{lem}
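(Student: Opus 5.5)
We reduce to the two generators of $\D^\times$ described in Lemma \ref{lem:finerstructureramifiedorders}. Let $E = E_\D$ be the unramified quadratic subfield and $\varpi_\D := \varpi_{\O_\D}$, so that $\varpi_\D^2 = \varpi$ and $\varpi_\D^{-1} y \varpi_\D = \overline{y}$ for $y \in E$. By \eqref{eq:descriptionordercyclicpart2} we have $\D^\times = K_\D = \varpi_\D^{\Z}\O_\D^\times$. Write $\alpha = \varpi_\D^k u$ with $u \in \O_\D^\times$. Then $v_\D(\alpha) = v(\nrd(\alpha)) = k$, since $\nrd(\varpi_\D) = -\varpi$ up to a unit and $\nrd(u) \in \o^\times$.

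Both sides of \eqref{auxeq:frobenius} are multiplicative in $\alpha$. Conjugation by $\alpha$ preserves $\O_\D$ and $\P_\D$, because $\D^\times$ normalizes $\O_\D$ and $\P_\D$ is defined by the valuation. Hence it induces a ring automorphism $c_\alpha$ of $\O_\D/\P_\D \simeq \mathbb{F}_{q^2}$, and $\alpha \mapsto c_\alpha$ is a homomorphism $\D^\times \to \mathrm{Aut}(\mathbb{F}_{q^2})$. The map $\alpha \mapsto \sigma^{v_\D(\alpha)}$ is also a homomorphism, by additivity of $v_\D$. It therefore suffices to check the identity on the generators $u \in \O_\D^\times$ and $\varpi_\D$.

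For $u \in \O_\D^\times$, the reductions of $u$ and $u^{-1}$ are inverse in the commutative field $\O_\D/\P_\D$. So $u x u^{-1} \equiv x \bmod \P_\D$, which matches $\sigma^0 = \mathrm{id}$.

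For $\varpi_\D$, use \eqref{eq:descriptionordercyclicpart1} and \eqref{eq:descriptionordercyclicpart2} to write $x = y + \varpi_\D z$ with $y, z \in \o_E$, so that $x \equiv y \bmod \P_\D$. Then
\[
\varpi_\D x \varpi_\D^{-1} = \varpi_\D y \varpi_\D^{-1} + \varpi_\D(\varpi_\D z \varpi_\D^{-1}).
\]
Since $\varpi_\D^2 = \varpi$ is central, conjugation by $\varpi_\D$ coincides with conjugation by $\varpi_\D^{-1}$. Hence $\varpi_\D y \varpi_\D^{-1} = \overline{y}$, and the second term lies in $\varpi_\D\o_E \subset \P_\D$. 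So the reduction is $\overline{y} \bmod \P_\D$. Through the isomorphism in diagram \eqref{eq:diagramunramifiedinD}, this equals $\sigma(y \bmod \p_E)$, because on $\o_E$ the standard involution is the nontrivial Galois automorphism of the unramified extension, which reduces to Frobenius. This is exactly \eqref{eq:standardinvolutionandfrobenius}.

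The only point needing care is confirming that $v_\D(\varpi_\D) = 1$ and that conjugation is well defined modulo $\P_\D$. Both are immediate from $v_\D = v \circ \nrd$ and multiplicativity, so I expect no real obstacle.
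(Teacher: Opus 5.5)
Your proof is correct and follows the paper's argument. The paper also checks units via commutativity of $\O_\D/\P_\D$, handles even valuation via centrality of $F^\times$ (which your homomorphism reduction to the generators $\O_\D^\times$ and $\varpi_{\O_\D}$ subsumes), and treats $\varpi_{\O_\D}$ using $\varpi_{\O_\D} x \varpi_{\O_\D}^{-1} = \overline{x}$ on $\o_E$ compared with \eqref{eq:standardinvolutionandfrobenius}. Your decomposition $x = y + \varpi_{\O_\D} z$ and your remark that conjugation by $\varpi_{\O_\D}$ and by $\varpi_{\O_\D}^{-1}$ agree make explicit details the paper leaves implicit.
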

\begin{proof}
Since $\O_\D/\P_\D$ is commutative, the statement is clear when $\alpha \in \O_\D^\times$. Furthermore, since $F^\times$ centralizes $\D^\times$ we see that the statement is true when $v_\D(\alpha) \in 2\Z$. We know that $v_\D(\varpi_{\O_\D}) = 1$ and $\varpi_{\O_\D} x \varpi_{\O_\D}^{-1} = \overline{x}$ for $x \in \o_{E}$. The proof is finished by comparing with \eqref{eq:standardinvolutionandfrobenius}.
\end{proof}

If $\varphi$ is any function on $B^\times$, we define $S_{K_\O}(\varphi) := \{x \in K_\O^\times: \varphi^x = \varphi\}$. Since $K_\O$ normalizes $U_\O^n$, we know that if $g_1, g_2 \in K_\O$, then
\begin{equation}\label{eq:rigiditynormalizerprincipalneighbourhood}
    g_1^{-1} \alpha U_\O^n g_1 \cap g_2^{-1}\alpha U_\O^n  g_2 \neq \varnothing \implies g_1^{-1}\alpha U_\O^n g_1 = g_2^{-1}\alpha U_\O^n g_2.
\end{equation}
Therefore, if $\alpha \in B^\times$ and $n \geq 0$ we have $S_{K_\O}(\mathbbm{1}_{\alpha U_\O^n}) = \{x \in K_\O^\times : x^{-1}\alpha x \in \alpha U_\O^n\}$. 
\begin{lem}\label{lem:centralizer}
Let $\alpha U_\O^n$ be as in Lemma \ref{lem:charpolyunramified} or Lemma \ref{lem:charpolyramified}. Then $S_{K_\O}(\mathbbm{1}_{\alpha U_\O^n}) = F[\alpha]^\times U_\O^n$. 
\end{lem}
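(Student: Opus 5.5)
We aim to prove $S_{K_\O}(\mathbbm{1}_{\alpha U_\O^n}) = F[\alpha]^\times U_\O^n$ by applying Lemma \ref{lem:basicgrouptheoreticlemma}. Set $G = K_\O$, $N = U_\O^n$ (normal in $K_\O$), $H_2 = S := S_{K_\O}(\mathbbm{1}_{\alpha U_\O^n})$ and $H_1 = F[\alpha]^\times U_\O^n$. By \eqref{eq:rigiditynormalizerprincipalneighbourhood}, $S = \{x \in K_\O : x^{-1}\alpha x \in \alpha U_\O^n\}$, and $S$ is a group.

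First we check $H_1 \subset S$. The group $F[\alpha]^\times$ lies in $K_\O$: in the unramified case ($\O\in\{\M,\O_\D\}$, $h(\alpha)\in Z_u$) and the ramified case ($\O\in\{\J,\O_\D\}$, $h(\alpha)\in Z_r$), Lemma \ref{lem:certainelementsareoptimal} gives $n_\O(\alpha)=0$, and then Lemma \ref{lem:adaptedembeddingsorders} gives $F[\alpha]^\times\subset K_\O$. Elements of $F[\alpha]^\times$ commute with $\alpha$. For $u\in U_\O^n$ we have $u^{-1}\alpha u = \alpha(\alpha^{-1}u^{-1}\alpha)u \in \alpha U_\O^n$, using normality of $U_\O^n$ in $K_\O \ni \alpha$. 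Hence $H_1 \subset S$, and $H_1N = H_1 \subset SN = S$.

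For the reverse containment, we use the lemma differently: it suffices to show $S \subset F[\alpha]^\times U_\O^n$, i.e. $SN = H_1N$ together with $S\cap N = H_1\cap N$ (the latter is trivial since $N\subset H_1$). The core step is therefore the claim that any $x\in K_\O$ with $x^{-1}\alpha x \in \alpha U_\O^n$ lies in $F[\alpha]^\times U_\O^n$.

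We prove this claim by induction on $n$, proceeding by successive approximation: if $x\in F[\alpha]^\times U_\O^{m}$ for some $m<n$, we improve to $m+1$. Replacing $x$ by $e^{-1}x$ with $e\in F[\alpha]^\times$, we may take $x = 1+y$ with $y\in\P_\O^m$ (or $x\in\O^\times$ when $m=0$). Then the condition reads $x\alpha' = \alpha x$ with $\alpha'\in \alpha U_\O^n$, i.e. $y\alpha-\alpha y \in \alpha\P_\O^{n}+\text{higher}$, so the commutator $[\alpha,y]$ is small. The key linear-algebra fact is then applied modulo $\P_\O^{m+1}$. On the graded piece $\P_\O^m/\P_\O^{m+1}$ (a $k(\P_\O)$-space), the map $y\mapsto \alpha^{-1}[\alpha,y]$ has kernel equal to the image of $\o[\alpha]\cap\P_\O^m$. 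In the unramified $\D$ case this is Lemma \ref{lem:frobeniusconjugationquaternion} together with the fact that $\alpha \bmod \P_\D$ generates $\mathbb{F}_{q^2}$. In the $\M$ case it is the statement that the centralizer of a matrix with irreducible characteristic polynomial mod $\p$ is $k(\p)[\bar\alpha]$. In the ramified cases we use the decomposition of Lemma \ref{lem:finerstructureramifiedorders}, with $\alpha=\varpi\gamma+\varpi_\O\delta$ and $\delta$ a unit, and compute that conjugation by $\alpha$ acts on $\P_\O^m/\P_\O^{m+1}\cong \o_{E}/\p_{E}$ via $\bar\delta^{\pm1}\sigma$-twisting, whose fixed space is the image of $F[\alpha]$. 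Thus $y\equiv e_0 \pmod{\P_\O^{m+1}}$ for some $e_0\in\o[\alpha]$, and factoring $1+e_0$ out gives $x\in F[\alpha]^\times U_\O^{m+1}$.

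The case $m=0$ (and, for $\O_\D$/$\J$, the odd-$v_\D$ part of $K_\O$) requires separate care. Here $x\in K_\O = \varpi_\O^\Z\O^\times$, and we reduce using $\varpi_\O$ or a uniformizer of $F[\alpha]$. In the ramified case $\alpha$ itself is such a uniformizer by Lemma \ref{lem:descriptionuniformizers}. In the unramified $\D$ case we use Lemma \ref{lem:frobeniusconjugationquaternion}: conjugation by an odd-valuation element acts by $\sigma$ on $\bar\alpha\notin\mathbb{F}_q$, which contradicts $x^{-1}\alpha x\equiv\alpha$ when $n\ge1$. For $n=0$ the claim is immediate, since $K_\D=\D^\times = F[\alpha]^\times U_\D$ up to parity, and the parity is handled by $\varpi_\D$. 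I expect this graded-commutator computation in the ramified cases, tracking exactly which powers of $\P_\O$ appear, to be the main obstacle.
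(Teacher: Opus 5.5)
Your proposal is correct, and its core mechanism is the paper's. First, a parity argument via Lemma~\ref{lem:frobeniusconjugationquaternion} reduces to $F[\alpha]^\times\O^\times$. Then an induction along the filtration $\P_\O^m$ shows at each level that elements commuting with $\alpha$ to the required order are approximated by $\o[\alpha]$. The differences are in packaging:
\begin{itemize}
\item The paper applies Lemma~\ref{lem:basicgrouptheoreticlemma} with $N = U_\D$. This genuinely splits the problem into a valuation statement $A_nU_\D = SU_\D$ and a unit statement $A_n\cap U_\D = S\cap U_\D$. Your application with $N = U_\O^n$ is vacuous: since $N\subset H_1\subset S$, the condition $SN = H_1N$ is literally $S = H_1$. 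You recover the same split through your $m=0$ step anyway.
\item The paper argues additively, proving $x\in\o[\alpha]+\P_\D^n$, rather than multiplicatively through $F[\alpha]^\times U_\O^m$. The two are equivalent for units.
\item The paper treats only $\O_\D$ and cites Bushnell--Henniart for $\M$ and $\J$. You handle all three orders directly, which is more self-contained.
\end{itemize}

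The step you expect to be the main obstacle is easier than your $\bar\delta^{\pm1}\sigma$-twisting suggests. Since $\alpha\in\varpi_\O\O^\times$, one has $\P_\O^m=\alpha^m\O$. Writing $y=\alpha^m z$ gives $\alpha y\alpha^{-1}-y=\alpha^m(\alpha z\alpha^{-1}-z)$. So every graded step reduces to the base case, where conjugation by $\alpha$ acts on $\O/\P_\O$ as $\sigma$: by Lemma~\ref{lem:frobeniusconjugationquaternion} for $\O_\D$, and as the swap of the factors of $k(\p)\times k(\p)$ for $\J$. The fixed part is the image of $\o$. Hence $y\in\alpha^m\o+\P_\O^{m+1}\subset\o[\alpha]+\P_\O^{m+1}$, which is exactly the paper's substitution $x=\beta+\alpha^{n-1}y$.

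One correction: your aside that the case $n=0$ is ``immediate'' is false in the unramified $\D$ case. There $\alpha U_\D^0=\O_\D^\times$ is normalized by all of $\D^\times$, so $\varpi_{\O_\D}\in S_{K_\D}(\mathbbm{1}_{\alpha U_\D^0})$. On the other hand, $F[\alpha]^\times U_\D$ contains only elements of even $v_\D$-valuation, so the parity cannot be ``handled by $\varpi_\D$''. This is exactly why Lemma~\ref{lem:charpolyunramified} assumes $n\geq 1$. The case lies outside the statement, so your proof of the lemma is unaffected, but the remark should be removed.
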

\begin{proof}
When $\O \in \{\M, \J\}$ this is proved in \cite[Lemma 16.2]{bushnellhenniart}. We will adapt their proof to the case $B = \D$ and $\O = \O_\D$. For the remainder of the proof we let $\varphi := \mathbbm{1}_{\alpha U_\O^n}$. Define $A_n := F[\alpha]^\times U_\D^n$. It is clear that $A_n \subset S_{\D^\times}(\varphi)$, and we want to show that this inclusion is an equality.  

Suppose first that $\alpha \in h_\O^{-1}(Z_r)$. Since $v_\D(\alpha) = 1$ we have $A_n U_\D = \D^\times= S_{\D^\times}(\varphi) U_\D$. Thus, by Lemma \ref{lem:basicgrouptheoreticlemma} it is enough to show that $A_n \cap U_\D = S_{\D^\times}(\varphi) \cap U_\D$. We know that $x \in S_{\D^\times}(\varphi)$ if and only if $x^{-1}\alpha x \in \alpha U_\D^n$, which if $x \in U_\D$ is also equivalent to 
\begin{equation}\label{auxeq:centralizer}
    \alpha x \alpha^{-1} \equiv x \modulo \P_\D^n.
\end{equation}
It is enough to show, by induction on $n\geq 1$, that if $x \in \O_\D$ satisfies \eqref{auxeq:centralizer}, then $x \in \o[\alpha] + \P_\D^n$. We first establish the base case $n = 1$. Since $v_\D(\alpha) = 1$, by Lemma \ref{lem:frobeniusconjugationquaternion} we have 
\begin{equation*}
    \alpha x \alpha^{-1} \equiv \overline{x} \modulo \P_\D. 
\end{equation*}  
If \eqref{auxeq:centralizer} holds, then we deduce that 
\begin{equation*}
    x \equiv \overline{x} \modulo \P_\D.
\end{equation*}
This implies that $x \in \o + \P_\D = \o[\alpha] + \P_\D$, proving the case $n = 1$.

Suppose that $n \geq 2$ and that the claim has been proven for $1 \leq m < n$. Suppose that $x \in \O_\D$ satisfies \eqref{auxeq:centralizer}. Then, by the induction hypothesis we have $x \in \o[\alpha] + \P_\D^{n-1}$. Writing $x = \beta + \alpha^{n-1} y$ with $\beta \in \o[\alpha]$ and $y \in \O_\D$, equation \eqref{auxeq:centralizer} reduces to 
\begin{equation*}
    \alpha y \alpha^{-1}  \equiv y \modulo \P_\D.
\end{equation*}
By the case $n = 1$ we have $y \in \o[\alpha] + \P_\D$, so 
\begin{equation*}
    x \in \o[\alpha] + \alpha^{n-1}(\o[\alpha] + \P_\D) \subset \o[\alpha] + \P_\D^n,
\end{equation*}
as desired.

Suppose now that $\alpha\in h_\O^{-1}(Z_u)$. In this case we have $A_n U_\D = \{x \in \D^\times: v_\D(x) \in 2\Z\}$. Since $\o[\alpha] \rightarrow \O_\D/\P_\D$ is surjective, Lemma \ref{lem:frobeniusconjugationquaternion} guarantees that if $x \in S_{\D^\times}(\varphi)$, then $v_\D(x)$ is even. Therefore, we have $A_n U_\D = S_{\D^\times}(\varphi) U_\D$. By Lemma \ref{lem:basicgrouptheoreticlemma} we are left to show that $A_n \cap U_\D = S_{\D^\times}(\varphi) \cap U_\D$.

In the same way as above, the problem reduces to showing the following claim: if equation \eqref{auxeq:centralizer} is true for some $x \in \O_\D$ and some $n \geq 1$, then $x \in \o[\alpha] + \P_\D^n$. We prove this claim by induction on $n$. The case $n = 1$ is clear since $\o[\alpha] + \P_\D = \O_\D$. For the case $n = 2$ write $x = \beta + y$ with $\beta \in \o[\alpha]$ and $y \in \P_\D$. Equation \eqref{auxeq:centralizer} reduces to $\alpha y \alpha^{-1} \equiv  y \modulo \P_\D^2$, which is equivalent to $y^{-1}\alpha y \equiv \alpha \modulo \P_\D$. Lemma \ref{lem:frobeniusconjugationquaternion} implies that $v_\D(y)$ is even. Since we already knew that $y \in \P_\D$, we deduce that $y \in \P_\D^2$, as desired. Suppose now that \eqref{auxeq:centralizer} holds for some $n \geq 3$, and that we have proved the claim for $1 \leq m < n$. Then, using the case $m = 2\lfloor (n-1)/2\rfloor < n$ we deduce that $x = \beta + \varpi^{\lfloor \frac{n-1}{2}\rfloor} y$ with $\beta \in \o[\alpha]$ and $y \in \O_\D$. Equation \eqref{auxeq:centralizer} reduces to $\alpha y \alpha^{-1}\equiv y \modulo \P_\D^{n - 2\lfloor \frac{n-1}{2}\rfloor}$. Since $1 \leq n - 2\lfloor \frac{n-1}{2}\rfloor \leq 2$, using the base cases of the induction we deduce that $y \in \o[\alpha] + \P_\D^{n - 2\lfloor \frac{n-1}{2}\rfloor}$, so that 
\begin{equation*}
    \alpha \in \o[\alpha] + \varpi^{\lfloor \frac{n-1}{2}\rfloor} \left(\o[\alpha] + \P_\D^{n - 2\lfloor \frac{n-1}{2}\rfloor}\right) \subset \o[\alpha] + \P_\D^n,
\end{equation*}
as desired.
\end{proof}

\begin{lem}\label{lem:indexofcentralizer}
Let $\alpha U_\O^n$ be as in Lemma \ref{lem:charpolyunramified} or Lemma \ref{lem:charpolyramified}. Then
\begin{equation*}
    [K_\O: F[\alpha]^\times U_\O^n] = [K_\O: F[\alpha]^\times \O^\times] \cdot \frac{[U_\O: U_\O^n]}{[U_{F[\alpha]}: U_{{F[\alpha]}}^{\lceil n \times (e_{F[\alpha]}/e_\O)\rceil}]}.
\end{equation*}
\end{lem}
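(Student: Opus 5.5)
The index splits along the chain $K_\O \supset F[\alpha]^\times \O^\times \supset F[\alpha]^\times U_\O^n$. I will use the second isomorphism theorem to compute the lower index as a quotient of unit-group indices, and then identify the intersection with $F[\alpha]$ directly. Write $L := F[\alpha]$ and fix the inclusion $L \subset B$. In every case of Lemma \ref{lem:charpolyunramified} and Lemma \ref{lem:charpolyramified}, Lemma \ref{lem:certainelementsareoptimal} gives $n_\O(\alpha) = 0$, so $L \cap \O = \o_L$ and hence $L^\times \cap \O^\times = \o_L^\times = U_L$.

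The key steps are as follows.

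\emph{Step 1.} By multiplicativity of indices,
\begin{equation*}
[K_\O : L^\times U_\O^n] = [K_\O : L^\times \O^\times]\,[L^\times \O^\times : L^\times U_\O^n].
\end{equation*}
Both products are subgroups because $U_\O^n$ and $\O^\times$ are normal in $K_\O$. By Lemma \ref{lem:centralizer}, $L^\times \subset K_\O$; alternatively this can be checked case by case via Lemma \ref{lem:adaptedembeddingsorders}.

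\emph{Step 2.} Since $U_\O^n \subset \O^\times$ and $U_\O^n$ is normal in $K_\O$, the map $\O^\times \to L^\times\O^\times / L^\times U_\O^n$ is surjective. Its kernel is $\O^\times \cap L^\times U_\O^n = (\O^\times \cap L^\times) U_\O^n = U_L U_\O^n$, where the middle equality is the modular law. Therefore
\begin{equation*}
[L^\times \O^\times : L^\times U_\O^n] = [U_\O : U_L U_\O^n] = \frac{[U_\O:U_\O^n]}{[U_L U_\O^n : U_\O^n]} = \frac{[U_\O:U_\O^n]}{[U_L : U_L \cap U_\O^n]}.
\end{equation*}

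\emph{Step 3 (the main point).} It remains to show $U_L \cap U_\O^n = U_L^{m}$ with $m = \lceil n e_L / e_\O \rceil$. For $n \ge 1$ this amounts to $\o_L \cap \P_\O^n = \p_L^{m}$. To prove it, I compare valuations: for $x \in L$ one has $v(\nrd x) = 2v_L(x)/e_L$ when $L$ is a field.
\begin{itemize}
\item For $\O = \O_\D$, $\P_\D^n$ consists of the elements with $v(\nrd x) \ge n$. This gives the condition $v_L(x) \ge n e_L/2$.
\item For $\O = \M$ and $L$ unramified, $\P_\M^n \cap L = \varpi^n \o_L = \p_L^n$.
\item For $\O = \J$ and $L$ ramified, Lemma \ref{lem:descriptionuniformizers} shows that a uniformizer $\varpi_L$ of $L$ lies in $\varpi_\J \J^\times$, so $\varpi_L \J = \P_\J$. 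Hence $\varpi_L^k \in \P_\J^k$, and conversely $\P_\J^n \cap L$ has valuation at least $n$ by the norm computation.
\end{itemize}
In each case one obtains exactly $\lceil n e_L/e_\O \rceil$. The case $n = 0$ is trivial, giving index $1$ with $m = 0$.

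The main obstacle is this exact description of $\o_L \cap \P_\O^n$. The $\J$ case, where one must check that $\P_\J^n = \varpi_L^n \J$, is the only mildly delicate point. The rest is formal group theory.
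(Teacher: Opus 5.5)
Your proof is correct and follows the paper's argument: the same factorization through $L^\times\O^\times$ (with $L=F[\alpha]$), the modular law $\O^\times\cap L^\times U_\O^n=\o_L^\times U_\O^n$, and the second isomorphism theorem, with your Step~3 supplying the case-by-case check of $\o_L^\times\cap U_\O^n=U_L^{\lceil n e_L/e_\O\rceil}$ that the paper only says ``can be checked directly''. One wording slip: $L^\times U_\O^n$ need not be normal in $L^\times\O^\times$, so $L^\times\O^\times/L^\times U_\O^n$ is only a coset space and $\O^\times\cap L^\times U_\O^n$ is the stabilizer of the trivial coset rather than a kernel, but the fibres are still its cosets and the count is unchanged.
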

\begin{proof}
Let $A_n = F[\alpha]^\times U_\O^n$. We have $[K_\O: A_n] = [K_\O: A_n\O^\times][\O^\times : A_n \cap \O^\times]$. 
Since, $A_n \cap \O^\times = \o[\alpha]^\times U_\O^n$ we see that 
\begin{equation*}
    [\O^\times : A_n \cap \O^\times] = \frac{[\O^\times: U_\O^n]}{[ \o[\alpha]^\times U_\O^n: U_\O^n]}.
\end{equation*}
To compute the denominator we note that $\o[\alpha]^\times U_\O^n/U_\O^n \simeq \o[\alpha]^\times / \left(\o[\alpha]^\times \cap U_\O^n\right)$ and that we have $\o[\alpha]^\times \cap U_\O^n = U_{{F[\alpha]}}^{\lceil n \times (e_{F[\alpha]}/e_\O)\rceil}$, which can be checked directly.
\end{proof}

\subsubsection{Orbital integrals of characteristic functions of principal neighbourhoods.}
We are now ready to compute orbital integrals for principal neighbourhoods. 

\begin{prop}\label{prop:orbitalintegralneighbourhood}
Let $\alpha U_\O^n$ as in Lemma \ref{lem:charpolyunramified} or Lemma \ref{lem:charpolyramified}. Then 
\begin{equation}\label{eq:orbitalintegralneighbourhood}
    \O_{B^\times}(\mathbbm{1}_{\alpha U_\O^n}; E, x) = \frac{\vol_{B^\times}(U_\O^n)}{\vol_{E^\times}(U_E^{\lceil n \times (e_{E}/e_\O)\rceil})} \cdot \mathbbm{1}_{h(\alpha U_\O^n)}(h(x)).
\end{equation} 
\end{prop}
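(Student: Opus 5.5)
The plan is to reduce to Lemma \ref{lem:orbitalintegralOnadapted}, applied not to $\alpha U_\O^n$ itself but to its $K_\O$-conjugation orbit. Let $A := \bigcup_{g \in K_\O} g^{-1}\alpha U_\O^n g$.

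First we check the hypotheses of Lemma \ref{lem:orbitalintegralOnadapted} for $A$.
\begin{itemize}
\item Conjugation invariance under $K_\O$ holds by construction.
\item Every element of $\alpha U_\O^n$ satisfies $h(\beta) \in h(\alpha U_\O^n) \subset Z_u$ or $Z_r$, by Lemmas \ref{lem:charpolyunramified} and \ref{lem:charpolyramified}. Here we use that $n \geq 1$ in the unramified case, while $h(\alpha U_\O)=Z_r$ in the ramified case.
\item Since $\alpha U_\O^n \subset \O$, Lemma \ref{lem:certainelementsareoptimal} gives $n_\O(\beta) = 0$ for all $\beta \in A$, and all these elements are regular.
\end{itemize}
Since $h$ is conjugation invariant, $h(A) = h(\alpha U_\O^n)$. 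Lemma \ref{lem:orbitalintegralOnadapted} then gives
\begin{equation*}
\O_{B^\times}(\mathbbm{1}_A; E, x) = \frac{\vol_{B^\times}(\O^\times)}{\vol_{E^\times}(\o_E^\times)}\, m(\o_E; \O, \O^\times)\, \mathbbm{1}_{h(\alpha U_\O^n)}(h(x)).
\end{equation*}
We may take $E = F[\alpha]$, since otherwise $h(x) \notin h(\alpha U_\O^n)$ and both sides vanish.

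Next we relate $\mathbbm{1}_A$ to $\mathbbm{1}_{\alpha U_\O^n}$. By \eqref{eq:rigiditynormalizerprincipalneighbourhood}, $A$ is a disjoint union of the distinct conjugates $g^{-1}\alpha U_\O^n g$, indexed by $S_{K_\O}(\mathbbm{1}_{\alpha U_\O^n})\backslash K_\O$. Orbital integrals are conjugation invariant, so each conjugate contributes the same amount. Hence
\begin{equation*}
\O(\mathbbm{1}_{\alpha U_\O^n}) = \O(\mathbbm{1}_A)\,/\,[K_\O : S_{K_\O}(\mathbbm{1}_{\alpha U_\O^n})].
\end{equation*}
By Lemma \ref{lem:centralizer}, $S_{K_\O}(\mathbbm{1}_{\alpha U_\O^n}) = F[\alpha]^\times U_\O^n$, and Lemma \ref{lem:indexofcentralizer} computes this index as
\begin{equation*}
[K_\O : F[\alpha]^\times\O^\times]\cdot \frac{[U_\O:U_\O^n]}{[U_E:U_E^{\lceil n e_E/e_\O\rceil}]}.
\end{equation*}

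Finally we evaluate the embedding number. Whenever $h(\alpha U_\O^n)$ is nonempty, $E = F[\alpha]$ is unramified with $\O \in \{\M, \O_\D\}$, or ramified with $\O \in \{\J, \O_\D\}$. Lemma \ref{lem:adaptedembeddingsorders} then gives $\iota^{-1}(K_\O) = E^\times$, so Lemma \ref{lem:optimalembeddingsOtimes} yields
\begin{equation*}
m(\o_E; \O, \O^\times) = [K_\O : \iota(E^\times)\O^\times].
\end{equation*}
This cancels the first factor of the index. What remains is
\begin{equation*}
\frac{\vol(\O^\times)}{[U_\O:U_\O^n]}\cdot\frac{[U_E:U_E^{\lceil n e_E/e_\O\rceil}]}{\vol(\o_E^\times)} = \frac{\vol(U_\O^n)}{\vol(U_E^{\lceil n e_E/e_\O\rceil})},
\end{equation*}
which is the formula claimed.

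The main obstacle is the bookkeeping in this last step. One must confirm that the embedding $E \simeq F[\alpha] \subset B$ used in Lemma \ref{lem:optimalembeddingsOtimes} is the same one identified in Lemma \ref{lem:indexofcentralizer}, so that the factors $[K_\O: F[\alpha]^\times\O^\times]$ cancel exactly. One must also check that $\o_E^\times = U_E$ carries the Haar normalization used in the statement. The remaining cases are handled by the fact that both sides vanish when $h(x)\notin h(\alpha U_\O^n)$.
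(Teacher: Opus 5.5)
Your proposal is correct and follows the paper's proof essentially step for step. Your $A$, the $K_\O$-orbit of $\alpha U_\O^n$, coincides with the paper's $h_\O^{-1}(h(\alpha U_\O^n))$ by Lemma \ref{lem:conjugacyBtimesandKO}, and you then use \eqref{eq:rigiditynormalizerprincipalneighbourhood}, Lemma \ref{lem:centralizer}, Lemmas \ref{lem:optimalembeddingsOtimes} and \ref{lem:adaptedembeddingsorders}, and Lemma \ref{lem:indexofcentralizer} exactly as the paper does.

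One small fix is needed. The reduction to $E = F[\alpha]$ should not rest on ``otherwise $h(x)\notin h(\alpha U_\O^n)$'': this is false in general, e.g.\ in the ramified case with $n=0$, where $h(\alpha U_\O)=Z_r$ contains characteristic polynomials of non-isomorphic ramified extensions. Instead, re-center, as the paper does: pick $\beta\in\alpha U_\O^n$ with $h(\beta)=h(x)$ and replace $\alpha$ by $\beta$, using $\beta U_\O^n=\alpha U_\O^n$.
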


\begin{proof}
Let $A := h_\O^{-1}(h(\alpha U_\O^n))$, so that in particular we have $h(A) = h(\alpha U_\O^n)$. By Lemma \ref{lem:certainelementsareoptimal}, if $x \in A$ then $n_\O(x) = 0$. It is also clear that $A$ is $K_\O$-invariant under conjugation. Thus, the set $A$ satisfies the hypothesis of Lemma \ref{lem:orbitalintegralOnadapted}, so that 
\begin{equation}\label{eq:orbitalintegralofA}
\O(\mathbbm{1}_A; E, x) = \frac{\vol_{B^\times}(\O^\times)}{\vol_{E^\times}(\o_E^\times)}\cdot m(\o_{E}; \O, \O^\times) \cdot \mathbbm{1}_{h(A)}(h(x)).
\end{equation} 
By Lemma \ref{lem:conjugacyBtimesandKO}, $A$ is the set of elements of $B_{\reg}$ which are $K_\O$-conjugate to some element in $\alpha U_\O^n$. We can assume that $h(x) \in h(\alpha U_\O^n)$, since otherwise both sides of \eqref{eq:orbitalintegralneighbourhood} vanish. In this case there is $\iota \in \Emb(\o_E; \O)$ such that $\iota(x) \in \alpha U_\O^n$, so without loss of generality we can assume that $E = F[\alpha]$ and $x = \alpha$. By equation \eqref{eq:rigiditynormalizerprincipalneighbourhood} and Lemma \ref{lem:centralizer} we find that
\begin{equation*}
    \mathbbm{1}_{A}(x) = \sum_{g \in F[\alpha]^\times U_\O^n \backslash K_\O} \mathbbm{1}_{\alpha U_\O^n}(g^{-1}xg).
\end{equation*}
Since orbital integrals are conjugation-invariant, we deduce from \eqref{eq:orbitalintegralofA} that 
\begin{equation}\label{auxeq:orbitalintegralneighbourhood}
\O(\mathbbm{1}_{\alpha U_\O^n}; E, x) =  \frac{\vol_{B^\times}(\O^\times)}{[K_\O: F[\alpha]^\times U_\O^n]\vol_{E^\times}(\o_E^\times)}\cdot m(\o_{E}; \O, \O^\times) \cdot \mathbbm{1}_{h(A)}(h(x)).
\end{equation}
By Lemma \ref{lem:optimalembeddingsOtimes} and Lemma \ref{lem:adaptedembeddingsorders} we know that
\begin{equation*}
    m(\o_E; \O, \O^\times) = [K_\O: \iota(E^\times)\O^\times]. 
\end{equation*}
The proof of the lemma is finished by appealing to Lemma \ref{lem:indexofcentralizer} and simplifying the resulting formula. 
\end{proof}

\begin{lem}\label{lem:volumeprincipalsubgroups}
If $n \geq 1$ and $\O \in \{\M, \J, \O_\D\}$ then
\begin{equation*}
    \vol(U_\O^n) = q^{\frac{4}{e_\O}}\vol(U_\O^{n + 1}).
\end{equation*}    
\end{lem}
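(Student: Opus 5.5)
Since $U_\O^{n+1}$ is a subgroup of $U_\O^n$ and the Haar measure is translation invariant, we have $\vol(U_\O^n) = [U_\O^n : U_\O^{n+1}]\vol(U_\O^{n+1})$. The lemma therefore reduces to computing this index and showing that $[U_\O^n : U_\O^{n+1}] = q^{4/e_\O}$ for $n \geq 1$.

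For $n \geq 1$ we have $U_\O^n = 1 + \P_\O^n$, and the map $1 + x \mapsto x \modulo \P_\O^{n+1}$ is a surjective group homomorphism $U_\O^n \to \P_\O^n/\P_\O^{n+1}$ with kernel $U_\O^{n+1}$. Surjectivity is clear. The homomorphism property follows from $(1+x)(1+y) = 1 + x + y + xy$ with $xy \in \P_\O^{2n} \subset \P_\O^{n+1}$ because $n \geq 1$. Consequently $[U_\O^n : U_\O^{n+1}] = \#(\P_\O^n/\P_\O^{n+1})$.

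It remains to count this quotient in each of the three cases, using that $\P_\O = \varpi_\O \O = \O \varpi_\O$ is principal. Multiplication by $\varpi_\O^n$ gives an isomorphism of additive groups $\O/\P_\O \simeq \P_\O^n/\P_\O^{n+1}$, so $\#(\P_\O^n/\P_\O^{n+1}) = \#(\O/\P_\O)$.

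\begin{itemize}
\item For $\M$, we have $\varpi_\M = \varpi$ and $\M/\P_\M \simeq M_2(k(\p))$, which has $q^4$ elements. This gives $q^{4/1}$.
\item For $\J$, we have $\J/\P_\J \simeq k(\p) \times k(\p)$, which has $q^2$ elements. This gives $q^{4/2}$.
\item For $\O_\D$, we have $\O_\D/\P_\D \simeq \mathbb{F}_{q^2}$, which has $q^2$ elements. This gives $q^{4/2}$.
\end{itemize}

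These facts were recalled before Definition \ref{defi:principalneighbourhoods}, and $\P_\O = \varpi_\O \O$ comes from Lemma \ref{lem:finerstructureramifiedorders} (for $\M$ it is just $\varpi\M$). In all three cases the index equals $q^{4/e_\O}$, which proves the lemma.

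There is no real obstacle here. The only care needed is to check that the hypothesis $n \geq 1$ is what makes the map a homomorphism. For $n = 0$ the statement fails, since $U_\O^0 = \O^\times$ is not of the form $1 + \P_\O^0$.
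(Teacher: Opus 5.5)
Your proposal is correct and follows essentially the same route as the paper: the isomorphism $U_\O^n/U_\O^{n+1} \simeq \P_\O^n/\P_\O^{n+1}$ via $x \mapsto 1+x$ (which uses $n \geq 1$), then $\O/\P_\O \simeq \P_\O^n/\P_\O^{n+1}$ via multiplication by $\varpi_\O^n$, and finally $\#(\O/\P_\O) = q^{4/e_\O}$. The only cosmetic difference is that the paper phrases the last step as $\dim_{k(\p)}(\O/\P_\O) = 4/e_\O$, whereas you count cardinalities case by case.
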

\begin{proof}
Since $n \geq 1$, the map $x \mapsto 1 + x$ induces an isomorphism of groups $\P_\O^{n}/\P_\O^{n + 1} \simeq U^{n}_\O/ U^{n + 1}_\O$. Also, the map $x \mapsto \varpi_\O^n x$ induces an isomorphism $\O/\P_\O \simeq \P_\O^{n}/\P_{O}^{n+1}$ of $k(\p)$-vector spaces. Finally, since $\dim_{k(\p)}(\O/\P_\O) = 4/e_\O$ we conclude that $[U^{n}_\O: U^{n + 1}_\O] = q^{4/e_\O}$, which is equivalent to the assertion of the lemma.
\end{proof}

We are now ready to give the proof of Proposition \ref{prop:mymatching}.

\subsubsection{Proof of Proposition \ref{prop:mymatching}.}\label{subsubsec:finishingtheproofoflocalmatching}
Recall Definition \ref{defi:filtrationunitsdivision}. Let $\alpha \in \D \smallsetminus F$ and $n \geq 0$ such that $d(\alpha)< n$. Suppose first that $d(\alpha) = -1$. Then, there exists $t \in F^\times$ such that $\alpha = t \alpha_0$, where $v_\D(\alpha_0) = 1$. By Lemma \ref{lem:preimagesramifiedunramified} we have $\alpha_0 \in h_{\O_\D}^{-1}(Z_r)$. Let $\iota \in \Emb(\o[\alpha_0]; \J)$. We know that $\iota$ exists by Lemma \ref{lem:optimalembeddingsKO}. By Lemma \ref{lem:charpolyramified} and Proposition \ref{prop:orbitalintegralneighbourhood} we deduce the matching
\begin{equation*}
\frac{1}{\vol(U_\D^n)}\mathbbm{1}_{\alpha_0 U_\D^n} \leftrightarrow \frac{1}{\vol(U_\J^n)}\mathbbm{1}_{\iota(\alpha_0) U_\J^n},
\end{equation*}
and by equation \eqref{eq:transformationorbital} we see that 
\begin{equation*}
    \frac{1}{\vol(U_\D^n)}\mathbbm{1}_{\alpha U_\D^n} \leftrightarrow \frac{1}{\vol(U_\J^n)}\mathbbm{1}_{\iota(\alpha) U_\J^n},
\end{equation*}
as desired. Suppose now that $d(\alpha) = 2k + 1$ with $k \geq 0$. Then $\alpha = t(1 + \varpi^k \alpha_0)$ for some $t \in F^\times$ and $\alpha_0 \in h_\D^{-1}(Z_r)$. Let $\iota \in \Emb(\o[\alpha_0]; \J)$.
By Lemma \ref{lem:preimagesramifiedunramified} we have $\alpha_0 \in \varpi_\D \O_\D^\times$ and $\iota(\alpha_0) \in \varpi_\J \J^\times$. Note that $\P_\O^\ell = \varpi_\O^{\ell}\O$ for any $\ell$ . 
It follows that 
\begin{equation}\label{eq:transformationsetramified}
    \alpha U_\D^n = t(1 + \varpi^k \alpha_0 U_\D^{n-2k-1}), \quad \iota(\alpha)U_\J^n = t(1 + \varpi^k \iota(\alpha_0)U_\J^{n-2k-1}).
\end{equation}
By Lemma \ref{lem:charpolyramified} and Proposition \ref{prop:orbitalintegralneighbourhood} we have the matching
\begin{equation*}
\frac{1}{\vol(U_\D^{n-2k-1})}\mathbbm{1}_{\alpha_0 U_\D^{n-2k-1}} \leftrightarrow \frac{1}{\vol(U_\J^{n-2k-1})}\mathbbm{1}_{\iota(\alpha_0) U_\J^{n-2k-1}}.
\end{equation*}
By equations \eqref{eq:transformationorbital} and \eqref{eq:transformationsetramified} and Lemma \ref{lem:volumeprincipalsubgroups} we see that 
\begin{equation*}
    \frac{1}{\vol(U_\D^{n})}\mathbbm{1}_{\alpha U_\D^n} \leftrightarrow \frac{1}{\vol(U_\J^{n})}\mathbbm{1}_{\iota(\alpha) U_\J^n},
\end{equation*}
as desired. Suppose now that $d(\alpha) = 0$. Then we can write $\alpha = t \alpha_0$ with $t \in F^\times$ and $\alpha_0 \in h_\D^{-1}(Z_u)$. Let $\iota \in \Emb(\o[\alpha_0]: \M)$. By Lemma \ref{lem:charpolyunramified} and Proposition \ref{prop:orbitalintegralneighbourhood} we have the matching
\begin{equation*}
\frac{1}{\vol(U_\D^n)}\mathbbm{1}_{\alpha_0 U_\D^n} \leftrightarrow \frac{1}{\vol(U_\M^{\lceil \frac{n}{2}\rceil})}\mathbbm{1}_{\iota(\alpha_0) U_\M^{\lceil \frac{n}{2}\rceil}}.
\end{equation*}
By equation \eqref{eq:transformationorbital} we see that 
\begin{equation*}
\frac{1}{\vol(U_\D^n)}\mathbbm{1}_{\alpha U_\D^n} \leftrightarrow \frac{1}{\vol(U_\M^{\lceil \frac{n}{2}\rceil})}\mathbbm{1}_{\iota(\alpha) U_\M^{\lceil \frac{n}{2}\rceil}},
\end{equation*}
as desired. The remaining case is $d(\alpha) = 2k$ with $k \geq 1$. In this case we can write $\alpha = t(1 + \varpi^k \alpha_0)$ with $t \in F^\times$ and $\alpha_0 \in h_\D^{-1}(Z_u)$. Let $\iota \in \Emb(\o[\alpha_0]; \O)$. Since $\alpha_0 \in \O_\D^\times$ and $\iota(\alpha_0) \in \M^\times$ we see that 
\begin{equation}\label{eq:transformationsetunramified}
    \alpha U_\D^n = t(1 + \varpi^k \alpha_0 U_\D^{n-2k}), \quad \iota(\alpha)U_\M^{\lceil \frac{n}{2}\rceil} = t(1 + \varpi^k \iota(\alpha_0)U_\M^{\lceil \frac{n}{2}\rceil - k}).
\end{equation}
By Lemma \ref{lem:charpolyunramified} and Proposition \ref{prop:orbitalintegralneighbourhood} we have the matching 
\begin{equation*}
    \frac{1}{\vol(U_\D^{n-2k})}\mathbbm{1}_{\alpha_0 U_\D^{n - 2k}} \leftrightarrow \frac{1}{\vol(U_\M^{\lceil \frac{n}{2}\rceil - k})}\mathbbm{1}_{\iota(\alpha_0) U_\M^{\lceil \frac{n}{2}\rceil - k}}.
\end{equation*} 
Finally, by equations \eqref{eq:transformationorbital} and \eqref{eq:transformationsetunramified} and Lemma \ref{lem:volumeprincipalsubgroups} we see that
\begin{equation*}
    \frac{1}{\vol(U_\D^{n})}\mathbbm{1}_{\alpha U_\D^{n}} \leftrightarrow \frac{1}{\vol(U_\M^{\lceil \frac{n}{2}\rceil})}\mathbbm{1}_{\iota(\alpha) U_\M^{\lceil \frac{n}{2}\rceil}},
\end{equation*} 
as desired.

\section{Global arguments.}\label{sec:globalarguments}
\subsection{Notation and background.}
In this section the context is global. We let $\Pl$ denote the set of different places of $\Q$. We can write $\Pl = \Primes \sqcup \{\infty\}$, where $\Primes$ is the set of nonarchimedean places of $\Q$ and $\infty$ is the only archimedean place. We also say that the places in $\Primes$ are the \emph{finite} places, while $\infty$ is the \emph{infinite} place of $\Q$. If $v \in \Pl$, we let $\Q_v$ be the completion of $\Q$ with respect to an absolute value $|\cdot|_v$ in the equivalence class of $v$. Thus
\begin{equation*}
    \Q_v = \begin{dcases}
        \Q_p, & \text{ if }v = (p) \in \Primes,\\
        \R, & \text{ if }v = \infty.
    \end{dcases}
\end{equation*}

Let $B$ be a quaternion algebra over $\Q$. We say that $B$ splits at $v \in \Pl$ if $B\otimes_\Q \Q_v \simeq M_2(\Q_v)$. Otherwise we say that $B$ is ramified at $v$. We let $\Ram(B) \subset \Pl$ be the set of ramified places. We know that $\Ram(B)$ is a finite set of even cardinality. Furthermore, $\Ram(B) = \varnothing$ if and only if $B \simeq M_2(\Q)$. The set $\Ram(B)$ is a complete invariant of $B$, in the sense that $\Ram(B_1) = \Ram(B_2)$ if and only if $B_1 \simeq B_2$ as $\Q$-algebras. See \cite[Main Theorem 14.6.1]{voight} for these assertions.

From now on we assume that $B$ is \emph{indefinite}, which means that $B$ splits at $\infty$. Therefore, $\Ram(B)$ consists of an even number of finite primes. We let $\O$ denote a maximal order on $B$. Since $B$ is indefinite, we know that $\O$ is unique up to conjugation by an element of $B^\times$ \cite[Corollary 28.5.10]{voight}.

Just as in the local case, $B$ comes equipped with the reduced trace $\trd: B \rightarrow \Q$ and reduced norm $\nrd: B\rightarrow \Q$ maps, as well as with the standard involution $x \mapsto \overline{x}:= \trd(x) - x$, with the usual properties. In particular, we note that $\O^\times = \O \cap \nrd^{-1}(\Z^\times)$. We let $B^1 := \ker(\nrd) = \{x \in B^\times : \nrd(x) = B^1\}$, and define $\O^1 := \O \cap B^1$. Since $B$ is indefinite, the isomorphism $B \otimes_\Q \R \simeq M_2(\R)$ induces an embedding $\iota_\infty: \O^1 \hookrightarrow \SL_2(\R)$. We know that $\iota(\O^1) \subset \SL_2(\R)$ is discrete and cofinite. Furthermore, $\iota(\O^1)\backslash \SL_2(\R)$ compact unless $B \simeq M_2(\Q)$ \cite[Theorem 38.1.3]{voight}.

As is customary, if $v \in \Pl$ we let $B_v := B \otimes_\Q \Q_v$. When $v$ is a finite place, we let $\O_v := \O \otimes_\Z \Z_v$. We also consider the adelic version of the previous objects. We use the notation $\A$ for the adeles over $\Q$, and $\A^\times$ for the ideles. We have restricted products
\begin{equation*}
    B_\A^\times := \sideset{}{'}\prod_{v \in \Pl}(B_v^\times, \O_v^\times), \quad B_\A^1 := \sideset{}{'}\prod_{v \in \Pl}(B_v^1, \O_v^1).
\end{equation*}
When the product is only over finite places we use the notation
\begin{equation*}
    \widehat{B}^\times := \sideset{}{'}\prod_{p \in \Primes}(B_p^\times, \O_p^\times), \quad \widehat{B}^1 := \sideset{}{'}\prod_{p \in \Primes}(B_p^1, \O_p^1).
\end{equation*}
We also let 
\begin{equation*}
    \widehat{\O}^\times := \prod_{p \in \Primes} \O_p^\times, \quad \widehat{\O}^1 := \prod_{p \in \Primes} \O_p^1,
\end{equation*}
which are maximal compact subgroups of $\widehat{B}^\times$ and $\widehat{B}^1$, respectively. For each $v$ we have a canonical embedding $\iota_v: B^\times_v \hookrightarrow B_\A^\times$, so that 
\begin{equation*}
    (\iota_v(x))_{w} = \begin{dcases}
        x, & \text{ if } v = w,\\
        1, & \text{ otherwise},
    \end{dcases}
\end{equation*}
and analogously for other adelic groups. We also have a \emph{diagonal embedding} $\iota_{\diag}: B^\times \hookrightarrow B_\A^\times$ such that $(\iota_{\diag}(x))_v = x$ for any $v \in \Pl$. If $B_v^\times$ or $B^\times$ are considered as subgroups of $B_\A^\times$ it will be via these embeddings, and similarly for other groups (e.g. for $B_\A^1$ or $E_\A^\times$ below). Thus, we may write $B_\infty^\times \subset B_\A^\times$ and $B^\times \subset B_\A^\times$ instead of $\iota_\infty^\times(B_\infty^\times) \subset B_\A^\times$ and $\iota_{\diag}(B^\times)\subset B_\A^\times$, respectively.

For a positive integer $q = \prod_p p^{n_p}$ we define principal congruence subgroups
\begin{equation*}
        \widehat{\O}^\times(q) := \prod_{p \in \Primes} \O_p^\times(p^{n_p}), \quad \widehat{\O}^1(q) := \prod_{p \in \Primes} \O_p^1(p^{n_p}),
\end{equation*}
where $\O_p^\times(p^{n_p}) = \O_p^\times \cap (1 + p^{n_p}\O_p)$ and $\O_p^1(p^{n_p}) = \O_p^1 \cap \O_p^\times(p^{n_p})$, as in Definition \ref{defi:subgroupsdefininglevel}. Globally we define $\O^1(q) := \O^1 \cap (1 + q\O)$. By strong approximation for $\O^1$ \cite[Corollary 28.5.14]{voight} we have isomorphisms
\begin{equation}\label{eq:chineseremainderidentification}
\begin{tikzcd}
(\O/q\O)^1 & \O^1(q)\backslash \O^1 \arrow[l, "\simeq"'] \arrow[r, "\simeq"] & \widehat{\O}^1(q)\backslash \widehat{\O}^1
\end{tikzcd}.
\end{equation}
The first arrow is induced by reduction modulo $q$, and the second by the diagonal embedding $\O^1\hookrightarrow \widehat{\O}^1$. Using this isomorphisms we obtain a natural bijection between functions $\varphi$ on $(\O/q\O)^1$ and functions $\widehat{\varphi}$ on $\widehat{\O}^1$ of level $q$, that is, which are $\widehat{\O}^1(q)$-invariant by left and right translation. By the Chinese remainder theorem 
\begin{equation*}
    (\O/q\O)^1 \simeq \prod_{p \mid q} (\O/p^{n_p}\O)^1.
\end{equation*}
We say that a function $\varphi$ on $(\O/q\O)^1$ is \emph{factorizable} if it factors as a product of functions on $(\O/p^{n_p}\O)^1$ via this isomorphism. Via the isomorphisms in \eqref{eq:chineseremainderidentification}, factorizable functions on $(\O/q\O)^1$ correspond to functions $\widehat{\varphi} = \prod_p \varphi_p$ obeying the formula $\widehat{\varphi}((x_p)_p) = \prod_p \varphi_p(x_p)$, where $\varphi_p$ is a function on $\O_p^1$ which is invariant under translation by elements of  $\O_p^1(p^{n_p})$.

Recall Definition \ref{defi:stablefunction}. Using the isomorphisms in \eqref{eq:chineseremainderidentification} we see that stable functions $\varphi$ on $(\O/q\O)^1$ correspond to functions $\widehat{\varphi}$ which are invariant under conjugation by $\widehat{\O}^\times$.

\subsubsection{Strong approximation and its consequences.}
Let $\widehat{\iota}$ denote the diagonal embedding $B^1 \hookrightarrow \widehat{B}^1$. This is the composition of $\iota_{\diag}: B^1 \hookrightarrow B_\A^1$ with the natural projection $B_\A^1 \rightarrow \widehat{B}^1$. 
\begin{lem}\label{lem:strongapproxB1general}
Let $B$ be an indefinite quaternion algebra over $\Q$. Then $\widehat{\iota}(B^1)$ is dense in $\widehat{B}^1$.
\end{lem}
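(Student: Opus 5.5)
This is the strong approximation theorem for $B^1$, and the plan is to deduce it from the general strong approximation theorem for simply connected, almost simple groups. Alternatively, I would cite \cite[Theorem 28.5.5]{voight} directly, together with its corollary \cite[Corollary 28.5.14]{voight}, which the paper already uses for \eqref{eq:chineseremainderidentification}.

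Concretely, $B^1$ is the group of $\Q$-points of $G = \mathrm{SL}_1(B)$, which is an inner form of $\SL_2$. In particular it is simply connected and absolutely almost simple over $\Q$. Strong approximation (Kneser, Platonov) says the following: if $S$ is a finite set of places and $G(\Q_S)$ is noncompact, then $G(\Q)$ is dense in $G(\A^S)$, the restricted product over the places outside $S$. I would take $S = \{\infty\}$. The noncompactness hypothesis is exactly where indefiniteness enters: $B_\infty \simeq M_2(\R)$, so $G(\R) \simeq \SL_2(\R)$, which is noncompact. Since $G(\A^{\{\infty\}}) = \widehat{B}^1$ and the embedding of $G(\Q)$ is the diagonal map $\widehat{\iota}$, the lemma follows.

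The only real content is checking that the hypotheses hold, so the step that needs care is matching the paper's objects with those in the theorem. First, $B^1$ must really be the $\Q$-points of a simply connected group; this holds because $G$ becomes $\SL_2$ over $\overline{\Q}$. Second, the restricted product defining $\widehat{B}^1$ must be taken with respect to $\O_p^1$, which agrees with the integral structure coming from the model of $G$ over $\Z$ determined by $\O$, at almost all $p$. If one prefers to avoid the general theorem, I would follow Voight's more hands-on argument instead. It shows that the closure of $B^1$ in $\widehat{B}^1$ contains the unipotent-type subgroups $1 + p^k\O_p$ intersected with the norm-one elements at each split prime, using the fact that $B^1$ is dense in $B_p^1$ (weak approximation) together with the fact that these subgroups generate. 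I expect this alternative route would be the main obstacle if it had to be written out. Since the statement is classical, I would simply cite it.
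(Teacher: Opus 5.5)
Your proposal is correct and is essentially the paper's argument. The paper simply cites strong approximation in the form \cite[Theorem 28.5.3]{voight}, and your specialization of the Kneser--Platonov theorem to the simply connected group $\SL_1(B)$ with $S=\{\infty\}$, using indefiniteness to get $B_\infty^1\simeq\SL_2(\R)$ noncompact, is exactly what that citation provides. Your hands-on alternative would not work as sketched, because it never uses noncompactness at $\infty$: weak approximation only gives density in finite products $\prod_{p\in T}B_p^1$, not integrality at all other primes, and it holds for definite algebras where the lemma fails. You rightly do not rely on it.
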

\begin{proof}
This is a particular case of \cite[Theorem 28.5.3]{voight}.
\end{proof}

\begin{cor}\label{cor:strongapproxB1}
Let $B$ be an indefinite quaternion algebra over $\Q$. Then, we have 
\begin{equation*}
    B_\A^1 = B^1 B_\infty^1 \widehat{\O}^1.
\end{equation*}
\end{cor}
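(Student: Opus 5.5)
We deduce Corollary \ref{cor:strongapproxB1} from Lemma \ref{lem:strongapproxB1general} by the standard open-subgroup argument. The inclusion $B^1 B_\infty^1 \widehat{\O}^1 \subset B_\A^1$ is clear, since each factor is a subgroup of $B_\A^1$, so only the reverse inclusion needs proof. Here $B^1$ is embedded diagonally, $B_\infty^1$ sits in the archimedean coordinate, and $\widehat{\O}^1$ sits in the finite coordinates. Since $B_\A^1 = B_\infty^1 \times \widehat{B}^1$ as topological groups, and $B_\infty^1$ commutes with the finite part, it suffices to show that every element of $B_\A^1$ can be written as $\gamma g_\infty k$ with $\gamma \in B^1$, $g_\infty \in B_\infty^1$ and $k \in \widehat{\O}^1$.

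Take $x = (x_\infty, \widehat{x}) \in B_\A^1$ with $\widehat{x} \in \widehat{B}^1$. The subgroup $\widehat{\O}^1 = \prod_p \O_p^1$ is open in $\widehat{B}^1$, because it is the restricted product of the compact open subgroups $\O_p^1 \subset B_p^1$. Hence the coset $\widehat{x}\,\widehat{\O}^1$ is an open subset of $\widehat{B}^1$. By Lemma \ref{lem:strongapproxB1general}, $\widehat{\iota}(B^1)$ is dense in $\widehat{B}^1$, so there exists $\gamma \in B^1$ with $\widehat{\iota}(\gamma) \in \widehat{x}\,\widehat{\O}^1$. Equivalently, $\widehat{\iota}(\gamma) = \widehat{x} k^{-1}$ for some $k \in \widehat{\O}^1$, that is, $\widehat{x} = \widehat{\iota}(\gamma)\, k$.

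Now set $g_\infty := \gamma^{-1} x_\infty \in B_\infty^1$, where $\gamma$ is viewed in $B_\infty^1$ via $B \subset B_\infty$. Then in $B_\A^1$ we compute
\begin{equation*}
\iota_{\diag}(\gamma)\, \iota_\infty(g_\infty)\, k = (\gamma \gamma^{-1} x_\infty,\ \widehat{\iota}(\gamma) k) = (x_\infty, \widehat{x}) = x.
\end{equation*}
This gives the reverse inclusion and proves the corollary. Nothing here is a real obstacle: the only points to check are that $\widehat{\O}^1$ is open in the restricted product topology and that the archimedean and finite components can be adjusted independently. Both are immediate from the definitions.
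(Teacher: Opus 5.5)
Your argument is correct. Since $\widehat{\O}^1$ is open in the restricted product, the density of $\widehat{\iota}(B^1)$ from Lemma \ref{lem:strongapproxB1general} gives $\widehat{B}^1 = \widehat{\iota}(B^1)\,\widehat{\O}^1$, and the archimedean component is then absorbed into $B_\infty^1$ exactly as you do. The paper just cites \cite[Corollary 28.5.12]{voight} here, and your proof is the standard deduction of that corollary from strong approximation, so it is essentially the same route.
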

\begin{proof}
See \cite[Corollary 28.5.12]{voight}.
\end{proof}

\begin{lem}\label{lem:surjectivitynorm}
Let $B$ be an indefinite quaternion algebra over $\Q$. Then, we have $\nrd(B^\times) = \Q^\times$.    
\end{lem}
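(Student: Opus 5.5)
The plan is to prove the two inclusions separately. The inclusion $\nrd(B^\times) \subset \Q^\times$ is immediate, since $\nrd$ is multiplicative and $B^\times = \nrd^{-1}(\Q^\times)$.

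For the reverse inclusion we use the Hasse--Schilling norm theorem. For a quaternion algebra over $\Q$ it says
\begin{equation*}
    \nrd(B^\times) = \{a \in \Q^\times : a > 0 \text{ at every real place } v \in \Ram(B)\}.
\end{equation*}
Since $B$ is indefinite, $\infty \notin \Ram(B)$, so the sign condition is empty and $\nrd(B^\times) = \Q^\times$. A suitable reference is \cite[Theorem 33.15.1]{voight} (Hasse--Schilling), or equivalently \cite[Corollary 28.5.10]{voight}-type statements; this citation is the whole argument if we are allowed to use it.

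If we prefer to stay closer to the results already set up in the paper, we can argue as follows.
\begin{enumerate}
    \item Fix $a \in \Q^\times$. It suffices to find a quadratic field $E \subset B$ with $a \in \nrd_E(E^\times)$, because $\nrd_B$ restricts to $\nrd_E$ on any subfield.
    \item Consider the pure quaternions $x$ with $\trd(x) = 0$, which have $\nrd(x) = -x^2$. More generally, consider elements $x$ with characteristic polynomial $X^2 - tX + a$.
    \item Such an $x$ exists in $B$ if and only if $E_t := \Q[X]/(X^2 - tX + a)$ is a field that embeds in $B$. This holds exactly when $E_t \otimes \Q_p$ is a field for each $p \in \Ram(B)$; at $\infty$ there is no condition because $B_\infty \simeq M_2(\R)$.
    \item By weak approximation, choose $t \in \Q$ so that at each $p \in \Ram(B)$ the polynomial $X^2 - tX + a$ defines the unramified quadratic extension of $\Q_p$. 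This is possible because the set of $t \in \Q_p$ for which the local algebra is a given field is open and nonempty.
    \item With this $t$, $E_t$ is a field (it is already a field locally at such a $p$), and it embeds in $B$ by the local--global embedding criterion. Its image contains an element of norm $a$.
\end{enumerate}

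The main obstacle in this second route is step 4: checking that for every $a \in \Q_p^\times$ some $t$ makes $X^2 - tX + a$ locally non-split. Norms from the unramified extension have even valuation, so when $v_p(a)$ is odd we should instead aim for a ramified extension. We will choose a local extension in which $a$ is a norm (one always exists, since the norm groups of the quadratic extensions of $\Q_p$ together cover $\Q_p^\times$) and pick $t$ accordingly. In practice we will simply cite Hasse--Schilling.
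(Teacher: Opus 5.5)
Your proposal is correct and takes the same approach as the paper: the paper also proves the lemma in one line, citing a norm theorem from Voight (Theorem 28.6.1 there) of which the lemma is a special case, just as your appeal to Hasse--Schilling with $\infty \notin \Ram(B)$ does. Fix your references, since Voight's Corollary 28.5.10 is the conjugacy of maximal orders (used elsewhere in the paper), not a norm statement; your optional local--global route is also sound once step 4 uses, as you indicate, a local quadratic extension in which $a$ is a norm, with a preimage chosen outside $\Q_p$ (multiply by a norm-one element other than $\pm 1$ if needed).
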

\begin{proof}
This is a particular case of \cite[Theorem 28.6.1]{voight}.
\end{proof}

Let $B_\infty^+ := \{x \in B_\infty^\times : \nrd(x) > 0\} \simeq \GL_2^+(\R)$.

\begin{lem}\label{lem:strongapproxBtimes}
Let $B$ be an indefinite quaternion algebra over $\Q$. Then, we have an equality
\begin{equation*}
    B_\A^\times = B^\times B_\infty^+\widehat{\O}^\times. 
\end{equation*}
\end{lem}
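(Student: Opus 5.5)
The plan is to reduce to the two previous results, Corollary \ref{cor:strongapproxB1} and Lemma \ref{lem:surjectivitynorm}, by using the reduced norm. The containment $B^\times B_\infty^+\widehat{\O}^\times \subset B_\A^\times$ is clear, so only the reverse inclusion needs proof.

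Fix $x = (x_v)_v \in B_\A^\times$. The first step adjusts the norm. Consider the idele $\nrd(x) \in \A^\times$. For $\Q$ we have the decomposition
\begin{equation*}
    \A^\times = \Q^\times \R_{>0} \widehat{\Z}^\times,
\end{equation*}
because $\Q$ has class number one and $\Z^\times = \{\pm 1\}$ absorbs the sign at infinity. So we can write $\nrd(x) = a \cdot r \cdot u$ with $a \in \Q^\times$, $r \in \R_{>0}$ and $u \in \widehat{\Z}^\times$. By Lemma \ref{lem:surjectivitynorm} there is $b \in B^\times$ with $\nrd(b) = a$. Locally, for each prime $p$ the map $\nrd: \O_p^\times \to \Z_p^\times$ is surjective. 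This holds both at split primes, via $\GL_2(\Z_p)$ and the determinant, and at ramified primes, via the unit group of the valuation ring of $\D_p$, which contains $\o_{E}^\times$ for $E$ unramified, and norms from unramified units are surjective. Hence there is $k \in \widehat{\O}^\times$ with $\nrd(k) = u$. At infinity we choose $g_\infty \in B_\infty^+$ with $\nrd(g_\infty) = r > 0$, for example $\sqrt{r}\, I$.

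The second step handles norm one. Set $y := b^{-1} x (g_\infty k)^{-1}$, where $g_\infty$ and $k$ sit in their respective components. Then $\nrd(y) = 1$, so $y \in B_\A^1$. By Corollary \ref{cor:strongapproxB1} we can write $y = \gamma\, h_\infty\, k'$ with $\gamma \in B^1$, $h_\infty \in B_\infty^1$ and $k' \in \widehat{\O}^1$. Therefore
\begin{equation*}
    x = (b\gamma)\,(h_\infty g_\infty)\,(k' k),
\end{equation*}
where we use that $B_\infty$ and $\widehat{B}$ components commute in the restricted product. Here $b\gamma \in B^\times$, $h_\infty g_\infty \in B_\infty^+$ because its norm is $r > 0$, and $k'k \in \widehat{\O}^\times$. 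This gives the claim.

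The only step that requires any care is the local surjectivity of $\nrd$ on $\O_p^\times$ at the ramified primes. This is standard: $\nrd(\o_{E}^\times) = \Z_p^\times$ for the unramified quadratic extension $E = E_{\D}$, as in Lemma \ref{lem:finerstructureramifiedorders}. Everything else is bookkeeping with the product decomposition.
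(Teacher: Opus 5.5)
Your proof is correct and follows the same route as the paper. You write $\nrd(x)$ using $\A^\times = \Q^\times\R_{>0}\widehat{\Z}^\times$, use surjectivity of $\nrd$ on $B^\times$, $B_\infty^+$ and $\widehat{\O}^\times$ to reduce to an element of $B_\A^1$, and apply Corollary \ref{cor:strongapproxB1}; you simply make explicit the bookkeeping and the local surjectivity $\nrd(\O_p^\times)=\Z_p^\times$, which the paper cites from Voight.
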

\begin{proof}
Clearly $\nrd(B_\infty^+) = \R_{> 0}$. We also know that $\nrd(B^\times) = \Q^\times$ (by Lemma \ref{lem:surjectivitynorm}) and that $\nrd(\widehat{\O}^\times) = \widehat{\Z}^\times$ (see \cite[Example 28.5.16]{voight}). By Corollary \ref{cor:strongapproxB1} we know that $B_\A^1 \subset B^\times B_\infty^+ \widehat{\O}^\times$. The proof of the lemma is completed by taking reduced norms and using that $\A^\times = \Q^\times\R_{> 0}\widehat{\Z}^\times$.
\end{proof}

\subsubsection{Quadratic separable algebras over $\Q$.}
Let $E$ a separable $\Q$-algebra of degree two. This means that either $E = \Q \times \Q$ (the split case), or else $E/\Q$ is a field extension of degree two. Just as in the local case, $E$ comes equipped with the reduced trace $\trd: B \rightarrow \Q$ and reduced norm $\nrd: E\rightarrow \Q$ maps, as well as with a standard involution $x \mapsto \overline{x}:= \trd(x) - x$ with the usual properties. In particular, any $x \in E$ satisfies its characteristic polynomial, so that $x^2 - \trd(x)x + \nrd(x) = 0$. When $E$ is a field, then $x \mapsto \overline{x}$ is just the nontrivial Galois automorphism of the extension $E/F$. 

We let $\Emb(E; B)$ denote the set of $\Q$-algebra embeddings of $E$ in $B$. Any embedding $\iota \in \Emb(E; B)$ is compatible with $\trd$ and $\nrd$, so that $\trd(\iota(x)) = \trd(x)$ and $\nrd(\iota(x)) = \nrd(x)$. We let $E^1 := \{x \in E: \nrd(x) = 1\}$. Clearly, for any $\iota \in \Emb(E; B)$ we have $E^1 = \iota^{-1}(B^1)$. 

As a particular case of the Skolem-Noether theorem, we know that $B$ acts transitively on $\Emb(E; B)$ by conjugation \cite[Main Theorem 7.7.1]{voight}. We also know that
\begin{equation}
    \Emb(E; B) \neq \varnothing \iff \Emb(E_v; B_v) \neq \varnothing \text{ for all } v \in \Pl \iff E_v \text{ is a field for all } v \in \Ram(B),
\end{equation}
see \cite[Proposition 14.6.7]{voight}. There is a unique maximal order in $E$, which we denote by $\o_E$. If $p$ is a finite prime, then we have $\o_{E_p} = (\o_E)_p$. That is, the completion of the maximal order of $E$ with respect to $p$ is the maximal order of $E_p$. 

We let $\o_E^1 = \o_E \cap E^1$. We use the notations $E_\A^\times$, $E_\A^1$, $\widehat{E}^\times$, $\widehat{E}^1$, $\widehat{\o_E}^\times$ and $\widehat{\o_E}^1$ similarly as above. In particular, we have
\begin{equation*}
    E_\A^\times := \sideset{}{'}\prod_{v \in \Pl}(E_v^\times, \o_{E_v}^\times), \quad E_\A^1 := \sideset{}{'}\prod_{v \in \Pl}(E_v^1, \o_{E_v}^1).
\end{equation*}

\subsubsection{Evaluation of some integrals.}\label{subsubsec:haarmeasureB1}
The Haar measure on $B_\A^1$ is a product of Haar measures on $B_\infty^1$ and $\widehat{B}^1$. The Haar measure on $\widehat{B}^1$ is chosen so that $\vol(\widehat{\O}^1) = 1$. Note that $\iota_{\diag}(B^1)\cap (B^1_\infty \widehat{\O}^1) = \iota_{\diag}(\O^1)$. By Corollary \ref{cor:strongapproxB1} the inclusion $B_\infty^1 \widehat{\O}^1 \hookrightarrow B_\A^1$ induces a bijection
\begin{equation}\label{eq:identificationofhomogeneouspaces}
\begin{tikzcd}
\iota_{\diag}(\O^1)\backslash (B^1_\infty \cdot \widehat{\O}^1) \arrow[rr, "\simeq"] &  & \iota_{\diag}(B^1)\backslash B_\A^1
\end{tikzcd},
\end{equation}
which can be seen to be a homeomorphism. 
\begin{lem}\label{lem:computationadelicintegralB1}
Let $f$ be a measurable function on $B_\A^1$ which satisfies
\begin{enumerate}
    \item [i)] $f(\iota_{\diag}(x)g) = f(g)$ for $x \in B^1$, 
    \item [ii)] $f(gu)$ for $u \in \widehat{\O}^1$. 
\end{enumerate}
Then, we have $f \in L^1(\iota_{\diag}(B^1)\backslash B_\A^1)$ if and only if $f \circ \iota_\infty \in L^1(\iota_\infty(\O^1) \backslash \SL_2(\R))$. Furthermore, in case $f \in L^1(\iota_{\diag}(B^1)\backslash B_\A^1)$ we have
\begin{equation}\label{eq:integralshomogeneousspacesB1}
    \int_{\iota_{\diag}(B^1)\backslash B_\A^1} f(g) \, dg = \int_{\iota_\infty(\O^1) \backslash \SL_2(\R)} f(\iota_\infty(g)) \, dg.
\end{equation}
\end{lem}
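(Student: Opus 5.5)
The plan is to transfer the integral through the bijection \eqref{eq:identificationofhomogeneouspaces}, using that Haar measures on the two sides are compatible, and then use right $\widehat{\O}^1$-invariance together with $\vol(\widehat{\O}^1)=1$ to integrate out the finite-adelic variable.

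First, I would verify that the homeomorphism \eqref{eq:identificationofhomogeneouspaces} is measure preserving. The quotient measure on $\iota_{\diag}(B^1)\backslash B_\A^1$ is defined from the Haar measure on $B_\A^1 = B_\infty^1\times\widehat{B}^1$ (product measure) and counting measure on the discrete subgroup $\iota_{\diag}(B^1)$. The open subgroup $H:=B^1_\infty\widehat{\O}^1$ carries the restricted Haar measure, which is again a product measure. The key point is that, for an open subgroup $H$ of a unimodular group $G$ and a discrete subgroup $\Gamma$ with $G=\Gamma H$, the natural map $(\Gamma\cap H)\backslash H\to \Gamma\backslash G$ is a measure-preserving bijection for the compatible quotient measures. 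To check this, take a measurable fundamental domain $\mathcal{F}\subset H$ for $\Gamma\cap H$ acting on $H$. I would show that $\mathcal{F}$ is also a fundamental domain for $\Gamma$ acting on $G$. The covering property follows from $G=\Gamma H$. For disjointness, suppose $\gamma h=h'$ with $h,h'\in\mathcal{F}$. Then $\gamma\in H\cap\Gamma$, so $h=h'$ and $\gamma=1$. Since both quotient measures are computed by integrating over $\mathcal{F}$ with the same Haar measure, the claim follows. Here $\Gamma\cap H=\iota_{\diag}(\O^1)$, as noted in the text.

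Next, I would take a fundamental domain for $\iota_\infty(\O^1)$ in $\SL_2(\R)$, say $\mathcal{F}_\infty$. Since $\O^1$ embeds diagonally into $\widehat{\O}^1$, I cannot simply use $\mathcal{F}_\infty\times\widehat{\O}^1$ as a fundamental domain without justification. So I will argue via unfolding instead. Define $F(g_\infty):=\int_{\widehat{\O}^1} f(g_\infty u)\,du$ for $g_\infty\in B_\infty^1$. By hypothesis ii), $f(g_\infty u)=f(g_\infty)$, so $F=f\circ\iota_\infty$ because $\vol(\widehat{\O}^1)=1$.

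To integrate over $(\Gamma\cap H)\backslash H$, I would use the Fubini-type formula for the quotient measure. For any nonnegative measurable $\phi$ on $H$,
\[
\int_H \phi = \int_{(\Gamma\cap H)\backslash H}\sum_{\gamma\in\Gamma\cap H}\phi(\gamma h)\,dh .
\]
Apply this with $\phi(g_\infty,u)=\psi(g_\infty)f(g_\infty,u)$, where $\psi$ is a cutoff satisfying $\sum_{\gamma\in\O^1}\psi(\gamma g_\infty)=1$ (for instance the indicator of $\mathcal{F}_\infty$). Using left invariance i) and Fubini, the left-hand side becomes $\int_{\mathcal{F}_\infty} F = \int_{\O^1\backslash\SL_2(\R)} f\circ\iota_\infty$. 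The right-hand side is $\int f$ over $\iota_{\diag}(\O^1)\backslash H$.

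Applying this identity to $|f|$ gives the $L^1$ equivalence, and applying it to $f$ gives \eqref{eq:integralshomogeneousspacesB1}. The main obstacle is the measure-theoretic bookkeeping, namely the compatibility of quotient measures under the open-subgroup identification. I expect no genuine difficulty there beyond the fundamental-domain argument above.
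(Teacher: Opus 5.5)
Your proposal is correct and takes the paper's route. The paper's proof just cites the homeomorphism \eqref{eq:identificationofhomogeneouspaces} together with $\vol(\widehat{\O}^1)=1$; you supply the missing measure-theoretic details, namely the fundamental-domain check that the identification preserves quotient measures, and the unfolding that integrates out the $\widehat{\O}^1$-variable. (Your caution is unnecessary: $\mathcal{F}_\infty\times\widehat{\O}^1$ is already a fundamental domain for $\iota_{\diag}(\O^1)$ acting on $B^1_\infty\widehat{\O}^1$, since $\widehat{\O}^1$ is stable under left multiplication by the diagonally embedded $\O^1$.)
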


\begin{proof}
This follows from the homeomorphism \eqref{eq:identificationofhomogeneouspaces} and the fact that $\vol(\widehat{\O}^1) = 1$. 
\end{proof}
We consider the measure $\frac{dy}{|y|}$ on $\R^\times$, and choose a measure on $B_\infty^\times$ compatible with the Haar measures already fixed on $B^1_\infty$ and $\R^\times$ and with the exact sequence
\begin{equation*}
\begin{tikzcd}
1 \arrow[r] & B^1_\infty \arrow[r] & B^\times_\infty \arrow[r, "\nrd"] & \R^\times \arrow[r] & 1.
\end{tikzcd}
\end{equation*}
Let $Z_\infty^+ = \{\begin{psmallmatrix}
    t^{1/2} & 0\\
    0 & t^{1/2}
\end{psmallmatrix}: t \in \R_{> 0}\} \simeq \R_{> 0}$, and equip this group with the Haar measure $dt/t$. If we identify $B_\infty^\times \simeq \GL_2(\R)$, then the choices of Haar measures are compatible with the isomorphism $B_\infty^+ \simeq Z_\infty^+ \times B_\infty^1$. This means that, for $f \in C_c^\infty(\GL_2^+(\R))$, we have 
\begin{equation}\label{eq:haarmeasuregl2sl2}
    \int_{g \in \GL_2^+(\R)} f(g) \, dg = \int_{t \in \R_{> 0}}\int_{h \in \SL_2(\R)} f\left(\begin{psmallmatrix}
    t^{1/2} & 0\\
    0 & t^{1/2}
\end{psmallmatrix}h\right)\, \frac{dt}{t}\, dh.
\end{equation}  
The Haar measure on $\widehat{B}^\times$ is chosen so that $\vol(\widehat{\O}^\times) = 1$, and the Haar measure on $B_\A^\times$ is the product of the Haar measures already chosen for $B_\infty^\times$ and $\widehat{B}^\times$. Note that 
\begin{equation*}
    Z_\infty^+ \iota_{\diag}(B^\times) \cap B_\infty^+ \widehat{\O}^\times = Z_\infty^+ \iota_{\diag}(\O^1).
\end{equation*}
It follows from Lemma \ref{lem:strongapproxBtimes} that the inclusion $B_\infty^+ \widehat{\O}^\times \hookrightarrow B_\A^\times$ induces a bijection
\begin{equation}\label{auxeq:homogeneousspaceBtimes}
\begin{tikzcd}
Z_\infty^+\iota_{\diag}(\O^1) \backslash (B_\infty^+ \widehat{\O}^\times) \arrow[rr, "\simeq"] &  & Z_\infty^+\iota_{\diag}(B^\times) \backslash B_\A^\times,
\end{tikzcd}
\end{equation}
which can be seen to be an homeomorphism.

\begin{lem}\label{lem:computationadelicintegralBtimes}
    Let $f$ be a measurable function on $B_\A^\times$ which satisfies
\begin{enumerate}
    \item [i)] $f(\iota_{\diag}(x)g) = f(g)$ for $x \in B^\times$.
    \item [ii)] $f(zg) = f(g)$ for $z \in Z_\infty^+$.
    \item [iii)] $f(gu)$ for $u \in \widehat{\O}^\times$. 
\end{enumerate}
Then, $f \in L^1(Z_\infty^+\iota_{\diag}(B^\times)\backslash B_\A^\times)$ if and only if $f \circ \iota_\infty \in L^1(\iota_\infty(\O^1) \backslash \SL_2(\R))$. Furthermore, in this case we have the equality
\begin{equation}\label{eq:integralshomogeneousspaces}
    \int_{Z_\infty^+ \iota_{\diag}(B^\times)\backslash B_\A^\times} f(g) \, dg = \int_{\iota_{\infty}(\O^1)\backslash \SL_2(\R)} f(\iota_\infty(g)) \, dg.
\end{equation}
\end{lem}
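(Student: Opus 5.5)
The natural route mirrors the proof of Lemma \ref{lem:computationadelicintegralB1}: transport the integral along the homeomorphism \eqref{auxeq:homogeneousspaceBtimes}, and then reduce the resulting integral over $B_\infty^+\widehat{\O}^\times$ to one over $\SL_2(\R)$ using the product decomposition of measures \eqref{eq:haarmeasuregl2sl2} together with $\vol(\widehat{\O}^\times)=1$.

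First I will check that \eqref{auxeq:homogeneousspaceBtimes} is measure-compatible. Lemma \ref{lem:strongapproxBtimes} gives surjectivity onto $Z_\infty^+\iota_{\diag}(B^\times)\backslash B_\A^\times$. The identity $Z_\infty^+\iota_{\diag}(B^\times)\cap B_\infty^+\widehat{\O}^\times = Z_\infty^+\iota_{\diag}(\O^1)$ gives injectivity. I will verify that identity quickly: an element $z\iota_{\diag}(x)$ lies in $B_\infty^+\widehat{\O}^\times$ only if $x\in \O_p^\times$ for all $p$, so $x\in\O^\times$; moreover $\nrd(x)\in\Z^\times$ must be positive, since $\nrd(zx)_\infty>0$. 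Hence $x\in\O^1$.

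Since $B_\infty^+\widehat{\O}^\times$ is an open subgroup of $B_\A^\times$, the quotient measure on $Z_\infty^+\iota_{\diag}(B^\times)\backslash B_\A^\times$ restricts to the quotient measure of $Z_\infty^+\iota_{\diag}(\O^1)\backslash(B_\infty^+\widehat{\O}^\times)$. This uses the uniqueness of invariant measures on homogeneous spaces, as in \cite{bourbakiintegrationii}. Consequently, for $f\geq 0$ satisfying i)--iii),
\begin{equation*}
\int_{Z_\infty^+\iota_{\diag}(B^\times)\backslash B_\A^\times} f\,dg=\int_{Z_\infty^+\iota_{\diag}(\O^1)\backslash (B_\infty^+\widehat{\O}^\times)} f\,dg .
\end{equation*}

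Next I will use iii) and $\vol(\widehat{\O}^\times)=1$ to integrate out the $\widehat{\O}^\times$ factor. The subtle point is that $\iota_{\diag}(\O^1)$ acts on both components. This is handled by writing $B_\infty^+\widehat{\O}^\times = B_\infty^+\times\widehat{\O}^\times$ and observing that $f(g_\infty u)=f(g_\infty)$ for $u\in\widehat{\O}^\times$. Thus $f$ restricted to this subgroup is a function of $g_\infty$ alone that is left $Z_\infty^+\iota_\infty(\O^1)$-invariant. Unfolding then gives
\begin{equation*}
\int_{Z_\infty^+\iota_\infty(\O^1)\backslash B_\infty^+} f\,dg_\infty .
\end{equation*}
To justify this step I can use a fundamental domain for $Z_\infty^+\iota_\infty(\O^1)$ in $B_\infty^+$ times $\widehat{\O}^\times$, which is a fundamental domain for the diagonal action because the map $\O^1\to\widehat{\O}^\times$ plays no role once the $\infty$-component is fixed.

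Finally I will apply \eqref{eq:haarmeasuregl2sl2}. Since $B_\infty^+\simeq Z_\infty^+\times B_\infty^1$ and $Z_\infty^+$ is central, the quotient $Z_\infty^+\iota_\infty(\O^1)\backslash B_\infty^+$ identifies with $\iota_\infty(\O^1)\backslash\SL_2(\R)$, and the measures match. This yields \eqref{eq:integralshomogeneousspaces} for nonnegative $f$.

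The $L^1$ equivalence follows by applying this identity to $|f|$, and the general equality then follows by linearity. The main obstacle is the careful bookkeeping of quotient-measure normalizations in the first and third steps. In particular, one must ensure that $Z_\infty^+$ with measure $dt/t$ enters consistently on both sides, so that no stray constant, such as a factor $2$ coming from $\pm$ or from $\R^\times$ versus $\R_{>0}$, appears. For this I will check carefully the normalization $B^\times_\infty\to\R^\times$ against the restriction to $B_\infty^+$.
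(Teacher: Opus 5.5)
Your proposal is correct and follows essentially the same route as the paper. The paper proves the lemma in one line by citing the homeomorphism \eqref{auxeq:homogeneousspaceBtimes}, $\vol(\widehat{\O}^\times)=1$, the fact that $\iota_\infty(\O^1)\cap Z_\infty^+=\{1\}$, and the compatibility of the chosen measures on $B_\infty^+$, $Z_\infty^+$ and $B_\infty^1$. That trivial intersection is the one ingredient you use only implicitly: it underlies your claim that the $\infty$-component determines the element of $Z_\infty^+\iota_{\diag}(\O^1)$, and your identification of $Z_\infty^+\iota_\infty(\O^1)\backslash B_\infty^+$ with $\iota_\infty(\O^1)\backslash\SL_2(\R)$. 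It holds because every element of $Z_\infty^+$ other than $1$ has reduced norm different from $1$.
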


\begin{proof}
This follows from the homeomorphism \eqref{auxeq:homogeneousspaceBtimes}, the fact that $\vol(\widehat{\O}^\times) = 1$, the fact that $\iota_\infty(\O^1)\cap Z_\infty^+ = \{1\}$, and from our choice of measures for $B_\infty^+$, $Z_\infty^+$ and $B_\infty^1$.
\end{proof}

\subsection{Classical-adelic comparison.}
\subsubsection{Classical geodesic sums.}
Recall that $B$ is an indefinite quaternion algebra over $\Q$, possibly split, and that $\O$ is a maximal order in $B$. Given $t > 2$ we define $N(t) > 1$ implicitly by the equation $N(t)^{1/2} + N(t)^{-1/2} = t$. Given $P \in \O^1$ with $\trd(P) > 2$ we let $C_{\O^1}(P) = \{\gamma \in \O^1 : \gamma^{-1}P\gamma = P\}$ be its centralizer in $\O^1$. We let $\nu(P) := [C_{\O^1}(P): \{\pm I\}\langle P\rangle]$. Let $\varphi$ be a class function on $(\O/q\O)^1$. Define the sums
\begin{equation}\label{eq:chebyshevfixedtrace}
    \psi_{\O^1}(t; \varphi) := \log N(t)\sum_{\substack{\{P\}_{\O^1}\\
    \trd(P) = t}} \frac{\varphi(P)}{\nu(P)},
\end{equation}
which are related to the sums \eqref{eq:chebyshevquaternion} considered in the introduction by the formula
\begin{equation}\label{eq:chebyshevsumoftraces}
    \Psi_{\O^1}(X; \varphi) = \sum_{2 < t \leq X^{1/2} + X^{-1/2}} \psi_{\O^1}(t; \varphi),
\end{equation}
where $X \geq 1$. We also write 
\begin{equation}\label{eq:untwistedchebyshevfixedtrace}
    \psi_{\O^1}(t) := \log N(t)\sum_{\substack{\{P\}_{\O^1}\\
    \trd(P) = t}} \frac{1}{\nu(P)},
\end{equation}
so that 
\begin{equation*}
    \Psi_{\O^1}(X) = \sum_{2 < t \leq X^{1/2} + X^{-1/2}} \psi_{\O^1}(t).
\end{equation*}
Note that $\psi_{\O^1}(t) = \psi_{\O^1}(t; \mathbbm{1}_{(\O/q\O)^1})$ for any $q \geq 1$.

\subsubsection{Adelic geodesic sums for $B^1$.}
Let $\varphi_\A \in C_c^\infty(B_\A^1)$. Given $t > 2$ we define the function $K_t$ on $B_\A^\times$ by
\begin{equation}\label{eq:defitracekernel}
    K_t(g) := \sum_{\substack{\gamma \in B^1\\\trd(\gamma) = t}} \varphi_\A(g^{-1}\gamma g).
\end{equation}
Note that $K_t(\delta g) = K_t(g)$ for $\delta \in B^\times$. Thus, it makes sense to integrate this function over $B^1\backslash B_\A^1$, and we denote the resulting integral by  
\begin{equation}\label{eq:adelicdefinitionhyperbolicterm}
    \Lambda_{B^1}(t; \varphi_\A) := \int_{B^1\backslash B^1_\A} K_t(g) \, dg. 
\end{equation}
Suppose we are given a class function $\varphi$ on $(\O/q\O)^1$. It corresponds, via the isomorphism \eqref{eq:chineseremainderidentification}, to a class function on $\widehat{\O}^1$, denoted by $\widehat{\varphi}$, which is invariant under translations by $\widehat{\O}^1(q)$. We can extend $\widehat{\varphi}$ to a function on $\widehat{B}^1$ by letting it vanish outside of $\widehat{\O}^1$. We let $\varphi_\infty \in C_c^\infty(B_\infty^1)$ be arbitrary and define $\varphi_\A \in C_c^\infty(B_\A^1)$ as the product of $\varphi_\infty$ and $\widehat{\varphi}$, so that 
\begin{equation}\label{eq:defiadelicfunction}
    \varphi_\A(\iota_\infty(\gamma_\infty)\widehat{\gamma}) := \varphi_\infty(\gamma_\infty)\widehat{\varphi}(\widehat{\gamma}), \quad \text{where }\gamma_\infty \in B^1_\infty, \widehat{\gamma} \in \widehat{B}^1.
\end{equation}
If we define $K_t$ as in \eqref{eq:defitracekernel} using the function $\varphi_\A$ from \eqref{eq:defiadelicfunction}, then we have $K_t(gu) = K_t(g)$ for $u \in \widehat{\O}^1$. Thus, if we apply Lemma \ref{lem:computationadelicintegralB1} to \eqref{eq:adelicdefinitionhyperbolicterm} we deduce that 
\begin{equation*}
    \Lambda_{B^1}(t; \varphi_\A) = \int_{\O^1 \backslash \SL_2(\R)} K_t(\iota_\infty(g)) \, dg. 
\end{equation*}
Since $B_\infty^1$ commutes with $\widehat{B}^1$, we get  
\begin{equation*}
    \varphi_\A(\iota_\infty(g)^{-1}\gamma \iota_\infty(g)) = \begin{dcases}
        \varphi_\infty(g^{-1}\gamma g) \varphi(\gamma \modulo q), & \text{ if }\gamma \in \O^1,\\
        0, & \text{ otherwise}.
    \end{dcases}
\end{equation*}
Therefore, we deduce that 
\begin{equation*}
\Lambda_{B^1}(t; \varphi_\A) = \int_{\O^1 \backslash \SL_2(\R)} \sum_{\substack{P \in \O^1\\
\trd(P) = t}} \varphi_\infty(g^{-1}P g) \varphi(P \modulo q)\, dg.
\end{equation*}
We can break up the inner sum into $\O^1$-conjugacy classes 
\begin{equation*}
\sum_{\substack{P \in \O^1\\
\trd(P) = t}} \varphi_\infty(g^{-1}P g) \varphi(P \modulo q) = \sum_{\substack{\{P\}_{\O^1}\\
    \trd(P) = t}} \varphi(P \modulo q) \sum_{\delta \in C_{\O^1}(P)\backslash \O^1} \varphi_\infty(g^{-1}\delta^{-1}P \delta g),
\end{equation*}
and after exchanging the outer sum with the integral and applying Fubini's theorem we obtain
\begin{equation*}
    \Lambda_{B^1}(t; \varphi_\A) = \sum_{\substack{\{P\}_{\O^1}\\
    \trd(P) = t}} \varphi(P \modulo q) \int_{C_{\O^1}(P)\backslash \SL_2(\R)} \varphi_\infty(g^{-1}P g)\, dg.
\end{equation*}
For each $P\in \SL_2(\R)$ with $\trd(P) = t > 2$,  there is $g \in \SL_2(\R)$ such that $g^{-1}Pg = \begin{psmallmatrix}
    N(t)^{\frac{1}{2}} & 0\\
    0 & N(t)^{-\frac{1}{2}}
\end{psmallmatrix}$ and $g^{-1}C_{\O^1}(P)g = \{\pm \begin{psmallmatrix}
    N(t)^{\frac{k}{2\nu(P)}} & 0\\
    0 & N(t)^{\frac{k}{2\nu(P)}}\end{psmallmatrix}: k \in \Z\}$. Let $A = \R \times \R$, which we embed in $M_2(\R)$ via $(y_1, y_2)\mapsto \begin{psmallmatrix}
    y_1 & 0\\
    0 & y_2
    \end{psmallmatrix}$. We choose the Haar measure on $A^1$ so that $\vol_{A^1}(\{(y^{1/2}, y^{-1/2}): y \in [1, T]\} = \log T$. In particular,  we have $\vol(C_{\O^1}(P) \backslash \iota(E^1)) = (\log N(t))/\nu(P)$. There is a unique Haar measure on $A^\times$ which is compatible with the exact sequence 
    \begin{equation*}
\begin{tikzcd}
1 \arrow[r] & A^1 \arrow[r] & A^\times \arrow[r, "\nrd"] & \R^\times \arrow[r] & 1,
\end{tikzcd}
    \end{equation*}
where $\R^\times$ has the standard Haar measure $dy/|y|$. 
Note that the inclusion $\SL_2(\R) \subset \GL_2(\R)$ induces a measure-preserving homeomorphism $\iota(E^1)\backslash \SL_2(\R) \simeq \iota(E^\times)\backslash \GL_2(\R)$, recall Section \ref{subsubsec:haarmeasureB1} for the choice of measures. We deduce that 
\begin{equation*}
\begin{aligned}
    \int_{C_{\O^1}(P)\backslash \SL_2(\R)} \varphi_\infty(g^{-1}P g)\, dg  = \frac{\log N(t)}{\nu(P)} \O_{\GL_2(\R)}(\varphi_\infty; \R \times \R, (N(t)^{1/2}, N(t)^{-1/2})),
\end{aligned}
\end{equation*}
where the definition of the local orbital integral in the archimedean case is the same as in the nonarchimedean case, see \eqref{eq:defioforbitalintegral}.
For brevity, we write $Q_{\varphi_\infty} \in C_c(\R_{> 0})$ for the function 
\begin{equation}\label{eq:defiabeltransform}
    Q_{\varphi_\infty}(y) :=  \O_{\GL_2(\R)}(\varphi_\infty; \R \times \R, (y^{1/2}, y^{-1/2})),
\end{equation}
We record what we have shown so far. 
\begin{lem}\label{lem:adelicclassicalcomparisonB1fixedt}
Let $\varphi$ be a class function on $(\O/q\O)^1$ and $\varphi_\infty \in C_c^\infty(B_\infty^1)$. Let $\varphi_\A \in C_c^\infty(B_\A^1)$ be defined by \eqref{eq:defiadelicfunction}. Then, for any positive integer $t > 2$ we have 
\begin{equation*}
    \Lambda_{B^1}(t; \varphi_\A) = Q_{\varphi_\infty}(N(t))\psi_{\O^1}(t; \varphi),
\end{equation*}
where $Q_{\varphi_\infty}$ is defined in \eqref{eq:defiabeltransform}.
\end{lem}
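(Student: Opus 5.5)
The argument will assemble the computation carried out just before the statement into a chain of equalities, with one normalization check at each step. Fix $t>2$ and a class function $\varphi$ on $(\O/q\O)^1$. Let $\widehat{\varphi}$ be the corresponding function on $\widehat{\O}^1$, extended by zero to $\widehat{B}^1$, and let $\varphi_\A$ be defined by \eqref{eq:defiadelicfunction}.

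The first step is to check that $K_t$ is left $B^1$-invariant and right $\widehat{\O}^1$-invariant. Left invariance holds because $\gamma\mapsto\delta^{-1}\gamma\delta$ permutes the trace-$t$ elements of $B^1$. Right invariance holds because $\widehat{\varphi}$ is a class function on $\widehat{\O}^1$ and $\widehat{B}^1\setminus\widehat{\O}^1$ is stable under $\widehat{\O}^1$-conjugation. Lemma \ref{lem:computationadelicintegralB1} then applies and reduces $\Lambda_{B^1}(t;\varphi_\A)$ to an integral over $\O^1\backslash\SL_2(\R)$. Integrability is automatic: $\O^1\backslash\SL_2(\R)$ has finite volume, and for $g$ in a compact fundamental domain (or a Siegel domain in the split case) only finitely many $\gamma$ contribute, since $\gamma$ lies in the discrete set $\O^1$ and $g^{-1}\gamma g$ lies in the compact set $\operatorname{supp}\varphi_\infty$. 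In the split case the hyperbolic orbital integral is still absolutely convergent, and I would note this rather than belabor it.

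Next, I would use the fact that $\widehat{\iota}(\gamma)\in\widehat{\O}^1$ if and only if $\gamma\in\O^1$. This restricts the sum to $P\in\O^1$ with $\trd P=t$, weighted by $\varphi(P\bmod q)$. I would then unfold over $\O^1$-conjugacy classes, using $\varphi(\delta^{-1}P\delta)=\varphi(P)$. Tonelli, applied to $|\varphi_\infty|$ first, justifies the interchange that produces $\int_{C_{\O^1}(P)\backslash\SL_2(\R)}\varphi_\infty(g^{-1}Pg)\,dg$. There are only finitely many such classes for fixed $t$, by the finiteness of class numbers, so the outer sum causes no issue.

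The step requiring the most care is factoring this integral through the full centralizer $A^1$ of the diagonalized $P$ in $\SL_2(\R)$. After conjugating $P$ to $\diag(N(t)^{1/2},N(t)^{-1/2})$, we write the integral as $\vol(C_{\O^1}(P)\backslash A^1)$ times an integral over $A^1\backslash\SL_2(\R)$. With the chosen normalization, $\{\pm I\}\langle P\rangle\backslash A^1$ has volume $\log N(t)$, because $\pm$ is absorbed into $A^1=\{(y,y^{-1})\}$ with $y\in\R^\times$. Taking the extra index $\nu(P)$ into account gives volume $\log N(t)/\nu(P)$. I would double-check the sign component: $A^1$ has two components and $-I\in C_{\O^1}(P)$. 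Finally, since $\SL_2(\R)A^\times=\GL_2(\R)$ and the measures are compatible with $\nrd$, the identification $A^1\backslash\SL_2(\R)\simeq A^\times\backslash\GL_2(\R)$ is measure preserving, so the remaining integral is $Q_{\varphi_\infty}(N(t))$ as in \eqref{eq:defiabeltransform}. This quantity is independent of $P$. Summing over classes and comparing with \eqref{eq:chebyshevfixedtrace} yields $\Lambda_{B^1}(t;\varphi_\A)=Q_{\varphi_\infty}(N(t))\psi_{\O^1}(t;\varphi)$.
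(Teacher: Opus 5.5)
Your proposal is correct and follows essentially the same route as the paper. Both arguments reduce via Lemma \ref{lem:computationadelicintegralB1} to an integral over $\O^1\backslash\SL_2(\R)$, restrict the sum to $P\in\O^1$, unfold over $\O^1$-conjugacy classes, and factor through the centralizer using $\vol(C_{\O^1}(P)\backslash A^1)=\log N(t)/\nu(P)$ together with the measure-preserving identification $A^1\backslash\SL_2(\R)\simeq A^\times\backslash\GL_2(\R)$. Your extra remarks on integrability (Tonelli applied to $|\varphi_\infty|$), on the finiteness of the set of classes, and on the sign component fill in details the paper leaves implicit.
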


\subsubsection{Adelic geodesic sums for $B^\times$.}
Given any $\varphi_\A \in C_c^\infty(B_\A^1)$, the function $K_t$ defined on $B_\A^\times$ by equation \eqref{eq:defitracekernel} satisfies $K_t(zg) = K_t(g)$ for any $z$ in the center of $B_\A^\times$, in particular for $z \in Z_\infty^+$. Thus, the following integral is well-defined
\begin{equation}\label{eq:adelicdefinitionhyperbolictermBtimes}
    \Lambda_{B^\times}(t; \varphi_\A) := \int_{Z_\infty^+ B^\times\backslash B_\A^\times} K_t(g) \, dg = \int_{Z_\infty^+ B^\times \backslash B_\A^\times}  \sum_{\substack{\gamma \in B^1\\\trd(\gamma) = t}} \varphi_\A(g^{-1}\gamma g) \, dg. 
\end{equation}
Recall Definition \ref{defi:stablefunction}.
If we define $\varphi_\A$ as in equation \eqref{eq:defiadelicfunction}, starting with a stable function $\varphi$, then $\varphi_\A$ is invariant under conjugation by $\widehat{\O}^\times$. Thus, we have $K_t(gu)  = K_t(g)$ for $u \in \widehat{\O}^\times$. If we apply Lemma \ref{lem:computationadelicintegralBtimes} and proceed as above we obtain the following variant of Lemma \ref{lem:adelicclassicalcomparisonB1fixedt}. 

\begin{lem}\label{lem:adelicclassicalcomparisonBtimesfixedt}
Let the notation be as in Lemma \ref{lem:adelicclassicalcomparisonB1fixedt}, and assume that $\varphi$ is stable. Then, we have
\begin{equation*}
    \Lambda_{B^\times}(t; \varphi_\A) = \Lambda_{B^1}(t; \varphi_\A) = Q_{\varphi_\infty}(N(t))\psi_{\O^1}(t; \varphi).
\end{equation*}
\end{lem}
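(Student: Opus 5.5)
We need to prove $\Lambda_{B^\times}(t;\varphi_\A)=\Lambda_{B^1}(t;\varphi_\A)$ when $\varphi$ is stable; the second equality is then Lemma \ref{lem:adelicclassicalcomparisonB1fixedt}. The plan is to reduce both adelic integrals to the same classical integral over $\O^1\backslash \SL_2(\R)$ by checking the hypotheses of Lemma \ref{lem:computationadelicintegralBtimes}.

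\textbf{Invariances of $K_t$ on $B_\A^\times$.} Consider $K_t$ from \eqref{eq:defitracekernel} as a function on $B_\A^\times$.
\begin{enumerate}
\item[i)] Left invariance under $\iota_{\diag}(B^\times)$ holds because $\gamma\mapsto\delta^{-1}\gamma\delta$ permutes $\{\gamma\in B^1:\trd(\gamma)=t\}$.
\item[ii)] Invariance under $Z_\infty^+$ holds because $Z_\infty^+$ is central.
\item[iii)] Right invariance under $\widehat{\O}^\times$ follows from stability. Since $\varphi$ is stable, $\widehat{\varphi}$ is invariant under conjugation by $\widehat{\O}^\times$ on $\widehat{\O}^1$. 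Extended by zero, it is invariant on all of $\widehat{B}^1$, because $\widehat{\O}^\times$ normalizes $\widehat{\O}^1$. The factor $\varphi_\infty$ is unaffected, since $\widehat{\O}^\times$ commutes with $B_\infty^\times$.
\end{enumerate}
Hence $\varphi_\A(u^{-1}g^{-1}\gamma g u)=\varphi_\A(g^{-1}\gamma g)$ for $u\in\widehat{\O}^\times$, which gives $K_t(gu)=K_t(g)$.

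\textbf{Integrability and equality.} Next we verify integrability. The function $K_t\circ\iota_\infty$ on $\O^1\backslash\SL_2(\R)$ is a finite sum over $\O^1$-conjugacy classes of hyperbolic elements of trace $t$. There are finitely many such classes, since $B^1\cap\O$ is discrete and the set of $\gamma$ with given trace meeting a compact set is finite. The computation preceding Lemma \ref{lem:adelicclassicalcomparisonB1fixedt} then shows that $|K_t|\circ\iota_\infty$ has finite integral, given by finitely many orbital integrals of the compactly supported $|\varphi_\infty|$. Lemma \ref{lem:computationadelicintegralBtimes} now gives
\[
\Lambda_{B^\times}(t;\varphi_\A)=\int_{\iota_\infty(\O^1)\backslash\SL_2(\R)}K_t(\iota_\infty(g))\,dg .
\]

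\textbf{Conclusion and main subtlety.} Lemma \ref{lem:computationadelicintegralB1}, applied to the restriction of $K_t$ to $B_\A^1$, gives the same right-hand side for $\Lambda_{B^1}(t;\varphi_\A)$. This restriction is left $B^1$-invariant and right $\widehat{\O}^1$-invariant even without stability. The two quantities therefore agree.

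The only real point is the bookkeeping behind this identification. The homeomorphism \eqref{auxeq:homogeneousspaceBtimes} identifies $Z_\infty^+\O^1\backslash B_\infty^+\widehat{\O}^\times$ with $\O^1\backslash\SL_2(\R)\times\widehat{\O}^\times$. The measure normalizations, namely \eqref{eq:haarmeasuregl2sl2} and $\vol(\widehat{\O}^\times)=1$, make the fibre over each point of $\O^1\backslash\SL_2(\R)$ have mass one. Lemma \ref{lem:computationadelicintegralBtimes} already packages this, so it is the step to invoke rather than redo.
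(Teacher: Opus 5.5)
Your proposal is correct and is the paper's argument. Stability makes $\varphi_\A$ invariant under conjugation by $\widehat{\O}^\times$, so $K_t(gu)=K_t(g)$; then Lemma~\ref{lem:computationadelicintegralBtimes} and Lemma~\ref{lem:computationadelicintegralB1} reduce $\Lambda_{B^\times}(t;\varphi_\A)$ and $\Lambda_{B^1}(t;\varphi_\A)$ to the same integral over $\O^1\backslash\SL_2(\R)$, after which Lemma~\ref{lem:adelicclassicalcomparisonB1fixedt} applies. Your explicit checks of the remaining hypotheses and of integrability simply spell out what the paper leaves to ``proceed as above''. One small imprecision: the left side of \eqref{auxeq:homogeneousspaceBtimes} is $\O^1\backslash(\SL_2(\R)\times\widehat{\O}^\times)$ with $\O^1$ acting diagonally, not a product, though your fibre-mass-one conclusion is unaffected.
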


Let us look more closely at the definition of $\Lambda_{B^\times}(t; \varphi_\A)$ in \eqref{eq:adelicdefinitionhyperbolictermBtimes}. By the Noether--Skolem theorem we know that all the elements $\gamma \in B^1$ such that $\trd(\gamma) = t$ are conjugate under $B^\times$. Consider the field $E := F[X]/(X^2 - tX + 1)$ and the element $x \in E$ such that $x = X \modulo (X^2 - tX + 1)$. If $\Emb(E; B) = \varnothing$, then $\Lambda_{B^\times}(t; \varphi) = 0$, since the inner sum is empty. Suppose that $\Emb(E; B)$ is nonempty, and let $\iota \in \Emb(E; B)$. Then the map $g \mapsto g^{-1}\iota g$ induces a bijection
\begin{equation*}
    \iota(E^\times)\backslash B^\times \longleftrightarrow \{\gamma \in B^1: \trd(\gamma) = t\}.
\end{equation*} 
Also, note that the centralizer of $\iota(x)$ in $B_\A^\times$ is $\iota(E_\A^\times)$. We deduce that 
\begin{equation*}
    \Lambda_{B^\times}(t; \varphi_\A) = \begin{dcases}
    0, & \text{ if }\Emb(E; B) = \varnothing,\\
    \vol(Z_\infty^+ E^\times \backslash E_\A^\times) \cdot \int_{\iota(E_\A^\times)\backslash B_\A^\times} \varphi_\A(g^{-1}\iota(x)g) \, dg, & \text{ otherwise}.\end{dcases} 
\end{equation*}
If $\varphi_\A$ is constructed as in \eqref{eq:defiadelicfunction} then we have
\begin{equation*}
    \int_{\iota(E_\A^\times)\backslash B_\A^\times} \varphi(g^{-1}\iota(x)g) \, dg = \O_{B_\infty^\times}(\varphi_\infty; E_\infty, x) \cdot \O_{\widehat{B}^\times}(\widehat{\varphi}; E, x)
\end{equation*}
where in general we define
\begin{equation}\label{eq:defifinitepartorbitalintegral}
\O_{\widehat{B}^\times}(\widehat{\varphi}; E, x) := \begin{dcases}
    0, & \text{ if }\Emb(E; B) = \varnothing,\\
    \int_{\iota(\widehat{E}^\times)\backslash \widehat{B}^\times} \widehat{\varphi}(g^{-1}\iota(x)g) \, dg, & \text{ otherwise}.
\end{dcases}
\end{equation}
Since $t > 2$ we have $E_\infty \simeq \R \times \R = A$, with $x$ corresponding to $(N(t)^{1/2}, N(t)^{-1/2})$, as in the discussion leading up to Lemma \ref{lem:adelicclassicalcomparisonB1fixedt}. Choosing the same Haar measure on $A^\times$ as we did earlier one can check that
\begin{equation*}
    \vol(Z_\infty^+ E^\times \backslash E_\A^\times) = 2\reg(E) h(E) \vol_{\widehat{E}^\times}(\widehat{\o_{E}}^\times),
\end{equation*}
where $\reg(E)$ is the regulator of $E$, $\o_{E}$ its ring of integers, and $h(E) = |E^\times \backslash \widehat{E}^\times/\widehat{\o_E}^\times|$ is the class number of $E$. If we compare with Lemma \ref{lem:adelicclassicalcomparisonB1fixedt} we obtain the following identity, which after summing over $t > 2$ generalizes \eqref{eq:connectiontoclassnumbers}. 

\begin{prop}\label{prop:expressionchebyshevprimegeodesic}
Let $\varphi$ be a stable class function on $(\O/q\O)^1$. For $t> 2$, let $E := \Q[X]/(X^2 - tX + 1)$ and $x := X \modulo (X^2 - tX + 1)$.
Then we have
\begin{equation*}
\psi_{\O^1}(t; \varphi) =
    2\reg(E) h(E) \vol_{\widehat{E}^\times}(\widehat{\o_{E}}^\times)\O_{\widehat{B}^\times}(\widehat{\varphi}; E, x).
\end{equation*}
\end{prop}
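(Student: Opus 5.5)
The plan is to compute $\Lambda_{B^\times}(t;\varphi_\A)$ in two ways for a suitable auxiliary archimedean test function, and then cancel the archimedean factor. Fix $t>2$ and choose $\varphi_\infty\in C_c^\infty(B_\infty^1)$ with $Q_{\varphi_\infty}(N(t))\neq 0$. Such a choice exists: take $\varphi_\infty\ge 0$ supported near the diagonal element $\diag(N(t)^{1/2},N(t)^{-1/2})$ and positive there. Its orbital integral over $\iota(E_\infty^\times)\backslash B_\infty^\times$ is then an integral of a nonnegative function that is positive on a nonempty open set, hence positive. Form $\varphi_\A$ as in \eqref{eq:defiadelicfunction}.

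The first computation is Lemma \ref{lem:adelicclassicalcomparisonBtimesfixedt}, which uses the stability of $\varphi$ and gives
\begin{equation*}
\Lambda_{B^\times}(t;\varphi_\A)=Q_{\varphi_\infty}(N(t))\,\psi_{\O^1}(t;\varphi).
\end{equation*}

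The second computation unfolds the definition \eqref{eq:adelicdefinitionhyperbolictermBtimes}, as sketched just before the statement. Suppose first that $\Emb(E;B)=\varnothing$. Then there is no $\gamma\in B^1$ with $\trd(\gamma)=t$, so both $\Lambda_{B^\times}$ and $\psi_{\O^1}(t;\varphi)$ vanish. By convention $\O_{\widehat B^\times}(\widehat\varphi;E,x)=0$ in this case, so the identity holds. Otherwise, fix $\iota\in\Emb(E;B)$. By Skolem--Noether the set $\{\gamma\in B^1:\trd(\gamma)=t\}$ is identified with $\iota(E^\times)\backslash B^\times$, and the centralizer of $\iota(x)$ in $B_\A^\times$ is $\iota(E_\A^\times)$. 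Unfolding then gives
\begin{equation*}
\Lambda_{B^\times}(t;\varphi_\A)=\vol(Z_\infty^+E^\times\backslash E_\A^\times)\int_{\iota(E_\A^\times)\backslash B_\A^\times}\varphi_\A(g^{-1}\iota(x)g)\,dg.
\end{equation*}
This uses Fubini, which is justified because $\varphi_\A$ has compact support and $\varphi_\A\ge0$ may be assumed after splitting $\widehat\varphi$ into real and imaginary, positive and negative parts. Since $\varphi_\A$ is a pure tensor, the adelic orbital integral factors as $\O_{B_\infty^\times}(\varphi_\infty;E_\infty,x)\cdot\O_{\widehat B^\times}(\widehat\varphi;E,x)$. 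Because $t>2$, we have $E_\infty\simeq\R\times\R$ with $x\mapsto(N(t)^{1/2},N(t)^{-1/2})$, so the archimedean factor is exactly $Q_{\varphi_\infty}(N(t))$, provided the same Haar measure is used on $A^\times$.

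The remaining step is the volume identity $\vol(Z_\infty^+E^\times\backslash E_\A^\times)=2\reg(E)h(E)\vol(\widehat{\o_E}^\times)$. I would prove it from the exact sequence
\begin{equation*}
1\to E_\infty^\times\widehat{\o_E}^\times/\o_E^\times\to E_\A^\times/E^\times\to \mathrm{Cl}(E)\to1,
\end{equation*}
which reduces it to computing $\vol(Z_\infty^+\o_E^\times\backslash E_\infty^\times)$. With the measure on $A^\times=(\R^\times)^2$ compatible with $A^1$ and $dy/|y|$, we have $A^\times/Z_\infty^+$ equal to $A^1\times\{\pm1\}$ in the norm direction. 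The group $\o_E^\times=\pm\varepsilon^{\Z}$ acts on $A^1\simeq\{\pm1\}\times\R_{>0}$, where $\log$-length corresponds to $\log y$ under the normalization $\vol\{(y^{1/2},y^{-1/2}):y\in[1,T]\}=\log T$.

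This bookkeeping of factors of $2$ (signs, $\pm1$ in the units, and the norm-sign component $\{\pm1\}$ of $A^\times/Z^+_\infty A^1$) is where I expect the main obstacle. The target is a fundamental domain of measure $2\cdot 2\log\varepsilon$ divided by $2$, i.e. $2\reg(E)$. I would verify this against the split-case identity \eqref{eq:connectiontoclassnumbers} as a sanity check. Finally, equating the two expressions for $\Lambda_{B^\times}(t;\varphi_\A)$ and dividing by $Q_{\varphi_\infty}(N(t))\neq0$ yields the proposition.
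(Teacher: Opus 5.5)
Your strategy is the paper's own. You evaluate $\Lambda_{B^\times}(t;\varphi_\A)$ once through Lemma \ref{lem:adelicclassicalcomparisonBtimesfixedt} and once by unfolding, then cancel $Q_{\varphi_\infty}(N(t))$. Your explicit choice of $\varphi_\infty$ making this factor nonzero fills a step the paper leaves implicit, and it is fine.

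The gap is in the step you flag, the volume identity, and it is more than bookkeeping: your own count does not give $2\reg(E)$. Write $A^1=\{(s,s^{-1})\}$. After dividing by $Z_\infty^+$ you have $\{\pm1\}\times A^1$, i.e.\ four half-lines (sign of the norm times sign of $s$). Each carries the measure $2\,ds/s$, which is what the normalization $\vol\{(y^{1/2},y^{-1/2}):1\le y\le T\}=\log T$ means. The unit $-1$ identifies these half-lines in pairs. Then $\varepsilon$ either translates each of the two remaining half-lines by $\log\varepsilon$ in the variable $\log s$ (if $\nrd(\varepsilon)=1$), or swaps them while translating (if $\nrd(\varepsilon)=-1$). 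Either way
\[
\vol(Z_\infty^+\o_E^\times\backslash E_\infty^\times)=4\log\varepsilon,\qquad \vol(Z_\infty^+E^\times\backslash E_\A^\times)=4\reg(E)h(E)\vol(\widehat{\o_E}^\times),
\]
where $h(E)=|E^\times\backslash\widehat E^\times/\widehat{\o_E}^\times|$ is the wide class number, as the paper defines it. Nothing in the count produces your final ``divided by $2$''. The sign $-1$ has already been spent identifying the half-lines in pairs, so halving again counts it twice.

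The sanity check you propose confirms $4$, not the stated constant. Take $B=M_2(\Q)$, $q=1$, $\varphi=\mathbbm{1}$, $t=3$. The only $\SL_2(\Z)$-class of trace $3$ is that of $\begin{psmallmatrix}2&1\\1&1\end{psmallmatrix}$, with $\nu=1$. Hence $\psi_{\SL_2(\Z)}(3)=\log N(3)=2\log\tfrac{3+\sqrt5}{2}=4\log\tfrac{1+\sqrt5}{2}$. On the other side $\Z[x]=\o_E$, so $\vol(\widehat{\o_E}^\times)\O_{\widehat B^\times}(\widehat\varphi;E,x)=1$, and the right-hand side with constant $2$ equals $2\log\tfrac{1+\sqrt5}{2}$.

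The constant $2$ is correct only in the convention implicit in \eqref{eq:connectiontoclassnumbers}. There $h(d)\log\varepsilon_d=\sqrt d\,L(1,\chi_d)$ forces $h$ to be the narrow class number and $\varepsilon_d$ the fundamental unit of norm $+1$. Since $h^+(E)\log\varepsilon_E^+=2h(E)\log\varepsilon_E$, the two conventions differ by exactly this factor.

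The paper's proof only asserts the volume (``one can check''), so it carries the same factor of $2$. This is harmless for the paper: the constant is universal and cancels when the proposition is applied to both $\D$ and $M_2(\Q)$ in the proof of Proposition \ref{prop:classicalmatchingforstable}. As written, however, your argument does not establish the displayed identity. Drop the unexplained halving, and state the result either with $4$ in place of $2$, or with narrow invariants $h^+(E)$ and $\log\varepsilon_E^+$.
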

In particular, if $\varphi$ is factorizable, so that $\widehat{\varphi} = \prod_p \varphi_p$, then we have
\begin{equation*}
    \psi_{\O^1}(t; \varphi) = 2\reg(E) h(E) \vol_{\widehat{E}^\times}(\widehat{\o_{E}}^\times)\prod_p \O_{B_p^\times}(\varphi_p; E_p, x).
\end{equation*}
Recall here the convention that a local orbital integral $\O_{B_p^\times}(\varphi_p; E_p, x)$ vanishes if $\Emb(E_p; B_p) = \varnothing$. 

\subsection{Proof of Proposition \ref{prop:classicalmatchingforstable}.}\label{subsec:proofofclassicalmatching}
Recall the notation in the statement of Proposition \ref{prop:classicalmatchingforstable}.
By linearity we can assume that $\varphi$ is factorizable. Write $q = \prod_{p}p^{n_p}$. The factorizable function $\varphi$ on $(\O/q\O)^1$ corresponds, via the isomorphisms in \eqref{eq:chineseremainderidentification}, to a function $\widehat{\varphi} \in C_c^\infty(\widehat{\D}^1)$, supported on $\widehat{\O}^1$, which factorizes as $\widehat{\varphi} = \prod_p \varphi_p$, with each component $\varphi_p \in C_c^\infty(\D^1_p)$ supported on $\O_p^1$ and of level $p^{n_p}$. We construct a corresponding function $\widehat{f} \in C_c^\infty(\SL_2(\widehat{\Q}))$ as a product $\widehat{f} := \prod_p f_p$, where the component $f_p \in C_c^\infty(\SL_2(\Q_p))$ is given as follows:
\begin{itemize}
    \item If $p \not \in \Ram(\D)$ there exists a $\Q_p$-algebra isomorphism $r_p: \D_p \rightarrow M_2(\Q_p)$ such that $r_p(\O_p) = M_2(\Z_p)$. In this case we let $f_p := \varphi_p \circ r_p^{-1}$. 
    \item If $p \in \Ram(\D)$, we choose $f_p$ so that $\varphi_p \leftrightarrow f_p$ in the sense of Definition \ref{defi:matchingorbital}. By Proposition \ref{prop:explicitmatching} we can take $f_p \in C_c^\infty(\SL_2(\Z_p))$ of level $p^{n_p + 1}$ and invariant under conjugation by $\GL_2(\Z_p)$.
\end{itemize}
Note that $\widehat{f}$ is supported on $\SL_2(\widehat{\Z})$, is invariant under conjugation by $\GL_2(\widehat{\Z})$, and is of level $q' := \disc(\D)q$. Thus, via the isomorphisms in \eqref{eq:chineseremainderidentification}, the function $\widehat{f}$ corresponds to some stable function $f$ on $\SL_2(\Z/q'\Z)$. Let $t > 2$ be arbitrary, and let $E$ and $x$ be as in Proposition \ref{prop:expressionchebyshevprimegeodesic}. 
Note that Proposition \ref{prop:expressionchebyshevprimegeodesic} applies to $\D^\times$ as well as to $\GL_2$. The choices of test functions guarantee that   
\begin{equation*}
    \O_{\D_p^\times}(\varphi_p; E_p, x) = \O_{\GL_2(\Q_p)}(f_p; E_p, x)
\end{equation*}
for any finite prime $p$. Thus, applying Proposition \ref{prop:expressionchebyshevprimegeodesic} twice we deduce that  
\begin{equation*}
\begin{aligned}
\psi_{\O^1}(t; \varphi) & = 2 \reg(E) h(E) \vol_{\widehat{E}^\times}(\widehat{\o_{E}}^\times)\prod_p \O_{\D_p^\times}(\varphi_p; E_p, x)\\
&  = 2 \reg(E) h(E) \vol_{\widehat{E}^\times}(\widehat{\o_{E}}^\times)\prod_p \O_{\GL_2(\Q_p)}(f_p; E_p, x) \\
& = \psi_{\SL_2(\Z)}(t; f).
\end{aligned}
\end{equation*}
Since this equality holds for any $t > 2$, we deduce that $\Psi_{\O^1}(X; \varphi) = \Psi_{\SL_2(\Z)}(X; f)$ for any $X > 1$, as desired.  

\subsection{Almost stability.}\label{subsec:almoststability}
Consider $\varphi_\A \in C_c^\infty(B_\A^1)$. We know that $\varphi_\A$ is a finite linear combination of functions of the form $\prod_v \varphi_v$, where $\varphi_v \in C_c^\infty(B_v^1)$ for each $v \in \Pl$ and $\varphi_p = \mathbbm{1}_{\SL_2(\Z_p)}$ for almost all primes $p$. Define 
\begin{equation*}
    M = S_{B_\A^\times}(\varphi_\A) := \{g \in B_\A^\times : \varphi_\A^g = \varphi_\A\}.
\end{equation*}
If $S \subset \Pl$ is a finite set of places containing $\infty$, we define 
\begin{equation*}
\widehat{\O}^{S, \times} := \prod_{p \notin S} \O_p^\times, \quad \widehat{\Z}^{S, \times} := \prod_{p \notin S} \Z_p^\times.
\end{equation*}
By the description of $\varphi_\A$ we know that there is a finite set $\{\infty\}\subset S \subset \Pl$ such that $\widehat{\O}^{S, \times} \subset M$. Note that $S$ depends on $\varphi_\A$. We also define
\begin{equation}\label{eq:stabilizerorbitalintegral}
    H = S_{B_\A^\times}(\Lambda_{B^1}(t; \varphi_\A)) :=  \{g \in B_\A^\times : \Lambda_{B^1}(t; \varphi_\A^g) = \Lambda_{B^1}(t; \varphi_\A)\}.
\end{equation}
Clearly $M \subset  H$. Proposition \ref{prop:almoststabilityclassical} will follow quickly from the following result. 

\begin{prop}\label{prop:stability}
Let $t > 2$ and consider the field $E := \Q[X]/(X^2 - tX + 1)$. If $\Ram(E) \nsubseteq S$, then $H = B_\A^\times$.
\end{prop}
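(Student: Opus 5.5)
Throughout I use the notation of \eqref{eq:defitracekernel}, \eqref{eq:adelicdefinitionhyperbolicterm} and \eqref{eq:stabilizerorbitalintegral}, and write $x \in E$ for the class of $X$, so that the $\gamma$ in the sum defining $K_t$ are the $B^\times$-conjugates of $\iota(x)$. The plan is to show that $H$ is a subgroup of $B_\A^\times$ containing several pieces that together generate everything. First, $M \subset H$ and $H$ is a group: if $h\in H$ and $g\in H$ then $\Lambda_{B^1}(t;\varphi_\A^{gh}) = \Lambda_{B^1}(t;(\varphi_\A^{g})^{h})$. This needs that the stabilizer of $\Lambda$ under conjugation does not depend on the function, so I would instead work with the group $H' = \{g : \Lambda_{B^1}(t;\psi^g) = \Lambda_{B^1}(t;\psi) \text{ for all } \psi\}$. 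I would then show $H' = B_\A^\times$, which forces $H = B_\A^\times$.

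\emph{Easy generators.} Next I identify subgroups contained in $H'$. (a) $B^\times \subset H'$: the substitution $g \mapsto \delta^{-1} g \delta$ preserves $B^1\backslash B_\A^1$ and its measure, and the set $\{\gamma\in B^1:\trd\gamma=t\}$ is $B^\times$-stable. (b) $B_\A^1 \subset H'$: this follows by right-invariance of the measure on $B^1\backslash B_\A^1$. (c) The center $\A^\times$ acts trivially on $\varphi_\A$ by conjugation. Consequently $H'$ contains $\A^\times B^\times B_\A^1$, which is normal since it contains the commutator subgroup. Hence it suffices to show that $\nrd(H')$ is all of $\A^\times$. By the argument above, $\nrd(H') \supset \A^{\times 2}\Q^\times$.

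\emph{Main step.} This step uses the orbital-integral structure of $\Lambda_{B^1}$. Unfold $\Lambda_{B^1}(t;\psi)$ as a sum over $B^1$-conjugacy classes inside the single $B^\times$-class of $\iota(x)$. These classes are indexed by $\nrd(B^\times)/\nrd(E^\times) = \Q^\times/\nrd(E^\times)$. Each class contributes $\vol(E^1\backslash E^1_\A)$ times an integral over $\iota(E_\A^1)\backslash B_\A^1$. Conjugation by $g \in B_\A^\times$ permutes these adelic $B_\A^1$-orbits via $\nrd(g)$ modulo $\nrd(E_\A^\times)$. Global classes that are everywhere locally conjugate differ by an element of $\Q^\times\cap \nrd(E_\A^\times)$ modulo $\nrd(E^\times)$, which is trivial by the Hasse norm theorem. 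The pairing with the idele class character $\chi_E$ of $E/\Q$ (Artin reciprocity) shows that $\Lambda_{B^1}(t;\psi)$ equals $\sum_{\lambda}\int_{\iota(E_\A^\times)\backslash B_\A^\times} \psi(g^{-1}\iota(x)g)\,\lambda(\nrd g)\,dg$ (suitably normalized), where $\lambda$ runs over $\{1,\chi_E\}$. Only the trivial-character term is conjugation invariant. The $\chi_E$-term is a product of local $\kappa$-orbital integrals and transforms by $\chi_E(\nrd g)$.

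\emph{Killing the $\kappa$-term.} Under the hypothesis, pick $p \in \Ram(E)\setminus S$. At $p$ we have $\varphi_p$ equal to the unit-ball characteristic function, invariant under $\O_p^\times$-conjugation. Because $E_p/\Q_p$ is ramified, $\nrd(\O_p^\times)=\Z_p^\times \not\subset \nrd(E_p^\times)$. So there is $u \in \O_p^\times$ with $\chi_{E,p}(\nrd u) = -1$ while $\varphi_p^u=\varphi_p$. Then the local $\kappa$-orbital integral at $p$ equals its own negative, hence vanishes. Therefore $\Lambda_{B^1}(t;\varphi_\A)$ equals the stable term, which is invariant under all of $B_\A^\times$, and this gives $H = B_\A^\times$. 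I expect the main obstacle to be the precise bookkeeping in the main step. This means decomposing the $B^\times$-orbit into $B^1$-orbits, together with the correct volumes $\vol(E^1\backslash E^1_\A)$ and the Hasse norm principle, so that the unstable part really factors through $\chi_E$. I would first carry this out for factorizable $\varphi_\A$ and then extend by linearity.
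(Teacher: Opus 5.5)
Your first paragraph proposes a reduction that is false and should be dropped. The group $H'=\{g:\Lambda_{B^1}(t;\psi^g)=\Lambda_{B^1}(t;\psi)\text{ for all }\psi\}$ is in general \emph{not} $B_\A^\times$. Write $\Lambda_{B^1}=\Lambda^{\mathrm{st}}+\Lambda^{\kappa}$ as in your main step. Your own decomposition then gives $\Lambda_{B^1}(t;\psi^g)-\Lambda_{B^1}(t;\psi)=(\chi_E(\nrd g)-1)\Lambda^{\kappa}(t;\psi)$. Hence $H'$ is the index-two subgroup $N:=\nrd^{-1}(\Q^\times\nrd(E_\A^\times))$ as soon as $\Lambda^{\kappa}(t;\psi)\neq 0$ for a single $\psi$. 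When $\Emb(E;B)\neq\varnothing$ this happens for suitable $\psi$, e.g.\ with local components concentrated near $\iota(x)$ at the primes of $\Ram(E)\cup\Ram(B)$. The hypothesis $\Ram(E)\nsubseteq S$ is a statement about the particular $\varphi_\A$ (through $S$ and $M$) and says nothing about arbitrary $\psi$. Were $H'=B_\A^\times$, Corollary \ref{cor:almoststabilityclassical} would hold for every $t$ and Proposition \ref{prop:almoststabilityclassical} would need no error term. So ``it suffices to show $\nrd(H')=\A^\times$'' is a dead end.

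Your worry that $H$ is not obviously a group is legitimate, but the right fix is $H'M\subset H$. For $g\in H'$ and $m\in M$ one has $\varphi_\A^{gm}=(\varphi_\A^{m})^{g}=\varphi_\A^{g}$, so $\Lambda_{B^1}(t;\varphi_\A^{gm})=\Lambda_{B^1}(t;\varphi_\A)$. Since $N\subset H'$ is normal, $NM$ is a subgroup containing $B_\A^1$ with $\nrd(NM)\supset \Q^\times\nrd(E_\A^\times)\widehat{\Z}^{S,\times}$. This is exactly how the paper combines its observations a)--c) with $M$.

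That said, your last two paragraphs never use $H'$, and they do prove the statement directly for $\varphi_\A$. Take $p\in\Ram(E)\smallsetminus S$ and $u\in\O_p^\times\subset M$ with $\chi_E(\nrd u)=-1$. Then $\Lambda^\kappa(t;\varphi_\A)=\Lambda^\kappa(t;\varphi_\A^u)=-\Lambda^\kappa(t;\varphi_\A)=0$, so $\Lambda_{B^1}(t;\varphi_\A^g)=\Lambda_{B^1}(t;\varphi_\A)$ for every $g$. This uses only the global transformation law. You need neither factorization nor the claim that $\varphi_p$ is the unit-ball function; with the paper's definition of $S$, individual factorizable summands need not be $\O_p^\times$-invariant, so ``extend by linearity'' would need an averaging step anyway.

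Compared with the paper, the arithmetic input is the same. The paper uses no $\kappa$-orbital integrals and no Hasse norm theorem. It reads off from \eqref{eq:rewritingB1contributionstep3} that $B^\times$, $B_\A^1$ and $\nrd^{-1}(\nrd(E_\A^\times))$ stabilize $\Lambda_{B^1}(t;\cdot)$. It then concludes with Lemma \ref{lem:subgroupequaltoideles}: $\Q^\times\nrd(E_\A^\times)$ has index two and does not contain $\widehat{\Z}^{S,\times}$. Your $\chi_E$ is precisely the character with kernel $\Q^\times\nrd(E_\A^\times)$, and your $u$ is an element of $M$ whose norm lies outside it. The paper's route is lighter; in yours, Hasse only fixes a normalizing constant and plays no role in stability. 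Yours gives more: it identifies the obstruction $(\chi_E(\nrd g)-1)\Lambda^{\kappa}(t;\varphi_\A)$ when $\Ram(E)\subseteq S$.
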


Before giving the proof of Proposition \ref{prop:stability} we need to collect a technical lemma. 

\begin{lem}\label{lem:subgroupequaltoideles}
Let $E/\Q$ be a quadratic extension and let $S \subset \Pl$ be a finite set contaning $\infty$. Then $\Ram(E) \nsubseteq S$ if and only if $\Q^\times \nrd(E_\A^\times) \widehat{\Z}^{S, \times} = \A^\times$.
\end{lem}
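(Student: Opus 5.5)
Both directions go through global class field theory for the quadratic extension $E/\Q$. Artin reciprocity identifies the subgroup $\Q^\times \nrd(E_\A^\times)$ of $\A^\times$ as the closed subgroup of index two that is the kernel of the idele class character $\chi_E = \prod_v \chi_{E_v}$. Here $\chi_{E_v}$ is the local quadratic character attached to $E_v/\Q_v$; it is trivial exactly when $E_v$ is split. Consequently $\Q^\times \nrd(E_\A^\times)\widehat{\Z}^{S,\times}$ is a subgroup of $\A^\times$ containing this index-two subgroup. It equals $\A^\times$ if and only if $\widehat{\Z}^{S,\times} \not\subset \ker \chi_E$, that is, if and only if $\chi_E$ is nontrivial on $\widehat{\Z}^{S,\times} = \prod_{p\notin S}\Z_p^\times$.

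It therefore remains to show that $\chi_E$ is nontrivial on $\prod_{p \notin S}\Z_p^\times$ if and only if some $p \notin S$ ramifies in $E$. Since $\widehat{\Z}^{S,\times}$ sits inside $\A^\times$ via the embeddings $\iota_p$, the restriction of $\chi_E$ to it is $\prod_{p\notin S}\chi_{E_p}|_{\Z_p^\times}$. This product is nontrivial if and only if some factor $\chi_{E_p}|_{\Z_p^\times}$ is nontrivial.

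\textbf{Local step.} By local class field theory, $\chi_{E_p}$ is trivial on $\Z_p^\times$ exactly when $\Z_p^\times \subset \nrd(E_p^\times)$. This holds if and only if $E_p/\Q_p$ is split or unramified, since the norm group of the unramified quadratic extension is $\Z_p^\times p^{2\Z}$, while a ramified quadratic extension has norm group of index two meeting $\Z_p^\times$ in an index-two subgroup. So $\chi_{E_p}|_{\Z_p^\times} \neq 1$ precisely when $p \in \Ram(E)$. Combining this with the previous paragraph, $\Q^\times\nrd(E_\A^\times)\widehat{\Z}^{S,\times} = \A^\times$ holds if and only if there is a prime $p \notin S$ with $p \in \Ram(E)$, i.e. $\Ram(E)\nsubseteq S$. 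Since $\infty \in S$, the archimedean place plays no role, regardless of whether one counts $\infty$ as ramified.

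The only points needing care, and the part I expect to be the main obstacle to write cleanly, are these. First, one must correctly invoke the global reciprocity statement $[\A^\times : \Q^\times \nrd(E_\A^\times)] = 2$ with kernel description, for instance from Neukirch's or Cassels--Fröhlich's global class field theory. Second, one must justify that a character of the infinite product $\prod_{p\notin S}\Z_p^\times$ factoring through local components is nontrivial iff some local component is nontrivial; this is immediate because each $\Z_p^\times$ embeds as a factor. If one prefers to avoid idelic language, one can alternatively phrase the argument with the Dirichlet character of conductor $|\disc(E)|$, using $\widehat{\Z}^\times/\prod_p(1+p^{v_p(\disc E)}\Z_p)\simeq(\Z/|\disc E|\Z)^\times$ and the fact that $\chi_E$ restricted to $\Z_p^\times$ is nontrivial exactly for $p\mid \disc(E)$.
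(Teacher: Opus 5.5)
Your proposal is correct and follows essentially the same route as the paper. Both arguments use Artin reciprocity to see $\Q^\times\nrd(E_\A^\times)$ as an index-two kernel. Both use the local fact that $\Z_p^\times\subseteq\nrd(E_p^\times)$ exactly when $E_p/\Q_p$ is split or unramified. Both then conclude in each direction with the index-two argument.

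One small difference is in your favour. Writing $\chi_E=\prod_v\chi_{E_v}$ on $\widehat{\Z}^{S,\times}$ justifies the passage from the individual factors $\Z_p^\times$ to the infinite product $\widehat{\Z}^{S,\times}$. The paper uses that step without comment.
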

\begin{proof}
The result follows from Artin reciprocity, see \cite[Chapters VI and VII]{casselsfrohlich} for a nice exposition of class field theory. Recall that for a place $v \in \Pl$ we have $E_v := E \otimes_\Q \Q_v$, which is a separable algebra of degree two over $\Q_v$. If $E_v$ is not a field, then $E_v \simeq \Q_v \times \Q_v$, thus we have $E^\times \simeq \Q_v^\times \times \Q_v^\times$ and $\nrd(E_v^\times) = \Q_v^\times$. 

Let $p \in \Primes$, and suppose that $E_p$ is a field. We have a commutative diagram 
\begin{equation*}
\begin{tikzcd}
\Q_p^\times \arrow[rr, "\psi_p"] \arrow[d, hook, "\iota_p"] &  & \Gal(E_p/\Q_p) \arrow[d, "\simeq"] \\
\A^\times \arrow[rr, "\psi"]                     &  & \Gal(E/\Q)                    
\end{tikzcd}
\end{equation*}
where $\psi$ is the global Artin map and $\psi_p$ is the local Artin map. The maps $\psi_p$, $\psi$ are surjective with kernels 
\begin{equation*}
    \ker \psi_p = \nrd(E_p^\times), \quad \ker \psi = \Q^\times \nrd(E_\A^\times).
\end{equation*}
In particular, we have $[\A^\times: \Q^\times \nrd(E_\A^\times)] = 2$. Furthermore, locally we know that $\Z_p^\times \subseteq \nrd(E_p^\times)$ if and only if $E_p$ is unramified. Thus, if $p \in \Ram(E)$ we have $\Z_p^\times \nsubseteq \ker \psi_p$. 

If $\Ram(E) \nsubseteq S$ we deduce that   $\widehat{\Z}^{S, \times} \nsubseteq \ker \psi = \Q^\times \nrd(E_\A^\times)$. Thus, in this case the group $\Q^\times \nrd(E_\A^\times) \widehat{\Z}^{S, \times}$ is strictly larger than $\Q^\times \nrd(E_\A^\times)$. Since $[\A^\times: \Q^\times \nrd(E_\A^\times)] = 2$, the only option is that $\Q^\times \nrd(E_\A^\times) \widehat{\Z}^{S, \times} = \A^\times$, as desired.

On the other hand, if $\Ram(E) \subseteq S$, then $\Z_p^\times \subseteq \nrd(E_p^\times) = \ker \psi_p$ for all $p \in \Primes - S$. It follows that $\widehat{\Z}^{S, \times} \subseteq \ker \psi = \Q^\times \nrd(E_\A^\times)$. Thus, $[\A^\times: \Q^\times \nrd(E_\A^\times)\widehat{\Z}^{S, \times}] = 2$, and we have $ \Q^\times \nrd(E_\A^\times)\widehat{\Z}^{S, \times} \neq \A^\times$, as desired.
\end{proof}

\begin{proof}[Proof of Proposition \ref{prop:stability}.]
We start by rewriting $\Lambda_{B^1}(t; \varphi)$ as a sum of orbital integrals, recall \eqref{eq:adelicdefinitionhyperbolicterm}. Given $\gamma \in B^1$, let $C(\gamma)$ be its centralizer in $B^1$. If we group the elements of $B^1$ by conjugacy classes we see that
\begin{equation}\label{eq:rewritingB1contributionstep1}
    \Lambda_{B^1}(t; \varphi) = \sum_{\substack{\{\gamma\}_{B^1}\\
    \trd(\gamma)= t}} \int_{C(\gamma)\backslash B^1_\A}\varphi(g^{-1}\gamma g)\, dg.
\end{equation}
We let $x := X \modulo (X^2 - tX + 1) \in E$. We have a bijection 
\begin{equation*}
    \Emb(E; B) \longleftrightarrow \{\gamma \in B^1: \trd(\gamma) = t\}
\end{equation*}
which assigns to $\iota \in \Emb(E; B)$ the element $\iota(x) \in B^1$. The bijection respects the action by conjugation of $B^1$ on each side, so taking equivalence classes we obtain
\begin{equation*}
    \Emb(E; B)/B^1 \longleftrightarrow \{\{\gamma\}_{B^1}: \trd(\gamma) = t\}.
\end{equation*}
It follows that we can rewrite equation \eqref{eq:rewritingB1contributionstep1} as 
\begin{equation}\label{eq:rewritingB1contributionstep2}
    \Lambda_{B^1}(t; \varphi) = \vol(E^1 \backslash E^1_\A) \sum_{\iota \in \Emb(E, B)/B^1} \int_{\iota(E_\A^1)\backslash B_\A^1}\varphi(g^{-1}\iota(x)g) \, dg.
\end{equation}
Let us introduce the notation
\begin{equation}\label{eq:defiglobalorbitalB1}
    \O^1(\varphi; E, x, \iota) := \int_{\iota(E^1_\A)\backslash B_\A^1}\varphi(g^{-1}\iota(x)g)\, dg,
\end{equation}
so that equation \eqref{eq:rewritingB1contributionstep2} can be rewritten as
\begin{equation}\label{eq:rewritingB1contributionstep3}
    \Lambda_{B^1}(t; \varphi)  = \vol(E^1 \backslash E^1_\A) \sum_{\iota \in \Emb(E, B)/B^1} \O^1(\varphi; E, x, \iota).
\end{equation}
We collect the following observations about these orbital integrals:
\begin{enumerate}
    \item [i)] If $h \in B^\times$ then $\O^1(\varphi; E, x, \iota) = \O^1(\varphi^h; E, x, \prescript{h}{}\iota)$.
    \item [ii)] If $h \in \iota(E_\A^\times) \subset B_\A^\times$ then $\O^1(\varphi^h; E, x, \iota) = \O^1(\varphi; E, x, \iota)$.
    \item [iii)] If $h \in B^1_\A$ then $\O^1(\varphi^h; E, x, \iota) = \O^1(\varphi; E, x, \iota)$.
\end{enumerate}
Parts i) and ii) follow by applying the change of variables $g \mapsto h^{-1}gh$ in the integral of \eqref{eq:defiglobalorbitalB1}. In part ii) we also use that, since $h \in\iota(E_\A^\times)$, then $h$ centralizes $\iota(E_\A^1)$ as well as $\iota(x)$. Part iii) follows from the right-invariance of the measure on $\iota(E_\A^1)\backslash B_\A^1$.

As a consequence, we obtain the following information about the group $H$ defined in \eqref{eq:stabilizerorbitalintegral}:
\begin{enumerate}
    \item [a)] We have $B^\times \subset H$. Indeed, if $h \in B^\times$ we see by part i) above that  
\begin{equation*}
\begin{aligned}
    \Lambda_{B^1}(t; \varphi^h) & = \vol(E^1 \backslash E^1_\A) \sum_{\iota \in \Emb(E, B)/B^1} \O^1(\varphi^h; E, x, \iota)\\
& = \vol(E^1 \backslash E^1_\A) \sum_{\iota \in \Emb(E, B)/B^1} \O^1(\varphi; E, x, \prescript{h^{-1}}{}\iota)\\
& = \Lambda_{B^1}(t; \varphi).
\end{aligned}
\end{equation*}
The last equality holds because $\iota \mapsto \prescript{h^{-1}}{}\iota$ is a bijection from $\Emb(E, B)/B^1$ onto itself.
    \item [b)] We have $B_\A^1 \subset H$. Indeed, this is clear from part iii) above.
    \item [c)] We have $\nrd(E_\A^\times) \subset \nrd(H)$. Indeed, by parts ii) and iii) we know that 
    \begin{equation*}
        \bigcap_{\iota \in \Emb(E, B)} B_\A^1\iota(E_\A^\times) \subset H.
    \end{equation*}
    But we have $\nrd(\iota(E_\A^\times)) = \nrd(E_\A^\times)$, which is independent of the embedding $\iota$. Since $B_\A^1$ is the kernel of $\nrd: B_\A^\times \rightarrow \A^\times$, it follows that $B_\A^1 \iota(E_\A^\times) = \nrd_{B_\A^\times}^{-1}(\nrd(E_\A^\times))$, which is independent of $\iota$. We deduce that $\nrd(E_\A^\times) \subset \nrd(H)$, as claimed. 
\end{enumerate}
Since $B^1_\A \subset H$ by part b), showing that $H = B_\A^\times$ is equivalent to showing that $\nrd(H) =\A^\times$. We know that $\nrd(B^\times) = \Q^\times$, recall Lemma \ref{lem:surjectivitynorm} above. Together with observations a) and c) this shows that 
\begin{equation*}
    \Q^\times \nrd(E_\A^\times)\nrd(M) \subset \nrd(H).
\end{equation*}
Recall that $S \subset \Pl$ is a finite set of places containing  $\infty$ such that $\widehat{\O}^{S, \times} \subset M$. Taking reduced norms it is clear that $\widehat{\Z}^{S, \times} \subset \nrd(M)$. Thus, the proof of Proposition \ref{prop:stability} is finished by applying Lemma \ref{lem:subgroupequaltoideles}.
\end{proof}

Given a positive integer $q$, we let $\Primes(q)$ denote the set of primes dividing $q$. Suppose that the function $\varphi_\A$ in the previous argument is constructed from a class function $\varphi$ on $(\O/q\O)^1$, as in equation \eqref{eq:defiadelicfunction}. In this situation we can take $S = \{\infty\} \cup \Primes(q)$, and we obtain the following consequence. 

\begin{cor}\label{cor:almoststabilityclassical}
Let $\varphi$ be a class function on $(\O/q\O)^1$. Let $t > 2$, and consider the field $E_t := \Q[X]/(X^2 - tX + 1)$. If $\Ram(E_t) \nsubseteq \Primes(q)$, then 
\begin{equation*}
    \psi_{\O^1}(t; \varphi) = \psi_{\O^1}(t; \varphi^\gamma)
\end{equation*}
for any $\gamma \in (\O/q\O)^\times$. 
\end{cor}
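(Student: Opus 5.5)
The plan is to combine Lemma \ref{lem:adelicclassicalcomparisonB1fixedt} with Proposition \ref{prop:stability}, with $S = \{\infty\} \cup \Primes(q)$. Start from the class function $\varphi$ on $(\O/q\O)^1$. Let $\widehat{\varphi}$ be the corresponding function on $\widehat{\O}^1$ of level $q$, extended by zero to $\widehat{B}^1$. Fix an auxiliary $\varphi_\infty \in C_c^\infty(B_\infty^1)$ with $Q_{\varphi_\infty}(N(t)) \neq 0$; for instance, take $\varphi_\infty \geq 0$ supported near, and positive at, $\diag(N(t)^{1/2}, N(t)^{-1/2})$. Then form $\varphi_\A$ as in \eqref{eq:defiadelicfunction}.

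\textbf{Step 1: identify the conjugated function.} Lift $\gamma \in (\O/q\O)^\times$ to an element $\widehat{g} \in \widehat{\O}^\times$. Note that $\widehat{\O}^\times \to (\O/q\O)^\times$ is surjective, since $\widehat{\O}^\times = \prod_p \O_p^\times$ and each $\O_p^\times \to (\O_p/p^{n_p}\O_p)^\times$ is surjective. Let $g \in B_\A^\times$ be the adele with trivial archimedean component and finite part $\widehat{g}$. Conjugation by $\widehat{\O}^\times$ preserves $\widehat{\O}^1$ and $\widehat{\O}^1(q)$. It is also compatible with reduction modulo $q$ under the identifications \eqref{eq:chineseremainderidentification}. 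Hence $\varphi_\A^{g}$ is exactly the adelic function attached, via \eqref{eq:defiadelicfunction}, to $\varphi_\infty$ and to a conjugate of $\varphi$ by $\gamma$ or $\gamma^{-1}$. The direction of the conjugation is irrelevant, since the claim is to be shown for every $\gamma$. Applying Lemma \ref{lem:adelicclassicalcomparisonB1fixedt} to both functions gives
\begin{equation*}
\Lambda_{B^1}(t;\varphi_\A) = Q_{\varphi_\infty}(N(t))\,\psi_{\O^1}(t;\varphi), \qquad \Lambda_{B^1}(t;\varphi_\A^{g}) = Q_{\varphi_\infty}(N(t))\,\psi_{\O^1}(t;\varphi^\gamma).
\end{equation*}
Here we use that $\varphi^\gamma$ is again a class function on $(\O/q\O)^1$, because $(\O/q\O)^1$ is normal in $(\O/q\O)^\times$.

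\textbf{Step 2: apply stability.} Check that $S = \{\infty\}\cup\Primes(q)$ is an admissible choice, that is, $\widehat{\O}^{S,\times} \subset M$. At every $p \nmid q$ the local factor of $\varphi_\A$ is $\mathbbm{1}_{\O_p^1}$, which is invariant under conjugation by $\O_p^\times$, since $\O_p^\times$ normalizes $\O_p$ and preserves the reduced norm. By linearity we may reduce to factorizable $\varphi$, or simply note that $\widehat{\varphi}$ depends only on the components at $p \mid q$. The hypothesis $\Ram(E_t) \nsubseteq \Primes(q)$ says exactly that $\Ram(E_t) \nsubseteq S$, because $\Ram(E_t)$ consists of finite primes. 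Proposition \ref{prop:stability} then gives $H = B_\A^\times$, and in particular $g \in H$. Therefore $\Lambda_{B^1}(t;\varphi_\A^{g}) = \Lambda_{B^1}(t;\varphi_\A)$. Dividing by $Q_{\varphi_\infty}(N(t)) \neq 0$ yields the corollary.

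\textbf{Main obstacle.} The only step needing care is the bookkeeping in Step 1. We must confirm that conjugating $\varphi_\A$ by a finite adele in $\widehat{\O}^\times$ corresponds precisely to $\varphi^{\gamma}$ under \eqref{eq:chineseremainderidentification}, with the archimedean factor $\varphi_\infty$ untouched. We must also confirm that $\varphi_\infty$ can be chosen with $Q_{\varphi_\infty}(N(t))\neq 0$. The latter follows from positivity of the orbital integral of a nonnegative function that is nonzero at a point of the orbit. The former holds because the conjugation acts only at the primes dividing $q$, where it is reduction-compatible.
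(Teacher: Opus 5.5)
Your proposal is correct and is the paper's argument: the paper's proof simply combines Proposition~\ref{prop:stability}, with $S=\{\infty\}\cup\Primes(q)$, and Lemma~\ref{lem:adelicclassicalcomparisonB1fixedt}. The details you add---lifting $\gamma$ to $\widehat{\O}^\times$, checking that $\varphi_\A^{g}$ is the adelic function attached to the conjugated class function, and choosing $\varphi_\infty$ with $Q_{\varphi_\infty}(N(t))\neq 0$ so that you can divide---are exactly what the paper's one-line proof leaves implicit.
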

\begin{proof}
This follows immediately from Proposition \ref{prop:stability} and Lemma \ref{lem:adelicclassicalcomparisonB1fixedt}.
\end{proof}

\subsection{Proof of Proposition \ref{prop:almoststabilityclassical}.}\label{subsec:proofofalmoststability}
We first observe that none of the individual pieces $\psi_{\O^1}(t; \varphi)$ is too large. Recall the notation $\psi_{\O^1}(t)$ from \eqref{eq:untwistedchebyshevfixedtrace}. Recall also that $\D$ is the indefinite quaternion divison algebra which contains the maximal order $\O$.

\begin{lem}\label{lem:basicboundorbitalintegral}
For any $\varepsilon > 0$ and $t > 2$ we have 
\begin{equation*}
    \psi_{\O^1}(t) \ll_{\varepsilon, \D} t^{1 + \varepsilon}.
\end{equation*}
\end{lem}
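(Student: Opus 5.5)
The plan is to use Proposition \ref{prop:expressionchebyshevprimegeodesic} with the stable function $\varphi = \mathbbm{1}$. The constant function is trivially invariant under conjugation by $(\O/q\O)^\times$, and $\psi_{\O^1}(t) = \psi_{\O^1}(t;\mathbbm{1})$. This gives
\begin{equation*}
\psi_{\O^1}(t) = 2\reg(E)h(E)\vol_{\widehat{E}^\times}(\widehat{\o_E}^\times)\prod_p \O_{\D_p^\times}(\mathbbm{1}_{\O_p^1}; E_p, x),
\end{equation*}
where $E = E_t$. The proof then splits into two parts: the archimedean-type factor $\reg(E)h(E)$, and the product of local orbital integrals. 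The first part is classical, and I expect the local factors to be the main work.

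\textbf{Step 1: the class number factor.} By the class number formula, $2\reg(E)h(E) = \sqrt{d_E}\,L(1,\chi_{d_E})$, where $d_E$ is the fundamental discriminant of $E$. Since $t^2-4 = d_E u^2$, we have $d_E \le t^2$. Combined with the elementary bound $L(1,\chi_{d_E}) \ll \log d_E$, this gives $\reg(E)h(E) \ll t^{1+\varepsilon}$. The factor $\vol_{\widehat{E}^\times}(\widehat{\o_E}^\times)$ is a fixed normalization of the Haar measure on $\widehat{E}^\times$. I would choose the measure so that this factor equals $1$; otherwise it is some fixed product of local volumes, which I would match against the local orbital integrals in Step 2.

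\textbf{Step 2: the local orbital integrals.} For each $p$, the integral $\O_{\D_p^\times}(\mathbbm{1}_{\O_p^1};E_p,x)$ is supported on $h(x)\in\Z_p^2$. The set $\O_p^1 \cap h^{-1}(t,1)$ is $K_{\O_p}$-conjugation invariant, and it decomposes according to the value $n=n_{\O_p}(x)$.

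\emph{Split primes.} At $p\notin\Ram(\D)$, Lemma \ref{lem:orbitalintegralOnadapted} applies to each piece, giving
\begin{equation*}
\sum_{n \le v_p(u)} \frac{\vol(\O_p^\times)}{\vol(\o_{E_p,n}^\times)}\, m(\o_{E_p,n};\M,\M^\times).
\end{equation*}
Here $x$ lies in $\o_{E_p,n}$ only for $n\le v_p(u)$, where $u$ is the conductor of $\Z[x]$ in $\o_E$. Since $\M^\times$ has index $2$ in $K_\M$ in the relevant sense, Lemma \ref{lem:optimalembeddingsOtimes} gives $m\le 2$. The volume ratio is $[\o_{E_p}^\times:\o_{E_p,n}^\times] \le p^n(1+1/p)$.

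\emph{Ramified primes.} At $p\in\Ram(\D)$, only $n=0$ contributes, by Lemma \ref{lem:optimalembeddingsKO}, and the factor is $O(1)$.

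\emph{Primes with $p\nmid u$.} At all $p\nmid u$ with $p\notin\Ram(\D)$, the local factor equals $1$ exactly, given the normalizations $\vol(\O_p^\times)=\vol(\o_{E_p}^\times)=1$ and $m(\o_{E_p};\M,\M^\times)=1$ in this unramified setting.

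Multiplying over $p\mid u$, the product of local factors is
\begin{equation*}
\ll \prod_{p\mid u}(2+\varepsilon)\sum_{n\le v_p(u)}p^n\cdot 2^{\omega(\disc\D)} \ll_\D \tau(u)^{O(1)}\sigma(u) \ll u^{1+\varepsilon}.
\end{equation*}

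\textbf{Step 3: combining.} Step 1 actually gives the sharper bound $\reg(E)h(E) \ll d_E^{1/2+\varepsilon}$. Using $d_E^{1/2}u = (t^2-4)^{1/2} < t$, the total is
\begin{equation*}
\psi_{\O^1}(t) \ll d_E^{1/2+\varepsilon}u^{1+\varepsilon} \ll t^{1+2\varepsilon},
\end{equation*}
which proves the lemma after renaming $\varepsilon$.

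The main obstacle is the bookkeeping in Step 2. One must make sure the measure normalizations are consistent, so that the local factor at primes $p\nmid u$ (including those where $E_p$ is ramified) is exactly $1$ and not merely bounded. One must also check that the embedding numbers $m(\o_{E_p,n};\O_p,\O_p^\times)$ are uniformly bounded; I expect this to follow from Lemma \ref{lem:optimalembeddingsOtimes}, since the index of $\O_p^\times$ in $K_{\O_p}$ modulo $\Q_p^\times$ is at most $2$.

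As a fallback that avoids orbital integrals altogether, one can bound $\psi_{\O^1}(t)$ directly. The sum counts, up to $\O^1$-conjugacy, optimal embeddings of the orders $\Z[x]\subset \o_{E,n}\subset\o_E$ into $\O$, each weighted by its regulator. By Eichler's formula this count is at most $2^{\omega(\disc\D)}\,h(\o_{E,n})$ per order. Summing $h(\o_{E,n})\reg(\o_{E,n})$ over the orders $\Z+f\o_E$ with $f\mid u$ gives $\ll \sum_{f\mid u} f\, d_E^{1/2+\varepsilon} \ll t^{1+\varepsilon}$, the same bound.
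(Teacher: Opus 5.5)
Your argument is correct, but it takes a different route from the paper.

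\textbf{The paper's route.} The paper never evaluates the quaternionic orbital integrals of $\mathbbm{1}_{\O_p^1}$. It applies Proposition \ref{prop:classicalmatchingforstable} with $\varphi=\mathbbm{1}$ to get $\psi_{\O^1}(t)=\psi_{\SL_2(\Z)}(t;f)$ for some $f$ on $\SL_2(\Z/\disc(\D)\Z)$. It then bounds this by $\max|f|\,\bigl(\Psi_{\SL_2(\Z)}(N(t))-\Psi_{\SL_2(\Z)}(N(t)-y)\bigr)$ and quotes Iwaniec's short-interval estimate \cite[Lemma 4]{Iwaniec1984}. So the paper hands all local bookkeeping to the matching theorem plus an external bound. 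Here the relevant matching is the Tang--Wu--Yang--Yang case, Proposition \ref{prop:matchingtan}, since $\mathbbm{1}_{\O_\D^\times}$ is a single coset of infinite depth.

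\textbf{Your route.} You stay on the quaternion side and need only Proposition \ref{prop:expressionchebyshevprimegeodesic}, the class number formula, and Lemmas \ref{lem:optimalembeddingsKO}, \ref{lem:orbitalintegralOnadapted} and \ref{lem:optimalembeddingsOtimes}. This is self-contained and more informative: it is in effect the quaternionic analogue of \eqref{eq:connectiontoclassnumbers}. It gives $\psi_{\O^1}(t)\ll_\D \sqrt{d_E}\,L(1,\chi_{d_E})\,2^{\omega(u)}\sigma(u)$, and shows $\psi_{\O^1}(t)=0$ whenever some $p\mid\disc(\D)$ splits in $E_t$.

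\textbf{Small corrections.}
\begin{enumerate}
\item At $p\notin\Ram(\D)$ you have $K_\M=\Q_p^\times\M^\times\subseteq(\iota(E_p^\times)\cap K_\M)\M^\times$. Lemma \ref{lem:optimalembeddingsOtimes} therefore gives $m(\o_{E_p,n};\M,\M^\times)=1$ exactly. The index $[K_{\O}:\Q_p^\times\O^\times]=2$ occurs only at $p\in\Ram(\D)$, so ``index 2 in $K_\M$'' is inaccurate, though your bound $m\le 2$ is still true.
\item The same observation disposes of what you call the main obstacle. With $\vol(\O_p^\times)=\vol(\o_{E_p}^\times)=1$, the local factor at $p\nmid u\disc(\D)$ is exactly $1$, whether or not $E_p$ is ramified.
\item Normalizing $\vol(\widehat{\o_E}^\times)=1$ is legitimate. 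The product $\vol_{\widehat{E}^\times}(\widehat{\o_E}^\times)\,\O_{\widehat{B}^\times}(\widehat{\varphi};E,x)$ does not depend on the choice of Haar measure on $\widehat{E}^\times$.
\end{enumerate}
The Eichler-formula fallback is not needed.
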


\begin{proof}
Applying Proposition \ref{prop:classicalmatchingforstable}
we find $f$ on $\SL_2(\Z/\disc(\D)\Z)$ such that $\psi_{\O^1}(t) = \psi_{\SL_2(\Z)}(t; f)$. Thus, we have
\begin{equation*}
    \psi_{\O^1}(t) = \psi_{\SL_2(\Z)}(t; f) \ll_{\D} \psi_{\SL_2(\Z)}(t; \mathbbm{1}_{\SL_2(\Z/\disc(\D)\Z)}) = \psi_{\SL_2(\Z)}(t).
\end{equation*}
Note that 
\begin{equation*}
    \psi_{\SL_2(\Z)}(t) \leq \Psi_{\SL_2(\Z)}(N(t)) - \Psi_{\SL_2(\Z)}(N(t)- y),
\end{equation*}
for any $y > 0$. Appealing to \cite[Lemma 4]{Iwaniec1984} and recalling that $N(t) \asymp t^2$ we deduce that 
\begin{equation*}
    \psi_{\SL_2(\Z)}(t) \ll N(t)^{1/2}(\log N(t))^3 \ll_\varepsilon t^{1 + \varepsilon},
\end{equation*}
as desired.
\end{proof}

\begin{proof}[Proof of Proposition \ref{prop:almoststabilityclassical}]

Let $\varphi$ be a class function on $(\O/q\O)^1$ and let $\gamma \in (\O/q\O)^\times$. 
Let $\Primes(q)$ be the set of primes dividing $q$. For any $t > 2$, let $E_t := \Q[X]/(X^2 - tX + 1)$. By Corollary \ref{cor:almoststabilityclassical} we know that, unless  $\Ram(E_t) \subseteq \Primes(q)$, we have $\psi_{\O^1}(t; \varphi^\gamma) = \psi_{\O^1}(t;\varphi)$. Thus, we deduce that 
\begin{equation}\label{auxeq:proofofmaintheorem}
\begin{aligned}
    \Psi_{\O^1}(X; \varphi^\gamma) & = \Psi_{\O^1}(X; \varphi) + \sum_{\substack{t > 2\\
    N(t) \leq X\\ \Ram(E_t) \subseteq \Primes(q)}}\psi_{\O^1}(t; \varphi^\gamma - \varphi)\\\
    & = \Psi_{\O^1}(X; \varphi) +  O_\varphi\left(\sum_{\substack{t > 2\\
    N(t) \leq X\\ \Ram(E_t) \subseteq \Primes(q)}} \psi_{\O^1}(t)\right).
\end{aligned}
\end{equation}
Let $\E(q)$ denote a set of representatives of the isomorphism classes of real quadratic fields that ramify only at primes dividing $q$. Note that $|\E(q)| < \infty$. If $E \in \E(q)$, then by Dirichlet's unit theorem we know that
\begin{equation*}
    \o_E^1 = \{\pm \delta_E^n : n \in \Z\},
\end{equation*}  
where $\delta_E$ is a fundamental unit of $E$. Let $T_E := \{\trd(x): x \in \o_E^1\} \cap (2, \infty)$. Note that 
\begin{equation*}
  |\trd(\delta_E^n)| \asymp_E |\trd(\delta_E)|^n,
\end{equation*}
so that $T_E$ is as sparse in $(2, \infty)$ as a geometric progression. Thus, we can bound the error term in \eqref{auxeq:proofofmaintheorem} by   
\begin{equation*}
\sum_{\substack{t > 2\\
    N(t) \leq X\\ \Ram(E_t) \subseteq \Primes(q)}} \psi_{\O^1}(t) = \sum_{E \in \E(q)}\sum_{\substack{t > 2\\
    t \in T_E\\ N(t) \leq X}} \psi_{\O^1}(t) \ll_{q, \D, \varepsilon} X^{1/2 + \varepsilon},
\end{equation*}
where we have used Lemma \ref{lem:basicboundorbitalintegral}, the asymptotic $N(t) \sim t^2$, the sparsity of $T_E$ and the finiteness of $\mathcal{E}(q)$.
\end{proof}

\printbibliography

\footnotesize
\textit{Email address}: \, \texttt{alberto.reche.23@ucl.ac.uk}\par\nopagebreak
\textsc{Department of Mathematics, University College London, 25 Gordon Street, London WC1H 0AY, United Kingdom}

\end{document}